\newtheorem{theorem}{Theorem}[section]
\newtheorem{lemma}{Lemma}[section]
\newtheorem{follow}{Corollary}[section]
\newtheorem{pr}{Proposition}[section]
\newtheorem{remark}[theorem]{Remark}
\newcommand{\bel}{\begin{equation} \label}
\newcommand{\ee}{\end{equation}}
\newcommand{\ba}{{\bf a}}
\newcommand{\bb}{{\bf b}}
\newcommand{\bx}{{\bf x}}
\newcommand{\by}{{\bf y}}
\newcommand{\bk}{{\bf k}}
\newcommand{\bK}{{\bf K}}
\newcommand{\bR}{{\bf R}}
\newcommand{\re}{{\mathbb R}}
\newcommand{\C}{{\mathbb C}}
\def\beq{\begin{equation}}
\def\eeq{\end{equation}}
\newcommand{\bea}{\begin{eqnarray}}
\newcommand{\eea}{\end{eqnarray}}
\newcommand{\beas}{\begin{eqnarray*}}
\newcommand{\eeas}{\end{eqnarray*}}
\newcommand{\Pre}[1]{\ensuremath{\mathrm{Re} \left( #1 \right)}}
\newcommand{\Pim}[1]{\ensuremath{\mathrm{Im} \left( #1 \right)}}
{

\newcommand{\cS}{{\cal S}}
\newcommand{\cB}{{\cal B}}

\newcommand{\cF}{{\cal F}}
\newcommand{\cH}{{\cal H}}
\newcommand{\cK}{{\cal K}}
\newcommand{\cL}{{\cal L}}
\newcommand{\cU}{{\cal U}}

\begin{document}
\begin{center}
{\LARGE \bf On the guided states of \\ 3D biperiodic Schr\"odinger operators}

\medskip

\end{center}

\medskip

\begin{center}
{\sc F. Bentosela$^{\rm a, b}$, C. Bourrely$^{\rm c}$, Y. Dermenjian$^{\rm d}$, E. Soccorsi$^{\rm a, b}$}
\end{center}
$^{\rm a}${\em{Aix Marseille Universit\'e, CNRS, CPT, UMR 7332, 13288 Marseille, France}};
$^{\rm b}${\em{Universit\'e de Toulon, CNRS, CPT, UMR 7332, 83957 La Garde, France}};
$^{\rm c}${\em{Aix Marseille Universit\'e, D\'epartement de physique, Facult\'e des sciences de Luminy, 13288 Marseille, France}};
$^{\rm d}${\em{Aix Marseille Universit\'e, CNRS, LATP, UMR 7353, 13453 Marseille, France}}.

\begin{abstract}
We consider the Laplacian operator $H_0=-\Delta$ perturbed by a non-positive potential $V$, which is periodic in two directions, and decays in the remaining one. We are interested in the characterization and decay properties of the guided states, defined as the eigenfunctions of the reduced operators in the Bloch-Floquet-Gelfand transform of $H_0+V$ in the periodic variables. If $V$ is sufficiently small and decreases fast enough in the infinite direction, we prove that, generically, these guided states are characterized by quasi-momenta belonging to some one-dimensional compact real analytic submanifold of the Brillouin zone. Moreover they decay faster than any polynomial function in the infinite direction.\\

\noindent {\bf  AMS 2000 Mathematics Subject Classification:} 35J10, 35Q40, 81Q10.\\

\noindent {\bf  Keywords:} Schr\"odinger operator, Periodic potential, Guided state, Limiting absorption principle.\\

\end{abstract}


\section{Introduction}
\setcounter{equation}{0}
A better understanding of the mechanisms of radio wave propagation in complex environments is required for the development of efficient wireless communication systems. This can be achieved by investigating the corresponding Maxwell equations
where the source is a time periodic current density localized in the wires feeding the antenna,  $i(t,x) = {\rm e}^{- {\rm i} \omega t} i_0(x)$ , with frequency $\omega>0$. 
Of course these equations can not be solved explicitely in a rich scattering environment but they are the source of interesting mathematical problems. However there is only a very few number of theoretical results available describing radio waves in ``realistic" propagation media.
In the particular case of a periodic environment (for simplicity the medium is assumed to be periodic in, say, two out of the three directions, in order to model an idealized infinitely extended building) the problem is to determine the steady-state solutions $u(x,t) = {\rm e}^{- {\rm i} \omega t} u_0(x)$ associated to the solution $u_0$ to the equation $(H - k_0^2)u_0 = i_0$. Here $H$ denotes the selfadjoint Maxwell operator under study and $k_0= \omega \slash c$, where $c$ is the celerity of the propagation medium. If $i_0$ is nonzero and $k_0^2$ is a nonnegative number, it turns out that there is no $u_0$ satisfying the above equation in ${\rm L}^2(\re^3)$ in the general case. A potential outgoing solution can nevertheless be defined from the limit of the resolvent operator $(H - k_0^2 \mp  {\rm i} \varepsilon)^{-1} i_0$ as $\varepsilon \downarrow 0$. Similar problems have been studied in various contexts, in \cite{wilcox} for both acoustic and electromagnetic waves, in \cite{frank1},  \cite{frank2}, \cite{filonov1}-\cite{filErratum} and \cite{Ger} for the Schr\"odinger equation, and  in \cite{filonov2} for the Maxwell equations. 

The study carried out this article has some connection with \cite{bento1}, \cite{bento2} and \cite{bento3}. Actually, the model we had in mind when starting this work was inspired from \cite{bourrely}, where numerous numerical simulations of the scattering of electromagnetic waves in an infinitely extended and periodic building in two orthogonal directions are performed. However, in order to avoid numerous technical difficulties appearing in the treatment of the Maxwell operator, we consider in this paper the Schr\"odinger equation rather than the Maxwell equations.
 
The method used in \cite{frank1}, \cite{frank2}, \cite{filonov1}-\cite{filErratum}, \cite{filonov2}, \cite{Ger}
and \cite{wilcox} takes advantage of the periodicity of the system to decompose the corresponding partial differential operator $H$ into a direct integral $\int_{\cB} H(\bk_{\ell}) {\rm d} \bk_{\ell}$, where $\cB$ is the Brillouin zone and $\bk_{\ell}$ denotes the quasi-momentum associated to the Bloch-Floquet-Gelfand transform. In all the above mentioned papers, a Limiting Absorption Principle (LAP for short) for either $H$ or $H(\bk_{\ell})$, $\bk_{\ell} \in \cB$, is established in order to derive absolutely continuous properties of the spectrum. Convenient assumptions made on the periodic potential function $V$ describing the physical properties of the model under study then guarantee that the operator $\Gamma(\bk_{\ell},E \pm {\rm i} \varepsilon)=V(H(\bk_{\ell}) - E \mp {\rm i} \varepsilon)^{-1}$ is compact for all $\bk_{\ell} \in \cB$ and
$\varepsilon >0$. Here $E$ is $k_0^2$ in the case of the Maxwell equation while $E$ denotes the energy of the system for the Schr\"odinger equation. The next step
of the proof involves relating the LAP to the existence of  $\lim_{\varepsilon \downarrow 0}\int_{\cB}  \frac{f(E \pm {\rm i} \varepsilon,\bk_{\ell})}{h(E \pm {\rm i} \varepsilon,\bk_{\ell})} {\rm d} \bk_{\ell}$ for suitable functions $f$, where $h(E \pm {\rm i} \varepsilon,\bk_{\ell})$ denotes the Fredholm determinant of $\Gamma(\bk_{\ell},E \pm {\rm i} \varepsilon)$. This indicates that the derivation of a LAP for these models is intricately tied to the characterization of $\mathcal{C}(E)=\{ \bk_{\ell} \in \cB,\ h(\bk_{\ell},E)=0 \}$ (the so-called Fermi variety in \cite{Kuc}, by analogy to a similar -but slightly different- quantity in solid state physics)
which is generally a rather non trivial problem. The alternative to investigating the structure of $\mathcal{C}(E)$ in the process of establishing a LAP,  is to assume exponential decay for $V$ in the transversal (i.e. non periodic) direction, see \cite{filonov1}-\cite{filErratum}, \cite{filonov2}, \cite{Ger}. Furthermore the resolvent can be extended to a meromorphic function on the half lower or upper planes in this framework.
In this paper we develop a completely different approach based on the actual characterization of $\mathcal{C}(E)$, which turns out to be a real analytic manifold. The key advantage is that $\mathcal{C}(E)$ is thus parametrizable.
Based on standard computations, 
involving an appropriate change of variables in the above mentioned integrals when $\bk_{\ell}$ is in a neighborhood of $\mathcal{C}(E)$, this would allow us to derive a LAP for external potentials decreasing polynomially fast in the transversal direction. For the sake of brevity we shall nevertheless not mention the details of the related proof here. 

Another benefit of determining the underlying structure of $\mathcal{C}(E)$ is a better characterization of guided states occuring in this system, i.e. of the eigenfunctions of $H(\bk_{\ell})$ with energy $E$, for some $\bk_{\ell} \in \cB$. Indeed we prove in this framework that guided states correspond to quasi-momenta $\bk_{\ell}$ satisfying either $\bk_{\ell} \in \mathcal{C}(E)$ or $| \bk_{\ell} | = E^{1 \slash 2}$.
The terminology used here is justified by the fact that these eigenfunctions exhibit decay properties in the transversal direction. This will be made precise for both $\bk_{\ell} \in {\mathcal C}(E)$ and $| \bk_{\ell} | = E^{1 \slash 2}$, although the existence of guided states is only guaranteed for $\bk_{\ell} \in {\mathcal C}(E)$ in the general framework examined in this article. Nevertheless, the non existence of guided states associated to $| \bk_{\ell} | = E^{1 \slash 2}$ can be proven for a wide class of suitable periodic potential functions $V$ we shall fully describe. In this specific case, the guided states of the corresponding Hamiltonian are therefore characterized by quasi-momenta $\bk_{\ell} \in \mathcal{C}(E)$.
Notice that the terminology used in both mathematics and physics literature to classify waves depends quite strongly on the authors and the scientific communities they belong to. As a matter of fact the term ``surface state" is employed in \cite{bento1}, \cite{bento3} and \cite{wilcox}, while ``guided wave" is used in \cite{wilcox2}, and \cite{auld} refers to both ``surface" and ``guided" waves, depending upon the context.

\subsection{Settings and notations}
Let $\ba_2$, $\ba_3$ be two independent vectors in $\re^2$ generating a lattice
$\cL := \sum_{j=2,3} {\mathbb Z} \ba_j$.
We define the basic period cell (or Seitz zone) as
\bel{s1}
\cS := \re^2 \slash \cL = \left\{ \bx_{\ell} \in \re^2,\ \bx_{\ell}=\sum_{j=2,3} s_j \ba_j,\ -1 \slash 2 < s_j  \leq 1 \slash 2 \right\}.
\ee
Similarly, the dual basis $\bb_2$, $\bb_3$ being defined by
\bel{s2}
\langle \bb_i, \ba_j \rangle = (2 \pi) \delta_{i,j},\ i,j=2,3,
\ee
where $\langle . , . \rangle$ stands for the usual Euclidian scalar product in $\re^2$, the basic period cell for the dual basis $\{ \bb_1,\bb_2 \}$ (or Brillouin zone) is
\bel{s3}
\cB := \left\{ \bk_{\ell} \in \re^2,\ \bk_{\ell} = \sum_{j=2,3} t_j \bb_j,\ -1 \slash 2 < t_j  \leq 1 \slash 2 \right\}.
\ee
Evidently $\cB = \re^2 \slash \cL^{\perp}$ where $\cL^{\perp} := \sum_{j=2,3} {\mathbb Z} \bb_j$ is the reciprocal lattice.

For the sake of simplicity we assume in what follows that
$$
\langle \ba_2 , \ba_3 \rangle = 0,
$$
and hence
\bel{r12}
\bb_j = 2 \pi\frac{\ba_j}{| \ba_j |^2},\ j=2,3.
\ee

Let $V$ be a real-valued function on $\re^3$ obeying
\bel{s5}
V(x_1,\bx_{\ell}+\ba_j)=V(x_1,\bx_{\ell}),\ x_1 \in \re,\ \bx_{\ell}=(x_2,x_3) \in \re^2,\ j=2,3,
\ee
plus some technical additional conditions that will be mentioned further.
We are interested in the spectral properties of the self-adjoint realization of the operator $-\Delta+V$ acting in ${\rm L}^2(\re^3)$.

For all $\bk_{\ell} \in \cB$ we consider the operator $H_0(\bk_{\ell}) :=-\Delta$ in
$\cH := {\rm L}^2(\re \times \cS)$, with the boundary conditions
\bel{s6}
\varphi(x_1,\bx_{\ell}+\ba_j)={\rm e}^{{\rm i} \langle \bk_{\ell}, \ba_j \rangle} \varphi(x_1,\bx_{\ell}),\
\partial_j \varphi(x_1,\bx_{\ell}+\ba_j)={\rm e}^{{\rm i} \langle \bk_{\ell}, \ba_j \rangle} \partial_j \varphi(x_1,\bx_{\ell}),\ j=2,3,
\ee
for all $x_1 \in \re$ and $\bx_{\ell}$, with $\bx_{\ell}$ and $\bx_{\ell}+\ba_j$ in $\partial \cS$, the notation $\partial_j$ being understood as the derivative w.r.t. the coordinate $s_j$ such that $\bx_{\ell}=\sum_{j=2,3} s_j \ba_j$. 
Its domain is 
\bel{s6b}
{\rm dom}\ H_0(\bk_{\ell}) =\{ \varphi \in {\rm H}^2(\re \times \cS)\ {\rm fulfilling}\ \eqref{s6} \}
\ee
where ${\rm H}^{2}(\re \times \cS)$ denotes the usual second order Sobolev space.\\
Let us now consider the Bloch-Floquet-Gelfand transform $\cU : {\rm L}^2(\re^3) \rightarrow \int_{\cB}^{\oplus} \cH {\rm d} \bk_{\ell}$, defined on $C_0^{\infty}(\re^3)$ by
\bel{s4}
(\cU \varphi)(\bk_{\ell},x_1,\bx_{\ell}) := | \cB |^{-1 \slash 2} \sum_{\bR_{\ell} \in \cL} {\rm e}^{-{\rm i} \langle \bk_{\ell}, \bR_{\ell} \rangle} \varphi(x_1,\bx_{\ell}+\bR_{\ell})
\ee
and extended as a unitary operator on ${\rm L}^2(\re^3)$.



In light of \eqref{s4} and the identity
$\cU {\rm L}^2(\re^3) = \int_{\cB}^{\oplus} \cH {\rm d} \bk_{\ell}$, it turns out that
\bel{s7}
\cU ( -\Delta+V ) \cU^{-1} = \int_{\cB}^{\oplus} H(\bk_{\ell}) {\rm d} \bk_{\ell},
\ee
where $H(\bk_{\ell}):= H_0(\bk_{\ell})+V$ acts in $\cH$.

Let $\rho(H_0(\bk_{\ell}))$ be the resolvent set of $H_0(\bk_{\ell})$, and $\sigma(H_0(\bk_{\ell}))$ denote its spectrum. 
For all $\bk_{\ell} \in \cB$, let $R_0(\bk_{\ell},z):=(H_0(\bk_{\ell})-z)^{-1}$, $z \in \rho(H_0(\bk_{\ell})):= {\mathbb C} \backslash \sigma(H_0(\bk_{\ell}))$, and $R(\bk_{\ell},z):=(H(\bk_{\ell})-z)^{-1}$, $z \in \rho(H(\bk_{\ell}))$, be the resolvent operators associated to
$H_0(\bk_{\ell})$ and $H(\bk_{\ell})$ respectively. For all $z \in \rho(H_0(\bk_{\ell})) \cap \rho(H(\bk_{\ell}))$ we have
\bel{r1}
| V |^{1 \slash 2} R_0(\bk_{\ell},z) = (1 + \epsilon | V |^{1 \slash 2} R_0(\bk_{\ell},z) | V |^{1 \slash 2} ) | V |^{1 \slash 2} R(\bk_{\ell},z),
\ee
by combining the first resolvent equation
$R(\bk_{\ell},z) = R_0(\bk_{\ell},z) - R_0(\bk_{\ell},z) V R(\bk_{\ell},z)$,
with the obvious decomposition $V=\epsilon | V |^{1 \slash 2} | V |^{1 \slash 2}$, $\epsilon(\bx)$ being the sign of $V(\bx)$, $\bx:=(x_1,\bx_{\ell}) \in \re \times \cS$. Henceforth
\bel{r2}
R(\bk_{\ell},z) = R_0(\bk_{\ell},z) -  R_0(\bk_{\ell},z) \epsilon | V |^{1 \slash 2} (1 + \epsilon | V |^{1 \slash 2} R_0(\bk_{\ell},z) | V |^{1 \slash 2} )^{-1} | V |^{1 \slash 2} R_0(\bk_{\ell},z),
\ee
provided $1 + \epsilon | V |^{1 \slash 2} R_0(\bk_{\ell},z) | V |^{1 \slash 2}$ is boundedly invertible.

For the sake of simplicity we assume that $V(\bx):=-g W(\bx)^2$ for $\bx \in \re \times \cS$,
some coupling constant $g \in (0,+\infty)$ and some potential $W(\bx) \geq 0$.
Hence we have
\bel{r3}
R_g(\bk_{\ell},z) := R(\bk_{\ell},z) = R_0(\bk_{\ell},z) +  g R_0(\bk_{\ell},z) W (1 - g \Gamma_{\bk_{\ell}}(z) )^{-1} W R_0(\bk_{\ell},z),
\ee
from \eqref{r2}, where
\bel{r4}
\Gamma_{\bk_{\ell}}(z) := W R_0(\bk_{\ell},z) W,
\ee
is the integral operator in $\cH$ with kernel (see \cite{Ger})
\bel{r5}
\gamma_{\bk_{\ell}}(\bx,\by,z) := {\rm i} \beta W(\bx) W(\by) \sum_{\bK_{\ell} \in \cL^{\perp}}
\frac{{\rm e}^{{\rm i} \sqrt{z-|\bK_{\ell}+\bk_{\ell}|^2} |x_1 - y_1|}}{\sqrt{z-|\bK_{\ell}+\bk_{\ell}|^2}} {\rm e}^{{\rm i} \langle \bK_{\ell}+\bk_{\ell}, \bx_{\ell}-\by_{\ell} \rangle},
\ee
with
\bel{r5a}
\beta := \frac{| \cB |}{16 \pi^2},
\ee
the sign of the imaginary part of the square root of any complex number being chosen nonnegative. In \eqref{r5} and in
what follows, $| \bk_{\ell} |$ denotes the Euclidian norm of $\bk_{\ell} \in \re^2$, i.e. $| \bk_{\ell} |^2 = \sum_{j=2,3} k_j^2$ for $\bk_{\ell}=(k_2,k_3)$.

\subsection{Statement of the main results}
In this paper we essentially aim for examining the existence of guided states generated by a perturbation of the form $-g W^2$ for $g>0$. Borrowing the usual definition from \cite{wilcox2}, a guided state for the energy  $E$ 
is any nonzero function $u \in \cH$ satisfying the identity $H(\bk_{\ell}) u = E u$ for some $\bk_{\ell} \in \cB$. 
As follows from \eqref{r3}, the problem of the existence of guided states in this system is thus tied to the one of the invertibility of the family of operators $\{ 1- g \Gamma_{\bk_{\ell}}(E) \}_{\bk_{\ell} \in \cB}$ in $B(\mathrm{L}^2(\re \times \cS))$.

As a matter of fact, we shall consider in this framework energies $E \in (0,E_{\delta})$, $\delta \geq 0$, where 
\bel{r13}
E_{\delta} :=  \frac{\pi^2}{1+\delta} \min_{j=2,3} | \ba_j |^{-2} = \frac{1}{4 (1+\delta)} \min_{j=2,3} | \bb_j |^{2}.
\ee

The main result of this article is the following actual characterization of the subset 
$$\mathcal{C}_g(E) := \{ \bk_{\ell} \in \cB,\ \vert \bk_{\ell} \vert^{2}\not=E\ {\rm and}\ 1-g \Gamma_{\bk_{\ell}}(E)\ {\rm is\ singular} \},$$
for $g$ sufficiently small.
\begin{theorem}
\label{thm-a}
Let $\delta > 0$, $E \in (0,E_{\delta})$ , 
and let  $W \in {\rm L}^{\infty}(\re \times \cS)$ satisfy
\bel{rr0}
W(x_1,\bx_{\ell})  =O \left( \frac{1}{(1 + |x_1|)^{(3  + \epsilon) \slash 2}} \right) \mbox{ for some } \epsilon>0.
\ee
Then there exists $g_0=g_0(\delta,W)>0$ such that for all $g \in (0,g_0)$, the set $\mathcal{C}_g(E)$ is a one-dimensional compact real analytic manifold contained in an annulus $\Omega_g(E)$ centered at the origin, with smaller and bigger radius of size $E^{1/2}+ c_{\pm}g^2$, the two constants $c_{\pm}>0$ being independent of $g$. Namely, $\mathcal{C}_g(E)$ is a closed and simple (i.e. with no double point) curve which is not homotopic to a point in $\Omega_g(E)$.
\end{theorem}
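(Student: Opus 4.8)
The plan is to exploit the fact that, for $E\in(0,E_{\delta})$ and $\bk_\ell\in\cB$, the gap condition forces $|\bK_\ell+\bk_\ell|^2>E$ for every $\bK_\ell\in\cL^{\perp}\setminus\{0\}$ (here $\delta>0$ provides a uniform margin $\delta E_{\delta}$), so that in the kernel \eqref{r5} only the term $\bK_\ell=0$ can be non-evanescent. Accordingly I would split $\Gamma_{\bk_\ell}(E)=\Gamma^{0}_{\bk_\ell}(E)+\Gamma^{\mathrm{ev}}_{\bk_\ell}(E)$, the first being the $\bK_\ell=0$ contribution and the second the sum over $\bK_\ell\neq0$. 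Using \eqref{rr0} one checks that $\Gamma^{\mathrm{ev}}_{\bk_\ell}(E)$ is a self-adjoint, non-negative, trace-class operator depending real-analytically on $\bk_\ell$ in a full neighbourhood of the circle $\{|\bk_\ell|=E^{1/2}\}$, with $\|\Gamma^{\mathrm{ev}}_{\bk_\ell}(E)\|$ bounded uniformly in $\bk_\ell$; hence $1-g\Gamma^{\mathrm{ev}}_{\bk_\ell}(E)$ is boundedly invertible and positive for $g$ small. The whole singular behaviour is therefore carried by $\Gamma^{0}_{\bk_\ell}(E)$, whose coefficient $\mathrm{i}\beta/\sqrt{E-|\bk_\ell|^2}$ degenerates as $|\bk_\ell|^2\to E$.

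Next I would localize $\mathcal{C}_g(E)$. For $|\bk_\ell|^2>E+\eta_0$ with $\eta_0>0$ fixed, every term is evanescent with a bounded coefficient, so $\|g\Gamma_{\bk_\ell}(E)\|=O(g)$ and $1-g\Gamma_{\bk_\ell}(E)$ is invertible once $g$ is small. For $|\bk_\ell|^2<E$ I would write $p:=\sqrt{E-|\bk_\ell|^2}>0$ and observe that the only part of $\Gamma^{0}_{\bk_\ell}(E)$ that blows up as $p\to0^{+}$ is the purely imaginary rank-one operator $(\mathrm{i}\beta/p)\,|\phi_{\bk_\ell}\rangle\langle\phi_{\bk_\ell}|$, where $\phi_{\bk_\ell}(\bx):=W(\bx)\mathrm{e}^{\mathrm{i}\langle\bk_\ell,\bx_\ell\rangle}$, the remainder staying uniformly bounded. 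If $g\Gamma_{\bk_\ell}(E)u=u$ for some $u\neq0$, taking imaginary parts of $\langle u,u\rangle=g\langle\Gamma_{\bk_\ell}(E)u,u\rangle$ forces $|\langle\phi_{\bk_\ell},u\rangle|=O(p)\|u\|$, and reinserting this in the eigenvalue equation yields $\|u\|=O(g)\|u\|$, a contradiction. Hence there is no singular point with $|\bk_\ell|^2\le E$, and $\mathcal{C}_g(E)\subset\{E<|\bk_\ell|^2<E+\eta_0\}$.

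On this outer annular region I set $\kappa:=\sqrt{|\bk_\ell|^2-E}>0$, a real-analytic function of $\bk_\ell$, and factor $1-g\Gamma_{\bk_\ell}(E)=(1-g\Gamma^{\mathrm{ev}}_{\bk_\ell}(E))\bigl(1-\tfrac{g\beta}{\kappa}C_{\bk_\ell}\bigr)$, where $C_{\bk_\ell}:=(1-g\Gamma^{\mathrm{ev}}_{\bk_\ell}(E))^{-1}B_{\bk_\ell}(\kappa)$ and $B_{\bk_\ell}(\kappa)$ is the non-negative operator with kernel $W(\bx)W(\by)\mathrm{e}^{-\kappa|x_1-y_1|}\mathrm{e}^{\mathrm{i}\langle\bk_\ell,\bx_\ell-\by_\ell\rangle}$. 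Since $B_{\bk_\ell}(\kappa)\to|\phi_{\bk_\ell}\rangle\langle\phi_{\bk_\ell}|$ (rank one) as $\kappa\to0$, $C_{\bk_\ell}$ is similar to the self-adjoint operator $\tilde{C}_{\bk_\ell}:=(1-g\Gamma^{\mathrm{ev}}_{\bk_\ell}(E))^{-1/2}B_{\bk_\ell}(\kappa)(1-g\Gamma^{\mathrm{ev}}_{\bk_\ell}(E))^{-1/2}$, which has a simple, positive, isolated top eigenvalue $\lambda(\bk_\ell)\ge\|W\|_{{\rm L}^2}^2(1-Cg)$ depending real-analytically on $\bk_\ell$ by Kato's analytic perturbation theory, all other eigenvalues being $O(\kappa)$. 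A Feshbach/Grushin (Lyapunov--Schmidt) reduction then shows that $1-g\Gamma_{\bk_\ell}(E)$ is singular if and only if $\kappa=g\beta\lambda(\bk_\ell)$, the small eigenvalues being unable to contribute for $g$ small.

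Finally I would solve this scalar real-analytic equation. Writing $\bk_\ell=\sqrt{E+\kappa^2}\,\omega(\theta)$ with $\omega(\theta)$ the unit vector of angle $\theta$, the map $\kappa\mapsto\kappa-g\beta\lambda(\bk_\ell)$ has derivative $1-O(g)>0$, is negative at $\kappa=0$ and positive at $\kappa=\sqrt{\eta_0}$; hence for each $\theta$ it has a unique root $\kappa=\kappa(\theta)=g\beta\lambda+O(g^{2})>0$, real-analytic and $2\pi$-periodic in $\theta$ by the analytic implicit function theorem. Setting $\rho(\theta):=\sqrt{E+\kappa(\theta)^2}$ and using that $\lambda$ is bounded away from $0$, one gets $E^{1/2}+c_-g^{2}\le\rho(\theta)\le E^{1/2}+c_+g^{2}$ for suitable $0<c_-<c_+$. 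The set $\mathcal{C}_g(E)=\{\rho(\theta)\omega(\theta):\theta\in[0,2\pi)\}$ is thus a compact one-dimensional real-analytic submanifold contained in the annulus $\Omega_g(E)$; being the polar graph of a positive function it is simple and closed, and since it encircles the origin once it is not homotopic to a point in $\Omega_g(E)$. The principal difficulty, to my mind, lies in the first and third paragraphs: setting up the correct trace-class analytic framework for the Birman--Schwinger operator $\Gamma_{\bk_\ell}(E)$ defined through the boundary value of the resolvent, so that the Fredholm determinant and the perturbed eigenvalue $\lambda$ are genuinely real-analytic, and cleanly isolating the single rank-one singular direction so that the problem collapses to the one scalar equation $\kappa=g\beta\lambda(\bk_\ell)$.
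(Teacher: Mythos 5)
Your proposal is correct in substance, but it follows a genuinely different route from the paper's, so let me compare. The paper works \emph{additively}: it writes $\Gamma_{\bk_{\ell}}(E)=\Lambda_{\bk_{\ell}}(E)P_{\bk_{\ell}}+C_{\bk_{\ell}}(E)$ with $\Lambda_{\bk_{\ell}}(E)={\rm i}\beta \slash p(\bk_{\ell},E)$ and $C_{\bk_{\ell}}(E)$ uniformly Hilbert--Schmidt bounded (note the different split: the paper puts the whole $x_1$-dependence of the $\bK_{\ell}=0$ mode into the remainder, keeping only the rank-one operator $\Lambda_{\bk_{\ell}}P_{\bk_{\ell}}$ as the singular part), localizes the singular set to the annulus $\Omega_g(E)$ and characterizes $\mathcal{C}_g(E)$ as the level set $\{g\lambda_1(\bk_{\ell},E)=1\}$, where $\lambda_1$ is the top --- divergent, of size $\beta\slash p_I(\bk_{\ell},E)$ --- eigenvalue of the full Birman--Schwinger operator (Proposition \ref{pr-spe}, Corollary \ref{cor-a1}). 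The analytical heart of the paper is then Proposition \ref{pr-a2}, a lower bound on $\sum_j|\partial\lambda_1\slash\partial k_j|$ proved via the Feynman--Hellmann formula and rather delicate estimates; this makes $\lambda_1$ a submersion, whence $\mathcal{C}_g(E)$ is an embedded analytic curve, nonempty by the intermediate value theorem (using the sign conditions of Proposition \ref{pr-spe}(c)) and non-contractible by an extremum argument. You instead factor \emph{multiplicatively}, $1-g\Gamma_{\bk_{\ell}}(E)=(1-g\Gamma^{\mathrm{ev}}_{\bk_{\ell}})(1-\tfrac{g\beta}{\kappa}C_{\bk_{\ell}})$, thereby renormalizing the $1\slash\kappa$ blow-up into a scalar prefactor, and reduce the singularity condition to the scalar equation $\kappa=g\beta\lambda(\bk_{\ell})$ with $\lambda\approx 1$ the top eigenvalue of a \emph{bounded} self-adjoint family; you solve it in polar coordinates by radial monotonicity ($\partial_{\kappa}(\kappa-g\beta\lambda)=1-O(g)>0$) and the analytic implicit function theorem. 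What your route buys: the paper's hard gradient estimate collapses to a trivial monotonicity bound, and $\mathcal{C}_g(E)$ comes out explicitly as a polar graph $\rho(\theta)\omega(\theta)$, from which connectedness, simplicity and winding number one are immediate --- the paper's submersion-plus-extremum argument yields an embedded curve whose components are non-contractible, but does not explicitly address why there is a single component. What the paper's route buys: the objects it constructs, the pair $(\lambda_1,P_1)$ of the full operator $\Gamma_{\bk_{\ell}}(E+{\rm i}\varepsilon)$ at complex energies, are reused throughout the rest of the paper (Corollary \ref{cor-a1}, the guided-state analysis of Theorem \ref{thm-sw}), whereas your renormalized $\lambda(\bk_{\ell})$ is tailored to this one theorem.

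Three small repairs to your write-up, none structural. First, $\Gamma^{\mathrm{ev}}_{\bk_{\ell}}(E)$ is Hilbert--Schmidt but \emph{not} trace class: the $\bK_{\ell}$-th term is a positive operator of trace $\beta\|W\|_{\cH}^2\slash p_I(\bk_{\ell}+\bK_{\ell},E)$, and $\sum_{\bK_{\ell}\neq 0}p_I(\bk_{\ell}+\bK_{\ell},E)^{-1}$ diverges over the two-dimensional dual lattice; this is harmless, since your argument only uses boundedness, self-adjointness, positivity and analyticity. Second, in the exclusion of $|\bk_{\ell}|^2<E$, taking imaginary parts in $\|u\|^2=g\langle\Gamma_{\bk_{\ell}}(E)u,u\rangle$ gives more than $|\langle\varphi_{\bk_{\ell}},u\rangle|=O(p)\|u\|$: since the skew part of the $\bK_{\ell}=0$ term is $\tfrac{\beta}{p}\left(\langle\cdot,\phi_c\rangle\phi_c+\langle\cdot,\phi_s\rangle\phi_s\right)$ with $\phi_c=W\cos(px_1){\rm e}^{{\rm i}\langle\bk_{\ell},\bx_{\ell}\rangle}$ and $\phi_s=W\sin(px_1){\rm e}^{{\rm i}\langle\bk_{\ell},\bx_{\ell}\rangle}$, one gets the \emph{exact} vanishing $\langle u,\phi_c\rangle=\langle u,\phi_s\rangle=0$; what remains of $\Gamma^{0}_{\bk_{\ell}}u$ is the operator with kernel $-\tfrac{\beta}{p}\sin(p|x_1-y_1|)W(\bx)W(\by){\rm e}^{{\rm i}\langle\bk_{\ell},\bx_{\ell}-\by_{\ell}\rangle}$, which is Hilbert--Schmidt uniformly in $p$ thanks to \eqref{rr0}, and the contradiction $\|u\|=O(g)\|u\|$ follows cleanly. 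Third, your assertion that the non-leading eigenvalues of $\tilde{C}_{\bk_{\ell}}$ are $O(\kappa)$ is exactly the point that kills the small eigenvalues, so make it explicit: it rests on $\|B_{\bk_{\ell}}(\kappa)-B_{\bk_{\ell}}(0)\|_{HS}=O(\kappa)$, which holds because $|{\rm e}^{-\kappa|x_1-y_1|}-1|\leq\kappa|x_1-y_1|$ and \eqref{rr0} makes $\|W\|_{\cH_1}$ finite; with only the weaker bound $O(g+\kappa)$ one could not rule out solutions of $\kappa=g\beta\mu_j$, $j\geq 2$.
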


Theorem \ref{thm-a} allows for a better characterization of guided states occuring in this system.
This can be seen from the coming result, which, first, establishes a link between $\mathcal{C}_g(E)$ and potential guided states, and, second, describes their rate of decay in the orthogonal direction to the periodic ``longitudinal" plane carrying $\bx_{\ell}$. Its statement 
actually involves the following family of ``weighted" subspaces of $\cH={\rm L}^2(\re \times \cS)$ :
$$\cH_{\tau} := \{ v \in \cH,\ (1+x_1^2)^{\tau \slash 2} v \in \cH \},\ \tau \geq 0.$$ 
Although these functional spaces are not the sharpest ones required for the following claim to hold, their use in this framework is justified by the fact that they allow for a more simple derivation of the result.

\begin{theorem}
\label{thm-sw}
Let $E$, $g_0$ and $W$ be the same as in Theorem \ref{thm-a}. 
Then for every $g \in (0,g_0)$, any guided state with energy $E$ belongs to $\cap_{m \in {\mathbb N}} \cH_{m}$, and is
associated to a quasi-momentum $\bk_{\ell}$ verifying either $\bk_{\ell} \in \mathcal{C}_g(E)$ or $| \bk_{\ell} | = E^{1 \slash 2}$.
Moreover,
for each $\bk_{\ell} \in \mathcal{C}_g(E)$, there exists at least one guided state with energy $E$ associated to $\bk_{\ell}$.
\end{theorem}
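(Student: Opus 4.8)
\emph{Plan.} Everything hinges on the spectral structure of the fibre operator $H_0(\bk_{\ell})$. Expanding any element of $\cH$ in the transverse modes $\phi_{\bK_{\ell}}(\bx_{\ell}) = | \cS |^{-1/2} {\rm e}^{{\rm i} \langle \bK_{\ell}+\bk_{\ell}, \bx_{\ell} \rangle}$, $\bK_{\ell} \in \cL^{\perp}$ (an orthonormal basis of ${\rm L}^2(\cS)$ compatible with \eqref{s6}, diagonalising the transverse Laplacian with eigenvalue $| \bK_{\ell}+\bk_{\ell} |^2$), one gets $\sigma(H_0(\bk_{\ell})) = [\,| \bk_{\ell} |^2,+\infty)$, the infimum being reached at $\bK_{\ell}=0$ since $\bk_{\ell} \in \cB$. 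Moreover $E < E_{\delta}$ guarantees through \eqref{r13} that $| \bK_{\ell}+\bk_{\ell} |^2 > E$ for all $\bK_{\ell} \neq 0$ whenever $| \bk_{\ell} |^2 \leq E$, so $\bK_{\ell}=0$ is the only channel that can fail to be evanescent. I would then split the analysis of a guided state $u$, $H(\bk_{\ell})u=Eu$, according to the sign of $E - | \bk_{\ell} |^2$. If $E < | \bk_{\ell} |^2$, then $E \in \rho(H_0(\bk_{\ell}))$ and the eigenvalue equation $(H_0(\bk_{\ell})-E)u = gW^2 u$ gives $u = g R_0(\bk_{\ell},E) W (Wu)$; setting $v:=Wu$ and applying $W$ yields $(1-g\Gamma_{\bk_{\ell}}(E))v=0$, while $v \neq 0$ (else $u = gR_0(\bk_{\ell},E)Wv = 0$). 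Hence $1-g\Gamma_{\bk_{\ell}}(E)$ is singular and $| \bk_{\ell} |^2 \neq E$, i.e. $\bk_{\ell} \in \mathcal{C}_g(E)$. The borderline $E = | \bk_{\ell} |^2$ is precisely the second alternative $| \bk_{\ell} | = E^{1/2}$.

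The remaining case $E > | \bk_{\ell} |^2$, with $E$ in the interior of $\sigma(H_0(\bk_{\ell}))$, must be excluded, since neither alternative of the statement covers it; I expect this to be the main obstacle. Projecting the equation onto the channels gives $-c_{\bK_{\ell}}'' + (| \bK_{\ell}+\bk_{\ell} |^2 - E)c_{\bK_{\ell}} = g\,[W^2 u]_{\bK_{\ell}}$ with a source decaying rapidly in $x_1$ (as $W = O((1+|x_1|)^{-(3+\epsilon)/2})$ and $u \in \cH$), and by the first paragraph only the open channel $\bK_{\ell}=0$, ruled by $-c_0'' - \mu^2 c_0 = g[W^2 u]_0$ with $\mu^2 = E - | \bk_{\ell} |^2 > 0$, is problematic. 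Note first that the ${\rm L}^2$ character of $u$ forces the propagating Fourier components of the source to vanish at $\pm \mu$, so that $u = gR_0(\bk_{\ell},E+{\rm i}0)Wv$ is genuinely square integrable and $v=Wu$ lies in $\ker(1-g\Gamma_{\bk_{\ell}}(E+{\rm i}0))$; since $\| \Gamma_{\bk_{\ell}}(E+{\rm i}0) \| = O(1/\mu)$, for $g$ small this kernel is trivial whenever $\mu$ stays bounded away from $0$, excluding embedded eigenvalues off a thin neighbourhood of the threshold circle. That remaining neighbourhood I would handle by a Rellich--Kato type uniqueness theorem for the cylinder $\re \times \cS$ with a short-range perturbation: an ${\rm L}^2$ solution forces the oscillatory asymptotics of $c_0$ to vanish, whence $u$ decays and must vanish identically by unique continuation. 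This uniqueness step is the crux.

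For the decay statement I would argue channel by channel and bootstrap. The closed channels ($| \bK_{\ell}+\bk_{\ell} |^2 > E$) are recovered from their rapidly decaying sources by inverting $-\partial_1^2 + (| \bK_{\ell}+\bk_{\ell} |^2 - E)$, whose Green function decays exponentially in $|x_1-y_1|$, so they inherit the polynomial decay of the source; the single open-or-threshold channel is recovered by inverting the corresponding constant-coefficient one-dimensional operator, which transfers the decay of the source up to a loss of at most two powers at exact threshold. Since multiplication by $W^2$ improves the decay exponent by $3+\epsilon$, each iteration of $u = g R_0(\bk_{\ell},E) W(Wu)$ (or its channel-wise analogue at the threshold) maps $\cH_{\tau}$ into $\cH_{\tau + 1 + \epsilon}$ in the worst case; starting from $u \in \cH = \cH_0$ and iterating yields $u \in \cap_{m \in {\mathbb N}} \cH_m$.

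Finally, for existence: if $\bk_{\ell} \in \mathcal{C}_g(E)$ then Theorem \ref{thm-a} places $\bk_{\ell}$ in the annulus $\Omega_g(E)$ of radii $E^{1/2}+c_{\pm}g^2 > E^{1/2}$, so $| \bk_{\ell} |^2 > E$ and $R_0(\bk_{\ell},E)$ is a bona fide bounded operator. Picking $v \neq 0$ with $(1-g\Gamma_{\bk_{\ell}}(E))v=0$ and setting $u:=g R_0(\bk_{\ell},E)Wv \in {\rm dom}\,H_0(\bk_{\ell})$, one has $Wu = g\Gamma_{\bk_{\ell}}(E)v = v \neq 0$, hence $u \neq 0$, and $(H_0(\bk_{\ell})-E)u = gWv = gW(Wu) = gW^2 u = -Vu$, that is $H(\bk_{\ell})u = Eu$. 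Thus $u$ is a guided state with energy $E$ associated to $\bk_{\ell}$, which by the previous paragraph also belongs to $\cap_{m \in {\mathbb N}} \cH_m$.
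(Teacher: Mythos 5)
Your treatment of the case $|\bk_{\ell}|^2 > E$, the existence argument for $\bk_{\ell} \in \mathcal{C}_g(E)$, and the bootstrap for the decay (closed channels via exponentially decaying Green functions, the open/threshold channel with a loss of two powers compensated by the gain $3+\epsilon$ from $W^2$) all track the paper's own proof (Lemmas \ref{lm-sw1}--\ref{lm-sw5}, via Lemma \ref{lm-nz} and Proposition \ref{pr-lapz}). The genuine gap is exactly where you place it: the embedded case $E > |\bk_{\ell}|^2$ with $E - |\bk_{\ell}|^2$ of order $g^2$ or smaller. There, your crude bound $\| \Gamma_{\bk_{\ell}}(E+{\rm i}0) \| = O(1/\mu)$ forces you outside a $g$-dependent neighbourhood of the circle, and inside that neighbourhood you only \emph{name} a ``Rellich--Kato type uniqueness theorem'' plus ``unique continuation'' without proving or citing one. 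As sketched, the inference does not go through: vanishing of the oscillatory asymptotics gives polynomial decay of the open-channel component, but decay of an eigenfunction does not imply it vanishes on an open set, so ordinary unique continuation is inapplicable; what would be needed is a strong unique-continuation-at-infinity / Carleman argument (Froese--Herbst type), which is a substantial piece of theory, not available off the shelf for the twisted cylinder $\re \times \cS$ with a merely bounded, polynomially decaying potential, and your sketch never uses the smallness of $g$, whereas the statement is only claimed for $g \in (0,g_0)$.

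The paper closes this case by a much softer observation that your norm estimate discards: on $\cB_E^-$ the divergent part of $\Gamma_{\bk_{\ell}}(E \pm {\rm i}0)$ is \emph{rank one and purely imaginary}. Writing $\Gamma_{\bk_{\ell}}(E\pm{\rm i}0) = \Lambda_{\bk_{\ell}} P_{\bk_{\ell}} + C_{\bk_{\ell}}$ as in \eqref{r8ag}, one has $\Lambda_{\bk_{\ell}} = {\rm i}\beta/p(\bk_{\ell},E)$ with $p(\bk_{\ell},E) = (E-|\bk_{\ell}|^2)^{1/2} > 0$ real, so ${\rm dist}(1,\sigma(g\Lambda_{\bk_{\ell}}P_{\bk_{\ell}})) = \min(1, |1-{\rm i}g\beta/p|) \geq 1$ \emph{no matter how small} $p$ is, while $\|g C_{\bk_{\ell}}\| \leq gc < 1/s$ uniformly by Lemma \ref{lm-r1}(c). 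Hence $1 \in \rho(g\Gamma_{\bk_{\ell}}(E\pm{\rm i}0))$ for every $\bk_{\ell} \in \cB_E^-$ (this is the content of Proposition \ref{pr-spe}(a) and Corollary \ref{cor-a1}); combined with Lemma \ref{lm-sw1} (the LAP in weighted spaces giving $u = gR_0(\bk_{\ell},E\pm{\rm i}0)W^2u$ and $Wu \neq 0$, which you assert by a Fourier argument but which the paper justifies via Proposition \ref{pr-lapz}), this rules out embedded guided states outright, with no uniqueness theorem needed. If you replace your near-threshold paragraph by this rank-one argument, your proof becomes complete and essentially coincides with the paper's.
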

Although the existence of guided states associated to quasi-momenta $\bk_{\ell}$ satisfying $| \bk_{\ell} | = E^{1 \slash 2}$ cannot be ruled out in the general situation examined in Theorem \ref{thm-sw}, this is no longer the case 
when the potential function $W$ is sufficiently smooth w.r.t. $\bx_{\ell}$, and vanishes in 
a half-space parallel to the longitudinal direction. This and Theorem \ref{thm-sw} entails the following: 
\begin{follow}
\label{cor-sw}
Let $E$ and $W$ be as in Theorem \ref{thm-a}, and assume moreover that :
\begin{enumerate}[(i)]
\item $x_{\ell} \mapsto W(x_1,\bx_{\ell}) \in {\rm C}^4(\cS)$ for a.e. $x_1 \in \re$, and vanishes in a neighborhood of the boundary $\partial \cS$;
\item $W(x_1,\bx_{\ell})=0$ for a.e. $(x_1,\bx_{\ell}) \in I \times \cS$, where $I$ is any unbounded subinterval of $\re$.
\end{enumerate}
Then there exists $\tilde{g}_0>0$ such that for all $g \in (0,\tilde{g}_0)$, the quasi-momentum $\bk_{\ell}$ of any guided state with energy $E$, belongs to $\mathcal{C}_g(E)$.
\end{follow}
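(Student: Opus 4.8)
The plan is to combine Theorem~\ref{thm-sw} with a direct exclusion of threshold quasi-momenta. Since Theorem~\ref{thm-sw} already confines the quasi-momentum of a guided state to $\mathcal{C}_g(E)\cup\{\bk_{\ell}:|\bk_{\ell}|=E^{1/2}\}$, it suffices to prove that, under the additional hypotheses (i)--(ii) and for $g$ small, no nonzero $u\in\cH$ can satisfy $H(\bk_{\ell})u=Eu$ when $|\bk_{\ell}|=E^{1/2}$. I would fix such a candidate $u$ and expand it in the quasi-periodic Fourier basis $e_{\bK_{\ell}}(\bx_{\ell}):=|\cS|^{-1/2}{\rm e}^{{\rm i}\langle\bK_{\ell}+\bk_{\ell},\bx_{\ell}\rangle}$, $\bK_{\ell}\in\cL^{\perp}$, writing $u=\sum_{\bK_{\ell}}u_{\bK_{\ell}}(x_1)e_{\bK_{\ell}}$. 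By \eqref{r13}, the condition $E<E_{\delta}$ forces $|\bK_{\ell}+\bk_{\ell}|^2>E$ for every $\bK_{\ell}\neq0$, so the only mode sitting exactly at the threshold of $\sigma(H_0(\bk_{\ell}))$ is the one indexed by $\bK_{\ell}=0$ --- equivalently, the $\bK_{\ell}=0$ summand in \eqref{r5} is the single term that develops the non-integrable singularity $(\sqrt{z-|\bk_{\ell}|^2})^{-1}$ as $z\to E$. This is precisely the degeneracy that prevents $\mathcal{C}_g(E)$ from reaching $|\bk_{\ell}|=E^{1/2}$, and which must now be disposed of by hand.

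Projecting the eigenvalue equation mode by mode, the transverse part $Qu:=\sum_{\bK_{\ell}\neq0}u_{\bK_{\ell}}e_{\bK_{\ell}}$ solves $(H_0(\bk_{\ell})-E)Qu=gQW^2u$, and since each $|\bK_{\ell}+\bk_{\ell}|^2-E$ is bounded below by a positive constant depending only on $\delta$, the reduced resolvent $((H_0(\bk_{\ell})-E)|_{{\rm Ran}\,Q})^{-1}$ is bounded, uniformly over the compact threshold sphere; hence $Qu=g\,\widetilde{R}_0\,QW^2u$ with $\|\widetilde{R}_0\|$ independent of $g$. The genuine difficulty is concentrated in the threshold mode $u_0$, which obeys the degenerate equation $-u_0''=gF_0$ on $\re$, where $F_0(x_1):=\langle e_0,W^2u(x_1,\cdot)\rangle_{{\rm L}^2(\cS)}$ couples back to all modes and where $-\partial_1^2$ is \emph{not} invertible on ${\rm L}^2(\re)$.

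Here assumption (ii) enters decisively. On the unbounded interval $I$ one has $W=0$, so $F_0$ vanishes there and $u_0$ is affine on $I$; being a component of $u\in\cH$ it lies in ${\rm L}^2(I)$, and the only affine ${\rm L}^2$ function on an unbounded interval is $0$. Thus $u_0\equiv0$ on $I$, which fixes the Cauchy data of $u_0$ at the endpoint of $I$ and, because $u_0\in{\rm L}^2(\re)$ and $u_0''\in{\rm L}^1(\re)$, forces the two solvability conditions $\int_{\re}F_0=\int_{\re}x_1F_0(x_1)\,{\rm d}x_1=0$. Under these constraints the unique ${\rm L}^2$ solution is $u_0(x_1)=\tfrac{g}{2}\int_{\re}|x_1-t|F_0(t)\,{\rm d}t$, an expression that is supported on the side where $W\neq0$ and that I would estimate in the weighted spaces $\cH_m$. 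At this point I invoke Theorem~\ref{thm-sw}, which guarantees $u\in\cap_m\cH_m$, together with the decay \eqref{rr0} of $W$ and the ${\rm C}^4$-regularity of (i): the first two control the $x_1$-decay of $F_0$ so that the double integral remains in a weighted ${\rm L}^2$ space, while the smoothness yields rapid decay of the transverse Fourier coefficients of $W^2u$ in $\bK_{\ell}$, making the reconstruction $u=g\mathcal{K}u$ well defined with $\mathcal{K}$ bounded in a suitable weighted norm, uniformly in $g$.

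Granting the uniform bound $\|\mathcal{K}\|\le C(\delta,W)$, I would then choose $\tilde{g}_0\le g_0$ so small that $C(\delta,W)\,\tilde{g}_0<1$; the identity $u=g\mathcal{K}u$ gives $\|u\|\le g\,C(\delta,W)\|u\|$ in the weighted norm, whence $u=0$. This excludes all threshold guided states for $g\in(0,\tilde{g}_0)$ and, with Theorem~\ref{thm-sw}, shows that every guided state has quasi-momentum in $\mathcal{C}_g(E)$. The main obstacle is exactly the threshold sector: because $-\partial_1^2$ has no ${\rm L}^2$ inverse, the Neumann-series argument that disposes of the transverse modes fails for $u_0$, and one must instead use the support hypothesis (ii) to localize $u_0$ and generate the two solvability conditions, and then prove that the constrained double-integration operator is bounded in a weighted norm with a constant independent of $g$ and of the threshold point $\bk_{\ell}$. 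The delicate quantitative point is the loss of decay under double integration, where the interplay between the chosen weight exponent, the decay rate \eqref{rr0} and the smoothness (i) has to be tracked carefully.
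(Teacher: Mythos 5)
Your proposal is correct, and it takes a genuinely different route from the paper's. The paper proves the threshold exclusion (its Lemma \ref{lm-sw6}) by an energy argument: hypothesis (ii) is used to show that the zero mode $u_{\bk_{\ell}}$ is affine, hence null, on $I$ and then on all of $\re$; hypothesis (i) is used to prove the convolution bound \eqref{fo6} through an $\ell^1$-estimate of the lattice Fourier coefficients of $W^2$ (integration by parts against $(1-\Delta_{\ell})^2$); then multiplying \eqref{fo3} by $\overline{u_{\bk_{\ell}+\bK_{\ell}}}$, integrating, and summing over $\bK_{\ell}\neq 0$ gives $u=0$ as soon as $g<\delta E_{\delta}\slash c(W,\cS)$. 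You instead invert the degenerate zero-mode operator explicitly and run a contraction $u=g\mathcal{K}u$. Your scheme does close, and in fact more easily than you anticipate: writing $w(t):=\|u(t,\cdot)\|_{{\rm L}^2(\cS)}$, assumption \eqref{rr0} gives $|F_0(t)|\leq \|W(t,\cdot)\|_{{\rm L}^{\infty}(\cS)}^2\, w(t)=O\bigl((1+|t|)^{-(3+\epsilon)}\bigr)w(t)$, so $F_0\in{\rm L}^1(\re)$ with finite first moment; then $u_0\in{\rm L}^2(\re)$ and $u_0''=-gF_0\in{\rm L}^1(\re)$ already force $u_0'(\pm\infty)=u_0(\pm\infty)=0$, hence both moment conditions and the one-sided representations $u_0(x)=-g\int_x^{+\infty}(t-x)F_0(t)\,{\rm d}t=-g\int_{-\infty}^{x}(x-t)F_0(t)\,{\rm d}t$ (note the sign: the fundamental solution of $-{\rm d}^2\slash{\rm d}x^2$ is $-|x|\slash 2$, so your formula should carry a minus sign); Cauchy-Schwarz then yields $|u_0(x)|\leq C g (1+|x|)^{-(3+\epsilon)\slash 2}\|u\|_{\cH}$, i.e. $\|u_0\|_{{\rm L}^2(\re)}\leq C' g\|u\|_{\cH}$, while on the transverse block Parseval gives $\sum_{\bK_{\ell}\neq 0}\|(W^2u)_{\bk_{\ell}+\bK_{\ell}}\|^2\leq \|W\|_{{\rm L}^{\infty}}^4\|u\|_{\cH}^2$, so the coercivity $\delta E_{\delta}$ of \eqref{r6b} suffices. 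Two consequences are worth noting. First, hypothesis (ii) is not what produces your solvability conditions (they follow from square integrability and \eqref{rr0} alone, as your own derivation shows), and neither (i), nor the rapid decay supplied by Theorem \ref{thm-sw}, nor any weighted space beyond $\cH$ itself is needed to bound $\mathcal{K}$; carried out as above, your argument excludes threshold guided states under the hypotheses of Theorem \ref{thm-a} alone, which is stronger than Corollary \ref{cor-sw}. Second, what the paper's route buys is an explicit elementary smallness condition and no discussion of the degenerate inverse; what yours buys is generality and a more robust treatment of the zero mode --- indeed the paper's passage from $u_{\bk_{\ell}}=0$ on $I$ to $u_{\bk_{\ell}}=0$ on $\re$ (via ``Cauchy's theorem'') is delicate precisely because the zero-mode ODE carries the inhomogeneity $g(W^2u)_{\bk_{\ell}}$, which does not vanish off $I$; your representation formula bypasses that difficulty entirely.
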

Actually, this result can be generalized, at the expense of greater technical difficulties, to a wider class of potentials $W$ than the one considered in this statement.\\
Notice that the generality of the results of Theorems \ref{thm-a}-\ref{thm-sw} and Corollary \ref{cor-sw} is not substantially restricted by the assumption $E \in (0,E_{\delta})$. Indeed this purely technical hypothesis allows for a clearer statement (see Remark \ref{rmk-ae}) and it can be checked from the following sections of this paper that these claims, subject to slight modifications, remain essentially true when dealing with $E$ outside $(0,E_{\delta})$.

\subsection{Outline}

The contents of this paper is as follows.
In Section \ref{sec-gen}, we collect basic properties of the Hilbert-Schmidt operators $\Gamma_{\bk_{\ell}}(E + {\rm i} \varepsilon)$, for $\bk_{\ell} \in \mathcal{B}\setminus\{\vert \bk_{\ell} \vert^2 =E \}$ and $\varepsilon \in \re$, needed in the proofs of the following sections, and explain the link between $\Omega_{g}(E)$ and the problem of the invertibility
of $1-g \Gamma_{\bk_{\ell}}(E)$. Section \ref{sec-spe} is devoted to the study of the invertibility of $1- g \Gamma_{\bk_{\ell}}(E)$ for sufficiently small $g>0$, and contains the proof of Theorem \ref{thm-a}. Section \ref{sec-sw} deals with the characterization of guided states occuring in this framework, and their decay properties in the direction orthogonal to $\bx_{\ell}$. It provides the proofs of Theorem
\ref{thm-sw} and Corollary \ref{cor-sw}.
The paper concludes with two appendices in Section \ref{sec-app}. The first appendix, Appendix A, contains the  proof of the Hilbert-Schmidt properties of the operators $\Gamma_{\bk_{\ell}}(E \pm {\rm i} \varepsilon)$, stated in Lemma \ref{lm-r1}. The second appendix, Appendix B, is dedicated to the derivation of a LAP for $H_0(\bk_{\ell},E \pm {\rm i} \varepsilon)$, $\varepsilon >0$ and $\bk_{\ell} \in \cB$, needed in the proof of Theorem \ref{thm-sw}. 


\section{Basic properties of $\Gamma_{\bk_{\ell}}(E+{\rm i} \varepsilon)$, $\bk_{\ell} \in \cB_E^{\pm}$, $\varepsilon \in \re$}
\setcounter{equation}{0}
\label{sec-gen}
We set 
\bel{r6a}
\cB_E^{\pm} := \{ \bk_{\ell} \in \cB,\ \pm ( | \bk_{\ell} |^2 - E ) >0 \},\ \cB_E := \cB_E^{-} \cup \cB_E^{+},
\ee
and we assume in the foregoing, without restricting the generality of this text, that 
\bel{r2b}
\| W \|_{\cH} = 1.
\ee

\subsection{Notations and summability result}
For all $\bk_{\ell}=\sum_{j=2,3} t_j \bb_j \in \cB$ and $\bK_{\ell}= \sum_{j=2,3} n_j \bb_j \in
\cL^{\perp}$, \eqref{s3}-\eqref{r12} entails
\bel{z1}
| \bk_{\ell} + \bK_{\ell} |^2 = \sum_{j=2,3} (n_j + t_j)^2 | \bb_j |^2,\ t_j \in (-1 \slash 2,1 \slash 2],\
n_j \in {\mathbb Z},\ j=2,3.
\ee
Thus for all $\delta \geq 0$ and $E \in (0,E_{\delta})$ we have
\bel{r6b}
E - | \bk_{\ell} + \bK_{\ell} |^2 < -\delta E_{\delta} ,\ \bk_{\ell} \in \cB,\ \bK_{\ell} \in \cL^{\perp} \backslash \{ 0\},
\ee
according to \eqref{r13}.
Further, in light of \eqref{r5} we define for all $\bk_{\ell} \in \cB_E^{\pm}$, $\bK_{\ell} \in \cL^{\perp}$ and $\varepsilon \in \re$,
\bea
p(\bk_{\ell}+\bK_{\ell},E+{\rm i} \varepsilon) & := & (E - | \bk_{\ell} + \bK_{\ell} |^2 + {\rm i} \varepsilon)^{1 \slash 2} \nonumber \\
& = & p_R(\bk_{\ell}+\bK_{\ell},E+{\rm i} \varepsilon) + {\rm i} p_I(\bk_{\ell}+\bK_{\ell},E+{\rm i} \varepsilon), \label{r6cg3}
\eea
where
\bel{r6cg4}
(p_R(\bk_{\ell}+\bK_{\ell},E+{\rm i} \varepsilon),  p_I(\bk_{\ell}+\bK_{\ell},E+{\rm i} \varepsilon) ) \in \re \times \re^+,
\ee
in such a way that
\bea
& & \gamma_{\bk_{\ell}}(\bx,\by,E+{\rm i} \varepsilon) \nonumber \\
& = & \beta W(\bx) W(\by) \sum_{\bK_{\ell} \in \cL^{\perp}}
\frac{{\rm e}^{-p_I(\bk_{\ell}+\bK_{\ell},E+{\rm i} \varepsilon) |x_1 - y_1|}}{- {\rm i} p(\bk_{\ell}+\bK_{\ell},E+{\rm i} \varepsilon)}{\rm e}^{{\rm i} p_R(\bk_{\ell}+\bK_{\ell},E+{\rm i} \varepsilon) |x_1 - y_1|} {\rm e}^{{\rm i} \langle \bk_{\ell}+\bK_{\ell}, \bx_{\ell}-\by_{\ell} \rangle}. \label{r6cg}
\eea
Actually $(p_R(\bk_{\ell}+\bK_{\ell},E+{\rm i} \varepsilon),p_I(\bk_{\ell}+\bK_{\ell},E+{\rm i} \varepsilon))$ is uniquely defined from
\eqref{r6cg3}-\eqref{r6cg4} for every $(\bk_{\ell},\bK_{\ell},\varepsilon) \notin \cB_E^- \times \{0 \} \times \{0 \}$. Indeed, since ${\rm arg}\ \sqrt{z}$ is taken in $[0,\pi)$ for any complex number $z=|z| {\rm e}^{{\rm i} {\rm arg}\ z}$ with ${\rm arg}\ z \in [0,2 \pi)$, we get, from \eqref{r6b}-\eqref{r6cg3},
\bel{rr1}
p_I(\bk_{\ell}+\bK_{\ell},E+{\rm i} \varepsilon)=\left( \frac{((E - | \bk_{\ell} + \bK_{\ell} |^2)^2 + \varepsilon^2)^{1 \slash 2}-(E - | \bk_{\ell} + \bK_{\ell} |^2)}{2} \right)^{1 \slash 2}, 
\ee
for every $(\bk_{\ell},\bK_{\ell},\varepsilon) \in \cB_E^{\pm} \times \cL^{\perp} \times \re$. This and \eqref{r6b} yields $p_I(\bk_{\ell}+\bK_{\ell},E+{\rm i} \varepsilon)>0$ for $(\bk_{\ell},\bK_{\ell},\varepsilon) \notin  \cB_E^- \times \{0 \} \times \{0 \}$, and hence
\bel{rr2}
p_R(\bk_{\ell}+\bK_{\ell},E+{\rm i} \varepsilon) = \frac{\varepsilon}{2 p_I(\bk_{\ell}+\bK_{\ell},E+{\rm i} \varepsilon)},\ (\bk_{\ell},\bK_{\ell},\varepsilon) \in (\cB_E^{\pm} \times \cL^{\perp} \times \re ) \backslash ( \cB_E^- \times \{0 \} \times \{0 \}).
\ee
However, if $(\bk_{\ell},\bK_{\ell},\varepsilon) \in \cB_E^{-} \times \{ 0 \} \times \{0 \}$, we have $p_I(\bk_{\ell},E)=0$ from \eqref{rr1}, which shows that $p_R(\bk_{\ell},E)$ cannot be defined by \eqref{rr2}. In this case we set
\bel{rr3}
p(\bk_{\ell},E) = p_R(\bk_{\ell},E) := (E - | \bk_{\ell} |^2)^{1 \slash 2} >0,\ \bk_{\ell} \in \cB_E^{-}.
\ee
Notice from \eqref{rr1}-\eqref{rr3} that we have:
\bel{rr3b}
\left\{ \begin{array}{lcl} \lim_{\varepsilon \downarrow 0} p(\bk_{\ell}+\bK_{\ell},E \pm {\rm i} \varepsilon) =p(\bk_{\ell}+\bK_{\ell},E) & {\rm if} &  (\bk_{\ell},\bK_{\ell}) \in (\cB_E^+ \times \cL^{\perp}) \cup (\cB_E^- \times \cL^{\perp} \backslash \{ 0 \}) \\
\lim_{\varepsilon \downarrow 0} p(\bk_{\ell},E \pm {\rm i} \varepsilon) = \pm p(\bk_{\ell},E) & {\rm if} & \bk_{\ell} \in \cB_E^-. \end{array} \right.
\ee
\begin{remark}
\label{rmk-ae}
The condition $E \in (0, E_{\delta})$  ensuring \eqref{r6b} for every $\bk_{\ell} \in \cB$  and $\bK_{\ell} \in \cL^{\perp} \backslash \{ 0 \}$, we have $p_I(\bk_{\ell}+\bK_{\ell},E)>0$ except for $\bK_{\ell}=0$ according to \eqref{rr1}.The picture is quite similar for $E \geq E_{\delta}$ in the sense that there exists only a finite set $\mathcal{K}_E$ of ``singular" values $\bK_ t \in \cL^{\perp}$ such that $E-|\bk_{\ell}+\bK_{\ell}|^2 \geq 0$ for some $\bk_{\ell} \in \cB$. This can be seen directly from \eqref{s3} and the identity $\cL^{\perp}=\sum_{j=2,3} {\mathbb Z} \bb_j$ through elementary computations. 
Therefore, the case $E \geq E_{\delta}$ can actually be handled in the same way as $E  \in (0, E_{\delta})$, at the expense of inessential greater technical difficulties, upon substituting $\mathcal{K}_E$ for $\{ 0 \}$  in the following computations\footnote{Such as the occurence of several connex components in $C_{g}(E)$.}. 
\end{remark}
For further reference we now establish the following
\begin{lemma}
\label{lm-r3}
Let $E \in (0,E_{\delta})$ for $\delta \geq 0$.
Then for every $\mu>2$ there exists a constant $\alpha_{\mu}(\delta) \in \re_+^*$, depending only on $\delta$ and $\mu$, such that we have
$$
\sum_{\bK_{\ell} \in \cL^{\perp} \backslash \{ 0 \}} \frac{1}{p_I(\bk_{\ell}+\bK_{\ell},E+{\rm i} \varepsilon)^{\mu}} \leq
\sum_{\bK_{\ell} \in \cL^{\perp} \backslash \{ 0 \}} \frac{1}{p_I(\bk_{\ell}+\bK_{\ell},E)^{\mu}} \leq \alpha_{\mu}(\delta), $$
for all $\bk_{\ell} \in \cB_E$, $E \in (0,E_{\delta})$ and $\varepsilon \in \re$.
\end{lemma}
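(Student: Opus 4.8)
The plan is to prove the two inequalities separately, the right-hand one being the substantive part. \emph{The left inequality} is a termwise monotonicity statement. Fixing $\bK_{\ell} \in \cL^{\perp} \setminus \{0\}$ and abbreviating $a := E - | \bk_{\ell} + \bK_{\ell} |^2$, formula \eqref{rr1} reads $p_I(\bk_{\ell}+\bK_{\ell}, E + {\rm i}\varepsilon)^2 = \frac12\big((a^2 + \varepsilon^2)^{1/2} - a\big)$. Since $(a^2+\varepsilon^2)^{1/2} \ge |a|$ for every $\varepsilon \in \re$, this quantity is smallest at $\varepsilon = 0$, so $p_I(\bk_{\ell}+\bK_{\ell}, E+{\rm i}\varepsilon) \ge p_I(\bk_{\ell}+\bK_{\ell}, E) > 0$; raising to the power $\mu$, inverting, and summing over $\bK_{\ell} \ne 0$ gives the first inequality.

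\emph{The right inequality} is the main step. By \eqref{r6b} one has $a < 0$ for every $\bK_{\ell} \ne 0$, and the same formula at $\varepsilon = 0$ then collapses to $p_I(\bk_{\ell}+\bK_{\ell}, E)^2 = -a = | \bk_{\ell}+\bK_{\ell} |^2 - E$. It therefore suffices to bound $\sum_{\bK_{\ell} \ne 0}(| \bk_{\ell}+\bK_{\ell} |^2 - E)^{-\mu/2}$ uniformly in $\bk_{\ell} \in \cB$. Writing $\bk_{\ell} = \sum_j t_j \bb_j$ and $\bK_{\ell} = \sum_j n_j \bb_j$ with $t_j \in (-1/2, 1/2]$ and $(n_2,n_3) \in {\mathbb Z}^2 \setminus \{0\}$, and using the elementary bound $(n_j + t_j)^2 \ge n_j^2/4$ for $n_j \ne 0$ (since then $|n_j+t_j| \ge |n_j| - 1/2 \ge |n_j|/2$), identity \eqref{z1} yields, with $b^2 := \min_{j=2,3} | \bb_j |^2$ and $N := n_2^2 + n_3^2 \ge 1$,
\[ | \bk_{\ell}+\bK_{\ell} |^2 \ge \frac{b^2}{4}\,N . \]

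The crucial maneuver is to play this shift estimate against the strict spectral gap $E < E_{\delta} = \frac{b^2}{4(1+\delta)}$ furnished by \eqref{r13}, which gives
\[ | \bk_{\ell}+\bK_{\ell} |^2 - E > \frac{b^2}{4}\Big(N - \frac{1}{1+\delta}\Big) \ge \frac{b^2 \delta}{8(1+\delta)}\,(1+N), \]
the last inequality holding for every integer $N \ge 1$ (one checks that $N - (1+\delta)^{-1} - \frac{\delta}{2(1+\delta)}(1+N) = \frac{2+\delta}{2(1+\delta)}(N-1) \ge 0$). Hence $| \bk_{\ell}+\bK_{\ell} |^2 - E \ge c_{\delta}(1 + n_2^2 + n_3^2)$ with $c_{\delta} := \frac{b^2 \delta}{8(1+\delta)} > 0$, uniformly in $\bk_{\ell} \in \cB$, and therefore
\[ \sum_{\bK_{\ell} \ne 0}\frac{1}{p_I(\bk_{\ell}+\bK_{\ell}, E)^{\mu}} \le c_{\delta}^{-\mu/2}\sum_{(n_2,n_3)\ne 0}\frac{1}{(1 + n_2^2 + n_3^2)^{\mu/2}} =: \alpha_{\mu}(\delta) < \infty , \]
the final series converging exactly when $\mu > 2$ (comparison with $\int_{\re^2}(1+|x|^2)^{-\mu/2}\,{\rm d}x$ in polar coordinates). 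The constant $\alpha_{\mu}(\delta)$ depends only on $\mu$, $\delta$ and the fixed lattice.

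The main obstacle is precisely the lower bound above: one needs a gap that both grows like $n_2^2 + n_3^2$ (to secure summability for $\mu > 2$) and survives the worst case $N = 1$, where the shift estimate $| \bk_{\ell}+\bK_{\ell} |^2 \ge b^2/4$ only marginally exceeds $E$. It is exactly here that $\delta > 0$ enters. When $\delta = 0$ the gap at $N = 1$ can close (take $| \bb_2 | = b$, $n_2 = -1$, $t_2 = 1/2$), so the bound then holds only with a constant depending on the fixed $E \in (0,E_0)$, recovered by re-running the argument with any $\delta' > 0$ for which $E < E_{\delta'}$ (such $\delta'$ exists because $E < E_0$).
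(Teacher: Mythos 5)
Your proof is correct and follows essentially the same route as the paper's: the left inequality comes from the monotonicity of $p_I$ in $|\varepsilon|$ read off from \eqref{rr1} (this is the paper's estimate \eqref{r20}), and the right one from a lattice lower bound on $|\bk_{\ell}+\bK_{\ell}|^2-E$ played against $E<E_{\delta}$ (the paper uses $(|n_j|-1\slash 2)^2$ where you use $n_j^2\slash 4$, yielding its bound \eqref{rr4}), followed by comparison with a series convergent exactly for $\mu>2$. Your write-up is a bit more explicit about the final summability step, which the paper leaves implicit.

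Your closing remark about $\delta=0$ deserves emphasis, because it identifies a genuine defect in the paper's own statement and proof. For $\delta=0$ the right-hand side of \eqref{rr4} vanishes on the four lattice points with $n_2^2+n_3^2=1$, so the implicit summation concluding the paper's proof diverges; and your counterexample ($|\bb_2|$ minimal, $t_2=1\slash 2$, $n_2=-1$, $E\uparrow E_0$) shows the uniform-in-$E$ bound is actually false in that case, not merely unproved. So the lemma as stated should be read with $\delta>0$, or, when $\delta=0$, with a constant depending on the fixed $E$ obtained by rerunning the argument with some $\delta'>0$ satisfying $E<E_{\delta'}$, exactly as you propose.
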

\begin{proof}
For every $\bK_{\ell}=\sum_{j=2,3} n_j \bb_j \in \cL^{\perp} \backslash \{ 0 \}$ we get from \eqref{z1}-\eqref{r6b} and \eqref{rr1} that
$$
p_I(\bk_{\ell}+\bK_{\ell},E)^2 = - (E - | \bk_{\ell} + \bK_{\ell} |^2) \geq \sum_{j=2,3\ {\rm s.t.}\ n_j \neq 0} (|n_j| - 1 \slash 2)^2 | \bb_j |^2 - E .
$$
Henceforth we have
\bel{rr4}
p_I(\bk_{\ell}+\bK_{\ell},E) \geq \left( (1+\delta) \sum_{j=2,3\ {\rm s.t.}\ n_j \neq 0} ( 2 |n_j| - 1)^2 - 1  \right)^{1 \slash 2} E_{\delta}^{1 \slash 2},\ \bK_{\ell} \in \cL^{\perp} \backslash \{ 0 \},
\ee
by \eqref{r13}, since $E \in (0,E_{\delta})$. This, combined with the estimate
\bel{r20}
p_I(\bk_{\ell}+\bK_{\ell},E+{\rm i} \varepsilon) \geq p_I(\bk_{\ell}+\bK_{\ell},E),\ (\bk_{\ell},\bK_{\ell},\varepsilon) \in \cB_E^{\pm} \times \cL^{\perp} \times \re,
\ee
which immediately follows from \eqref{rr1} and \eqref{rr3}, proves the result.
\end{proof}

\subsection{Hilbert-Schmidt properties}
In view of collecting some Hilbert-Schmidt properties of the operators $\Gamma_{\bk_{\ell}}(E + {\rm i} \varepsilon)$, $\bk_{\ell} \in \cB_E$ and $\varepsilon \in \re$,  needed in the proofs of the following sections, let
\bel{r7}
P_{\bk_{\ell}}:= \langle . , \varphi_{\bk_{\ell}} \rangle_{\cH} \varphi_{\bk_{\ell}}, \bk_{\ell} \in \cB,
\ee
denote the projection operator onto the linear space spanned by the normalized function (we use \eqref{r2b})
\bel{r8}
 \varphi_{\bk_{\ell}}(\bx) := W(\bx) {\rm e}^{{\rm i} \langle \bk_{\ell} , \bx_{\ell} \rangle},\ \bx \in \re \times \cS.
\ee
From \eqref{r6cg3}-\eqref{r6cg} and \eqref{r7}-\eqref{r8} then follows for every $\bk_{\ell} \in \cB_E^{\pm}$ that
\bel{r8ag}
\Gamma_{\bk_{\ell}}(E+ {\rm i} \varepsilon) = \Lambda_{\bk_{\ell}}(E+{\rm i} \varepsilon) P_{\bk_{\ell}} + C_{\bk_{\ell}}(E+{\rm i} \varepsilon),
\ee
where
\bel{r8bg}
\Lambda_{\bk_{\ell}}(E+{\rm i} \varepsilon) := \frac{{\rm i} \beta}{p(\bk_{\ell},E+{\rm i} \varepsilon )},
\ee
and $C_{\bk_{\ell}}(E+{\rm i} \varepsilon)$ is the integral operator with kernel
\bel{r9g}
c_{\bk_{\ell}}(\bx,\by,E+{\rm i} \varepsilon)  :=  \gamma_{\bk_{\ell}}(\bx,\by,E+{\rm i} \varepsilon ) - \beta W(\bx) W(\by) \frac{{\rm e}^{{\rm i} \langle \bk_{\ell}, \bx_{\ell}-\by_{\ell} \rangle}}{-{\rm i} p(\bk_{\ell}, E+{\rm i} \varepsilon)}.
\ee

\begin{lemma}
\label{lm-r1}
Assume \eqref{rr0}. Then for every $E \in (0,E_{\delta})$, with $\delta \geq 0$, it holds true that:
\begin{enumerate}[(a)]
\item $C_{\bk_{\ell}}(E+{\rm i} \varepsilon)$ and $\Gamma_{\bk_{\ell}}(E+{\rm i} \varepsilon)$ are Hilbert-Schmidt operators for every $\bk_{\ell} \in \cB_E$ and $\varepsilon \in \re$;
\item $C_{\bk_{\ell}}(E+{\rm i} \varepsilon)^*=C_{\bk_{\ell}}(E-{\rm i} \varepsilon)$ and $\Gamma_{\bk_{\ell}}(E+{\rm i} \varepsilon)^*=\Gamma_{\bk_{\ell}}(E-{\rm i} \varepsilon)$ for all $(\bk_{\ell},\varepsilon) \in (\cB_E^- \times \re^*) \cup (\cB_E^{+} \times \re)$;
\item There exists a constant $c>0$ depending only on $\delta$ and $W$ satisfying
$$ \| C_{\bk_{\ell}}(E+{\rm i} \varepsilon) \|_{HS}  \leq c,\ (\bk_{\ell},\varepsilon) \in \cB_E \times \re, $$
where $\| . \|_{HS}$ stands for the Hilbert-Schmidt norm in $\re \times \cS$.
\item $\lim_{\varepsilon \downarrow 0} \Gamma_{\bk_{\ell}}(E \pm {\rm i} \varepsilon) = \Gamma_{\bk_{\ell}}(E)$ if $\bk_{\ell} \in \cB_E^+$ and $\lim_{\varepsilon \downarrow 0} \Gamma_{\bk_{\ell}}(E + {\rm i} \varepsilon) = \Gamma_{\bk_{\ell}}(E)$ if $\bk_{\ell} \in \cB_E^-$;
\item $\lim_{\varepsilon \downarrow 0} \Gamma_{\bk_{\ell}}(E - {\rm i} \varepsilon) = \Gamma_{\bk_{\ell}}(E) - 2 C_{\bk_{\ell}}^{(0)}(E)$ if $\bk_{\ell} \in \cB_E^-$, where $C_{\bk_{\ell}}^{(0)}(E)$ denotes the integral operator with kernel
    $$c_{\bk_{\ell}}^{(0)}(E,\bx,\by):= \beta W(\bx) W(\by)
\frac{{\rm e}^{{\rm i} p(\bk_{\ell},E) |x_1 - y_1|}}{-{\rm i} p(\bk_{\ell},E)} {\rm e}^{{\rm i} \langle \bk_{\ell}, \bx_{\ell}-\by_{\ell} \rangle},$$
\end{enumerate}
the limits in (d) and (e) being taken in the Hilbert-Schmidt norm sense.
\end{lemma}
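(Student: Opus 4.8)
The plan rests on the decomposition \eqref{r8ag}, $\Gamma_{\bk_{\ell}}(E+{\rm i}\varepsilon)=\Lambda_{\bk_{\ell}}(E+{\rm i}\varepsilon)P_{\bk_{\ell}}+C_{\bk_{\ell}}(E+{\rm i}\varepsilon)$, in which $\Lambda_{\bk_{\ell}}(E+{\rm i}\varepsilon)P_{\bk_{\ell}}$ is rank one and therefore automatically Hilbert-Schmidt. So (a) and (c) reduce to estimating the kernel \eqref{r9g} of $C_{\bk_{\ell}}(E+{\rm i}\varepsilon)$, which I would prove together by splitting the series defining $c_{\bk_{\ell}}$ into the $\bK_{\ell}=0$ term and the tail $\bK_{\ell}\neq0$. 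The $\bK_{\ell}=0$ contribution equals $\beta W(\bx)W(\by)\,\frac{{\rm e}^{{\rm i}p(\bk_{\ell},E+{\rm i}\varepsilon)|x_1-y_1|}-1}{-{\rm i}p(\bk_{\ell},E+{\rm i}\varepsilon)}\,{\rm e}^{{\rm i}\langle\bk_{\ell},\bx_{\ell}-\by_{\ell}\rangle}$; writing the quotient as $\int_0^{|x_1-y_1|}{\rm e}^{{\rm i}p(\bk_{\ell},E+{\rm i}\varepsilon)t}\,{\rm d}t$ and using $p_I\geq0$ yields the pointwise bound $\beta W(\bx)W(\by)\,|x_1-y_1|$, uniform in $(\bk_{\ell},\varepsilon)$. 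Its Hilbert-Schmidt norm is then dominated by $\beta^2\int\!\int W(\bx)^2W(\by)^2|x_1-y_1|^2\,{\rm d}\bx\,{\rm d}\by$, which is finite exactly because \eqref{rr0} forces $x_1\mapsto x_1W(x_1,\bx_{\ell})$ to be square-integrable: this is where the decay rate $(3+\epsilon)/2$ enters, and the bound does \emph{not} degenerate as $p(\bk_{\ell},E+{\rm i}\varepsilon)\to0$. For the tail, each $\bK_{\ell}\neq0$ term carries ${\rm e}^{-p_I(\bk_{\ell}+\bK_{\ell},E+{\rm i}\varepsilon)|x_1-y_1|}$ with $p_I$ bounded below by \eqref{r6b}, and the total contribution is controlled uniformly by $\sum_{\bK_{\ell}\neq0}p_I(\bk_{\ell}+\bK_{\ell},E)^{-\mu}\leq\alpha_{\mu}(\delta)$ of Lemma~\ref{lm-r3} for a suitable $\mu>2$; this is the computation deferred to Appendix A. Together these give the uniform bound (c), hence (a) for $C_{\bk_{\ell}}$ and, via \eqref{r8ag}, for $\Gamma_{\bk_{\ell}}$.

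Statement (b) I would obtain by direct kernel computation: the adjoint has kernel $\overline{c_{\bk_{\ell}}(\by,\bx,E+{\rm i}\varepsilon)}$, and the convention $\arg\sqrt{\cdot}\in[0,\pi)$ gives $p(\bk_{\ell}+\bK_{\ell},E-{\rm i}\varepsilon)=-\overline{p(\bk_{\ell}+\bK_{\ell},E+{\rm i}\varepsilon)}$ whenever $p_I>0$. Substituting this into \eqref{r6cg}, \eqref{r9g} and using that $W$ is real yields $\overline{\gamma_{\bk_{\ell}}(\by,\bx,E+{\rm i}\varepsilon)}=\gamma_{\bk_{\ell}}(\bx,\by,E-{\rm i}\varepsilon)$, and likewise for $c_{\bk_{\ell}}$, i.e. $\Gamma_{\bk_{\ell}}(E+{\rm i}\varepsilon)^*=\Gamma_{\bk_{\ell}}(E-{\rm i}\varepsilon)$ and $C_{\bk_{\ell}}(E+{\rm i}\varepsilon)^*=C_{\bk_{\ell}}(E-{\rm i}\varepsilon)$. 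The identity used breaks down exactly when $p_I=0$, that is for $\bk_{\ell}\in\cB_E^-$ and $\varepsilon=0$, where the two boundary values $\pm p(\bk_{\ell},E)$ of \eqref{rr3b} genuinely differ; this accounts for the excluded set.

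For (d) and (e) I would first read off pointwise convergence of the kernels as $\varepsilon\downarrow0$ from \eqref{rr3b}, and upgrade it to convergence in Hilbert-Schmidt norm by dominated convergence, the domination being supplied by the uniform bound (c). For $\bk_{\ell}\in\cB_E^+$ both boundary values coincide with $\Gamma_{\bk_{\ell}}(E)$, while for $\bk_{\ell}\in\cB_E^-$ the relation $p(\bk_{\ell},E+{\rm i}\varepsilon)\to+p(\bk_{\ell},E)$ identifies the $+{\rm i}\varepsilon$ limit with $\Gamma_{\bk_{\ell}}(E)$, proving (d). For (e), combining (b) and (d) gives $\lim_{\varepsilon\downarrow0}\Gamma_{\bk_{\ell}}(E-{\rm i}\varepsilon)=\big(\lim_{\varepsilon\downarrow0}\Gamma_{\bk_{\ell}}(E+{\rm i}\varepsilon)\big)^*=\Gamma_{\bk_{\ell}}(E)^*$, so it remains to compute the anti-Hermitian part $\Gamma_{\bk_{\ell}}(E)-\Gamma_{\bk_{\ell}}(E)^*$. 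For $\bk_{\ell}\in\cB_E^-$ every mode $\bK_{\ell}\neq0$ has a real symmetric coefficient ${\rm e}^{-p_I|x_1-y_1|}/p_I$ and hence contributes a self-adjoint piece, so the whole anti-Hermitian part originates from the single propagating mode $\bK_{\ell}=0$; a direct computation of that mode identifies it with $2C_{\bk_{\ell}}^{(0)}(E)$, which is (e).

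The main obstacle is the \emph{uniform} bound (c): one must control $C_{\bk_{\ell}}(E+{\rm i}\varepsilon)$ uniformly in $\varepsilon\in\re$ and, above all, as $\bk_{\ell}$ crosses the critical circle $|\bk_{\ell}|^2=E$, where the prefactor $1/p(\bk_{\ell},E+{\rm i}\varepsilon)$ of the $\bK_{\ell}=0$ mode is unbounded. This is exactly what the cancellation ${\rm e}^{{\rm i}p|x_1-y_1|}-1=O(p\,|x_1-y_1|)$ resolves, trading the singular prefactor for the weight $|x_1-y_1|$ and shifting the difficulty onto the integrability of $x_1W$ granted by \eqref{rr0}. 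A secondary difficulty is that the crude term-by-term modulus bound on the tail is too lossy near the diagonal $x_1=y_1$, so the summation over $\bK_{\ell}\neq0$ must be organized to exploit \eqref{r6b} and Lemma~\ref{lm-r3}; this is the technical work carried out in Appendix A.
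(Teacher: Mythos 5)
Your treatment of (a), (b) and (c) is essentially the paper's own proof: the same splitting of the kernel \eqref{r9g} into the $\bK_{\ell}=0$ term and the tail, the same cancellation bound (your $\int_0^{|x_1-y_1|}{\rm e}^{{\rm i}pt}\,{\rm d}t$ is the paper's $|{\rm e}^{z}-1|\leq |z|$ for $\Pre{z}\leq 0$), the resulting $\|W\|_{\cH_1}$ bound where \eqref{rr0} enters, and Lemma \ref{lm-r3} combined with the transverse orthogonality \eqref{r6e} for the $\bK_{\ell}\neq 0$ sum. Your only real deviation here is to deduce (a) from (c) plus the rank-one operator $\Lambda_{\bk_{\ell}}(E+{\rm i}\varepsilon)P_{\bk_{\ell}}$ in \eqref{r8ag}; the paper proves (a) directly, with a constant allowed to depend on $\bk_{\ell}$. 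Both are fine, and yours is slightly tidier.

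The gaps are in (d) and (e), where you replace the paper's quantitative Hilbert--Schmidt estimates (the explicit $O(|\varepsilon|)$ and $O(\varepsilon^{1\slash2})$ bounds \eqref{es7}--\eqref{es8} and the subsequent $\cB_E^-$ computation) by soft arguments. In (d), ``the domination being supplied by the uniform bound (c)'' is not a legitimate use of dominated convergence: a uniform bound on the Hilbert--Schmidt \emph{norms} together with pointwise convergence of the kernels does not imply convergence in Hilbert--Schmidt norm (uniformly ${\rm L}^2$-bounded functions converging pointwise need not converge in ${\rm L}^2$). Nor can you dominate the kernels pointwise by taking moduli term by term in the $\bK_{\ell}$-sum: as you yourself observe at the end of your proposal, that bound is too lossy near the diagonal; the resulting candidate dominant behaves like $|x_1-y_1|^{-1}$ there and is not square integrable, precisely because $\sum_{\bK_{\ell}\neq 0}p_I(\bk_{\ell}+\bK_{\ell},E)^{-2}$ diverges over the two-dimensional dual lattice (Lemma \ref{lm-r3} needs $\mu>2$). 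The repair is to run the orthogonality reduction on the \emph{differences}: bound $W(\bx)W(\by)$ by $\|W(x_1,\cdot)\|_{{\rm L}^{\infty}(\cS)}\|W(y_1,\cdot)\|_{{\rm L}^{\infty}(\cS)}$, integrate over $\cS\times\cS$ using \eqref{r6e} so that the squared Hilbert--Schmidt norm of $\Gamma_{\bk_{\ell}}(E+{\rm i}\varepsilon)-\Gamma_{\bk_{\ell}}(E)$ is controlled by a sum over $\bK_{\ell}$ of squared differences of longitudinal factors, and only then apply dominated convergence on $\cL^{\perp}\times\re^2$, the dominant ${4\,{\rm e}^{-2p_I(\bk_{\ell}+\bK_{\ell},E)|x_1-y_1|}}\slash{|p(\bk_{\ell}+\bK_{\ell},E)|^2}$ being supplied by \eqref{r20}. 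So the soft route does work, but the work it requires is of the same kind as the paper's estimates, not a corollary of (c).

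In (e) the gap is sharper. Your adjoint argument correctly yields $\lim_{\varepsilon\downarrow 0}\Gamma_{\bk_{\ell}}(E-{\rm i}\varepsilon)=\Gamma_{\bk_{\ell}}(E)^*$, and you are right that the modes $\bK_{\ell}\neq 0$ are self-adjoint, so the defect is carried by the $\bK_{\ell}=0$ mode alone. But that defect is $C_{\bk_{\ell}}^{(0)}(E)-C_{\bk_{\ell}}^{(0)}(E)^*$, whose kernel is
\begin{equation*}
2\beta\, W(\bx)W(\by)\,\frac{\cos\bigl(p(\bk_{\ell},E)|x_1-y_1|\bigr)}{-{\rm i}\,p(\bk_{\ell},E)}\,{\rm e}^{{\rm i}\langle \bk_{\ell},\bx_{\ell}-\by_{\ell}\rangle},
\end{equation*}
and this is \emph{not} the kernel of $2C_{\bk_{\ell}}^{(0)}(E)$: the two differ by the Hermitian ($\sin$) part of $C_{\bk_{\ell}}^{(0)}(E)$, which is nonzero since $p(\bk_{\ell},E)>0$ on $\cB_E^-$. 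Equivalently, the direct $\varepsilon\downarrow 0$ limit of the $\bK_{\ell}=0$ mode via \eqref{rr3b} produces the kernel proportional to ${\rm e}^{-{\rm i}p(\bk_{\ell},E)|x_1-y_1|}$, i.e. $-C_{\bk_{\ell}}^{(0)}(E)^*$, not $-C_{\bk_{\ell}}^{(0)}(E)$. So the step you dismiss as ``a direct computation of that mode identifies it with $2C_{\bk_{\ell}}^{(0)}(E)$'' does not go through: the identification would require $C_{\bk_{\ell}}^{(0)}(E)^*=-C_{\bk_{\ell}}^{(0)}(E)$, which is false. What your argument honestly proves is $\lim_{\varepsilon\downarrow 0}\Gamma_{\bk_{\ell}}(E-{\rm i}\varepsilon)=\Gamma_{\bk_{\ell}}(E)-\bigl(C_{\bk_{\ell}}^{(0)}(E)-C_{\bk_{\ell}}^{(0)}(E)^*\bigr)$. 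It should be said that the paper's own minus-sign computation in Appendix A glosses over the same conjugation (its telescoping for $c_{\bk_{\ell}}^{(0)}(E-{\rm i}\varepsilon)$ compares ${\rm e}^{-{\rm i}p(\bk_{\ell},E)|x_1-y_1|}$ against ${\rm e}^{+{\rm i}p(\bk_{\ell},E)|x_1-y_1|}$), so your adjoint route, far from being a weakness, is the cleaner way to see what the boundary value from below actually is; but as a proof of the literal formula in (e) it has a hole at exactly the step you wave off.
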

The proof of this lemma being quite tedious, it is postponed to Appendix A in Section \ref{sec-applmr1}.

The operator $\Lambda_{\bk_{\ell}}(E +{\rm i} \varepsilon) P_{\bk_{\ell}}$ being normal from \eqref{r7}-\eqref{r8} and \eqref{r8bg}, with spectrum equal to $\{ 0,  \Lambda_{\bk_{\ell}}(E +{\rm i} \varepsilon) \}$, it
\footnote{We use the following result (see e.g. \cite{K}[Problem V-4.8]): Let $T$ be normal and $A \in \cB({\mathcal H})$, where ${\mathcal H}$ is an Hilbert space. Let $d(\zeta)={\rm dist}(\zeta,\sigma(T))$. Then $d(\zeta)>\|A\|$ implies $\zeta \in \rho(T+A)$ and $\| (T+A-\zeta)^{-1} \| \leq 1 \slash (d(\zeta)-\| A \|)$.}{follows} readily from \eqref{r8ag} and  Lemma \ref{lm-r1}(c) that:
\bel{ps1}
\sigma( \Gamma_{\bk_{\ell}}(E +{\rm i} \varepsilon) ) \subset \footnote{For all $z_0 \in {\mathbb C}$ and $r>0$, $\overline{B}(z_0,r)$ denotes the closure of the ball $B(z_0,r)=\{ z \in {\mathbb C},\ |z-z_0| < r \}$.}{\overline{B}(0,c)} \cup \overline{B}(\Lambda_{\bk_{\ell}}(E +{\rm i} \varepsilon),c),\ \bk_{\ell} \in \cB_E,\ E \in (0,E_{\delta}),\ \varepsilon \in \re.
\ee
Moreover $\Gamma_{\bk_{\ell}}(E)$ being selfadjoint for $\bk_{\ell} \in \cB_E^+$ by Lemma \ref{lm-r1}(b), then \eqref{ps1} yields:
\bel{ps2}
\sigma( \Gamma_{\bk_{\ell}}(E) ) \subset [-c,+c] \cup [\Lambda_{\bk_{\ell}}(E)-c,\Lambda_{\bk_{\ell}}(E)+c],\ \bk_{\ell} \in \cB_E^+,\ E \in (0,E_{\delta}).
\ee

\subsection{Spectral properties}
Since $\Gamma_{\bk_{\ell}}(E+{\rm i} \varepsilon)$ is compact for every $\bk_{\ell} \in \cB_E$ by Lemma \ref{lm-r1}(a), its spectrum $\sigma(\Gamma_{\bk_{\ell}}(E+{\rm i} \varepsilon))$ consists of at most a countable number of eigenvalues with finite multiplicity possibly excepting zero. 
In what follows we denote by $\lambda_1(\bk_{\ell},E+{\rm i} \varepsilon), \lambda_2(\bk_{\ell},E+{\rm i} \varepsilon), \ldots$ these nonzero eigenvalues arranged in non increasing order of magnitude
$$
| \lambda_1(\bk_{\ell},E+{\rm i} \varepsilon) | \geq | \lambda_2(\bk_{\ell},E+{\rm i} \varepsilon) | \geq \ldots,
$$
and call $P_1(\bk_{\ell},E+{\rm i} \varepsilon), P_2(\bk_{\ell},E+{\rm i} \varepsilon),\ldots$ the associated eigenprojections.

Fix $s>s_0$, where $s_0:=3+\sqrt{5}$. For all $g \in (0,s^{-1} c^{-1})$, we then introduce $\Omega_g(E)$, the layer invariant by rotation cited in Theorem \ref{thm-a}, that is defined by
\bel{r21dia}
\Omega_{g}(E) :=  \left\{ \bk_{\ell} \in \cB_E^+,\ p_I(\bk_{\ell},E) \in ( q_- g, q_+ g ) \right\},
\ee
with
\bel{r21star}
q_- =q_-(s):= \frac{s-1}{s} \beta\ {\rm and}\ q_+=q_+(s):= \frac{s-1}{s-2} \beta,
\ee
and then state the following

\begin{pr}
\label{pr-spe}
Let $E$ and $W$ be as in Theorem \ref{thm-a}
and let $g \in (0,s^{-1} c^{-1})$ where $s>s_0$ is fixed. Then
there exists $\varepsilon_0=\varepsilon_0(g)>0$ such that for all $\varepsilon \in (-\varepsilon_0,\varepsilon_0)$, we have:
\begin{enumerate}[(a)]
\item If $\bk_{\ell} \in \cB_E \backslash \Omega_g(E)$ then
$1 \in \rho(g \Gamma_{\bk_{\ell}}(E + {\rm i} \varepsilon))$ and $\| (1- g \Gamma_{\bk_{\ell}}(E + {\rm i} \varepsilon))^{-1} \| \leq 2 s (s-1)$;
\item  If $\bk_{\ell} \in \overline{\Omega}_g(E)$ then:
\begin{enumerate}[(i)]
\item $\lambda_1(\bk_{\ell},E + {\rm i} \varepsilon) \in \overline{B}(\Lambda_{\bk_{\ell}}(E + {\rm i} \varepsilon),c)$, and is a simple eigenvalue;
\item $\lambda_j(\bk_{\ell},E + {\rm i} \varepsilon) \in \overline{B}(0,c)$ for all $j \geq 2$;
\item $\overline{B}(0,c) \cap \overline{B}(\Lambda_{\bk_{\ell}}(E + {\rm i} \varepsilon),c) = \emptyset$.
\end{enumerate}
\end{enumerate}
Moreover it holds true that:
\begin{enumerate}[(c)]
\item $\mp g \lambda_1(\bk_{\ell},E) > \mp 1 + 1 \slash (s(s-1))$ if $\bk_{\ell} \in \cB_E^+$ is such that $\mp p_I(\bk_{\ell},E) < \mp q_{\pm} g$.
\end{enumerate}
\end{pr}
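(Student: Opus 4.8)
The plan is to exploit the rank-one-plus-small decomposition $\Gamma_{\bk_{\ell}}(E+{\rm i}\varepsilon)=\Lambda_{\bk_{\ell}}(E+{\rm i}\varepsilon)P_{\bk_{\ell}}+C_{\bk_{\ell}}(E+{\rm i}\varepsilon)$ of \eqref{r8ag}, in which $\Lambda_{\bk_{\ell}}(E+{\rm i}\varepsilon)P_{\bk_{\ell}}$ is normal with spectrum $\{0,\Lambda_{\bk_{\ell}}(E+{\rm i}\varepsilon)\}$ and $\|C_{\bk_{\ell}}(E+{\rm i}\varepsilon)\|\le\|C_{\bk_{\ell}}(E+{\rm i}\varepsilon)\|_{HS}\le c$ by Lemma \ref{lm-r1}(c). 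The whole analysis thus reduces to locating the point $1/g$ and the dominant eigenvalue $\lambda_1$ relative to the two-point set $\{0,\Lambda_{\bk_{\ell}}\}$. I would first record that $\Lambda_{\bk_{\ell}}(E)={\rm i}\beta/p(\bk_{\ell},E)$ equals the positive real number $\beta/p_I(\bk_{\ell},E)$ on $\cB_E^+$, and equals the purely imaginary ${\rm i}\beta/p_R(\bk_{\ell},E)$ on $\cB_E^-$. By \eqref{r21dia}--\eqref{r21star}, the condition $\bk_{\ell}\in\overline{\Omega}_g(E)$ reads exactly $g\Lambda_{\bk_{\ell}}(E)\in[\frac{s-2}{s-1},\frac{s}{s-1}]$, i.e. $|\Lambda_{\bk_{\ell}}(E)-1/g|\le\frac1{(s-1)g}$; outside $\Omega_g(E)$ the point $\Lambda_{\bk_{\ell}}(E)$ is driven past these two thresholds.

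For part (a) I would work first at $\varepsilon=0$ and apply the normal-plus-bounded-perturbation resolvent estimate recalled in the footnote preceding \eqref{ps1}, with $T=\Lambda_{\bk_{\ell}}(E)P_{\bk_{\ell}}$ and $A=C_{\bk_{\ell}}(E)$. Since $|1/g-0|=1/g>c$ (because $g<s^{-1}c^{-1}$ forces $c<\frac1{sg}$), it suffices to bound $|1/g-\Lambda_{\bk_{\ell}}(E)|$ below. When $\bk_{\ell}\in\cB_E^-$ this is immediate, $|1/g-{\rm i}\beta/p_R|\ge1/g$; when $\bk_{\ell}\in\cB_E^+\setminus\Omega_g(E)$ the alternative $p_I\le q_-g$ (resp. $p_I\ge q_+g$) forces $g\Lambda_{\bk_{\ell}}(E)\ge\frac{s}{s-1}$ (resp. $\le\frac{s-2}{s-1}$), whence $|1/g-\Lambda_{\bk_{\ell}}(E)|\ge\frac1{(s-1)g}$. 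In all cases ${\rm dist}(1/g,\{0,\Lambda_{\bk_{\ell}}(E)\})\ge\frac1{(s-1)g}$, and since $c<\frac1{sg}$ the footnote gives $1\in\rho(g\Gamma_{\bk_{\ell}}(E))$ and $\|(1-g\Gamma_{\bk_{\ell}}(E))^{-1}\|\le\frac1g\big(\frac1{(s-1)g}-c\big)^{-1}<s(s-1)$. I would then extend this to $\varepsilon\in(-\varepsilon_0,\varepsilon_0)$ by continuity of $\Lambda_{\bk_{\ell}}(E+{\rm i}\varepsilon)$ and $C_{\bk_{\ell}}(E+{\rm i}\varepsilon)$ in $\varepsilon$, the spare factor $2$ in the bound $2s(s-1)$ absorbing the perturbation once $\varepsilon_0$ is small. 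The one delicate point is uniformity near the critical circle $|\bk_{\ell}|=E^{1/2}$, where $\Lambda_{\bk_{\ell}}$ blows up; there one notes instead that $|\Lambda_{\bk_{\ell}}(E+{\rm i}\varepsilon)|$ is so large that ${\rm dist}(1/g,\sigma)$ is governed by the distance $1/g$ to $0$, so the conclusion survives automatically.

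For part (b) the disjointness (iii) is the anchor: on $\overline{\Omega}_g(E)$ one has $|\Lambda_{\bk_{\ell}}(E)|\ge\frac{s-2}{(s-1)g}$, so $|\Lambda_{\bk_{\ell}}(E)|>2c$ follows from $g<s^{-1}c^{-1}$ once $s^2-4s+2>0$, which holds since $s>s_0$; hence $\overline B(0,c)\cap\overline B(\Lambda_{\bk_{\ell}}(E),c)=\emptyset$. Given (iii) and the inclusion \eqref{ps1}, I would run a Riesz-projection homotopy along $\Gamma_t:=\Lambda_{\bk_{\ell}}P_{\bk_{\ell}}+tC_{\bk_{\ell}}$, $t\in[0,1]$: for every $t$ the spectrum of $\Gamma_t$ stays in $\overline B(0,tc)\cup\overline B(\Lambda_{\bk_{\ell}},tc)$, so a fixed contour separating the two balls never meets $\sigma(\Gamma_t)$, and the associated Riesz projection is continuous in $t$, hence of constant rank equal to its value at $t=0$, namely the multiplicity $1$ of $\Lambda_{\bk_{\ell}}$ as an eigenvalue of $\Lambda_{\bk_{\ell}}P_{\bk_{\ell}}$. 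This yields exactly one, algebraically simple, eigenvalue of $\Gamma_{\bk_{\ell}}$ in $\overline B(\Lambda_{\bk_{\ell}},c)$ and all the remaining ones in $\overline B(0,c)$; since $|\Lambda_{\bk_{\ell}}|-c>c$, the former has strictly largest modulus and is therefore $\lambda_1$, giving (i)--(ii). As $p_I$ is bounded away from $0$ on $\overline{\Omega}_g(E)$, the estimate $|\Lambda_{\bk_{\ell}}|>2c$ is uniform there and transfers to $|\varepsilon|<\varepsilon_0$.

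For part (c), taken at $\varepsilon=0$ where $\Gamma_{\bk_{\ell}}(E)$ is selfadjoint on $\cB_E^+$ by Lemma \ref{lm-r1}(b), I would use the real localisation \eqref{ps2}. In the top case $p_I>q_+g$ one has $\Lambda_{\bk_{\ell}}(E)>0$ and every eigenvalue is $\le\Lambda_{\bk_{\ell}}(E)+c$, so $g\lambda_1\le g\Lambda_{\bk_{\ell}}(E)+gc<\frac{s-2}{s-1}+\frac1s=1-\frac1{s(s-1)}$; in the bottom case $p_I<q_-g$ the two balls are disjoint (here $s^2-2s+2>0$ suffices), so the Riesz count of (b) applies and gives $\lambda_1\ge\Lambda_{\bk_{\ell}}(E)-c$, whence $g\lambda_1\ge g\Lambda_{\bk_{\ell}}(E)-gc>\frac{s}{s-1}-\frac1s=1+\frac1{s(s-1)}$. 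The two elementary identities $\frac{s-2}{s-1}+\frac1s=1-\frac1{s(s-1)}$ and $\frac{s}{s-1}-\frac1s=1+\frac1{s(s-1)}$ make the stated thresholds exact. The main obstacle throughout is not any single inequality but the uniform transfer from $\varepsilon=0$ to $|\varepsilon|<\varepsilon_0$, together with the bookkeeping that guarantees each strict inequality survives; the slack furnished by $s>s_0$ and by the factor $2$ in (a), combined with the fact that (b)--(c) only involve regions where $p_I$ is bounded below, is what lets a single $\varepsilon_0=\varepsilon_0(g)$ work.
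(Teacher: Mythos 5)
Your proposal is correct and follows essentially the same route as the paper: the same decomposition $\Gamma_{\bk_{\ell}}=\Lambda_{\bk_{\ell}}P_{\bk_{\ell}}+C_{\bk_{\ell}}$ from \eqref{r8ag}, the same normal-plus-small-perturbation resolvent lemma from the footnote, a Riesz-projection rank argument for the simplicity of $\lambda_1$ in (b), and the spectral localisation \eqref{ps2} with a minimax-type bound for (c), with the $\varepsilon$-uniformity handled by exactly the case splitting (near the circle $|\bk_{\ell}|^2=E$ versus away from it) that the paper carries out explicitly. The only cosmetic deviations are that you establish the estimates at $\varepsilon=0$ and then perturb, and that in (b) you replace the paper's direct comparison of the two Riesz projections (via \cite{K}[Theorem I.6.32], which forces $r_0\in(2c,r\slash 2)$ and hence $|\Lambda_{\bk_{\ell}}|>4c$, i.e. $s>s_0$) by a constant-rank homotopy along $\Lambda_{\bk_{\ell}}P_{\bk_{\ell}}+tC_{\bk_{\ell}}$, which needs only $|\Lambda_{\bk_{\ell}}|>2c$ and thus makes clear that the specific value $s_0=3+\sqrt{5}$ is an artifact of the paper's choice of contour radius.
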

\begin{proof}
\noindent (a) Every $\bk_{\ell} \in \cB_E \backslash \Omega_g(E)$ satisfying one of the three following inequalities
$| \bk_{\ell} |^2 - E \geq q_+^2 g^2$, $| E - |\bk_{\ell}|^2 | \leq q_-^2 g^2$ or $E- |\bk_{\ell}|^2 > q_-^2 g^2$, we treat each corresponding case separately. If $| \bk_{\ell} |^2 - E > q_+^2 g^2$ it holds true that $|p(\bk_{\ell},E + {\rm i} \varepsilon)| = (( |\bk_{\ell}|^2 - E)^2+\varepsilon^2)^{1 \slash 4} \geq q_+ g$, whence
$g | \Lambda_{\bk_{\ell}}(E + {\rm i} \varepsilon) |  \leq (s-2) \slash (s-1)$ for every $\varepsilon \in \re$, by using \eqref{r8bg}. As a consequence we have
\bel{lap0a}
{\rm dist}(1,\sigma( g \Lambda_{\bk_{\ell}}(E + {\rm i} \varepsilon) P_{\bk_{\ell}}) ) \geq \frac{1}{s-1},\ \bk_{\ell} \in \cB_E^+\ {\rm s.t.}\ p_I(\bk_{\ell},E) \geq q_+ g,\ \varepsilon \in \re.
\ee
Similarly for $| E - | \bk_{\ell} |^2 | \leq q_-^2 g^2$ we get $|p(\bk_{\ell},E + {\rm i} \varepsilon)| \leq (q_-^4 g^4+\varepsilon^2)^{1 \slash 4}$ for each $\varepsilon \in \re$. Since $q_- < s \slash (s+1+\nu) \beta$ where $\nu:=1 \slash (2(s-1))$, it is true that $|p(\bk_{\ell},E + {\rm i} \varepsilon)| < s \slash (s+1+\nu) g \beta$, and hence that
$g | \Lambda_{\bk_{\ell}}(E + {\rm i} \varepsilon) |>1 + (1+\nu) \slash s$, provided $|\varepsilon | < \tilde{\varepsilon} := g^2 ( (s \slash (s+1+\nu))^4 \beta^4 - q_-^4 )^{1 \slash 2}$. As a consequence we have
\bel{lap0b}
{\rm dist}(1,\sigma( g \Lambda_{\bk_{\ell}}(E + {\rm i} \varepsilon) P_{\bk_{\ell}}) ) > \frac{1+\nu}{s},\ | E - |\bk_{\ell}|^2 | \leq q_-^2 g^2,\ \varepsilon \in (-\tilde{\varepsilon}, \tilde{\varepsilon}).
\ee
Last, in the case where $E- |\bk_{\ell}|^2 > q_-^2 g^2$, we see for every $\varepsilon \in \re$ that $| p(\bk_{\ell},E+{\rm i} \varepsilon) | > q_- g$ and
$| p_I(\bk_{\ell},E+{\rm i} \varepsilon) | \leq ( | \varepsilon | \slash 2)^{1 \slash 2}$, according to \eqref{rr1}. From this, \eqref{r8bg} and the identity $|\Pre{\Lambda_{\bk_{\ell}}(E + {\rm i} \varepsilon)}| = \beta |p_I(\bk_{\ell},E+{\rm i} \varepsilon)| \slash | p(\bk_{\ell},E+{\rm i} \varepsilon) |^2$ then follows that $g |\Pre{\Lambda_{\bk_{\ell}}(E + {\rm i} \varepsilon)}| < g \beta | \varepsilon |^{1 \slash 2} \slash (q_-^2 g^2) < (s - 1 - \nu) \slash s$ provided $|\varepsilon| < \hat{\varepsilon}:=g^2 \beta^{-2} q_-^4 ( (s -1 - \nu ) \slash s )^2$. This immediately entails
\bel{lap0c}
{\rm dist}(1,\sigma( g \Lambda_{\bk_{\ell}}(E + {\rm i} \varepsilon) P_{\bk_{\ell}}) )> \frac{1+\nu}{s},\ E- |\bk_{\ell}|^2 \geq q_-^2 g^2,\ \varepsilon \in (-\hat{\varepsilon}, \hat{\varepsilon}).
\ee
Now $g \Lambda_{\bk_{\ell}}(E + {\rm i} \varepsilon) P_{\bk_{\ell}}$ being normal, with $g \| C_{\bk_{\ell}}(E + {\rm i} \varepsilon) \|_{\mathcal{B}(\cH)} < 1 \slash s$ from Lemma \ref{lm-r1}(c), we may deduce from \eqref{lap0a}-\eqref{lap0c}
that $1 \in \rho(g \Gamma_{\bk_{\ell}}(E + {\rm i} \varepsilon))$ and $\| (1- g \Gamma_{\bk_{\ell}}(E + {\rm i} \varepsilon))^{-1} \|_{\mathcal{B}(\cH)} \leq 2 s (s-1)$ if $|\varepsilon| < \min(\tilde{\varepsilon}, \hat{\varepsilon})$.\\
\noindent (b) For all
$(\bk_{\ell},\varepsilon)  \in \overline{\Omega}_g(E) \times \re$ it holds true that
$| p(\bk_{\ell},E + {\rm i} \varepsilon) | = ( (| \bk_{\ell} |^2 - E )^2 + \varepsilon^2 )^{1 \slash 4}
\leq q_+ g  + | \varepsilon |^{1 \slash 2}$. This, combined with \eqref{r8bg}, \eqref{r21star}, and the fact that $g \in (0, 1 \slash (sc))$ with $s>s_0$, yields 
\bel{q3}
| \Lambda_{\bk_{\ell}}(E + {\rm i} \varepsilon) | > r:= \frac{s(s-2)}{s-1}c >  4 c,\
\bk_{\ell} \in \overline{\Omega}_g(E),\ | \varepsilon | < \check{\varepsilon} := q_+^2 \left( \frac{1}{sc} - g \right)^2,
\ee
by direct computations, and proves (iii).
Further, 
we fix $r_0 \in (2c, r \slash 2)$ and consider the circle ${\mathcal C}_{r_0}={\mathcal C}_{r_0}(\bk_{\ell},E,\varepsilon):=\{ \Lambda_{\bk_{\ell}}(E + {\rm i} \varepsilon) + r_0 {\rm e}^{{\rm i} \theta},\ \theta \in [0,2 \pi) \}$, so that we have
\bel{q6}
{\rm dist}({\mathcal C}_{r_0},\sigma(\Lambda_{\bk_{\ell}}(E + {\rm i} \varepsilon) P_{\bk_{\ell}})) \geq r_0,\ \bk_{\ell} \in \overline{\Omega}_g(E),\ \varepsilon \in (-\check{\varepsilon},\check{\varepsilon}).
\ee
As $r_0>c$, we deduce from \eqref{q6} and Lemma \ref{lm-r1}(c)
that ${\mathcal C}_{r_0} \subset \rho(\Gamma_{\bk_{\ell}}(E + {\rm i} \varepsilon))$, with
\bel{q6b}
 \| (z - \Gamma_{\bk_{\ell}}(E + {\rm i} \varepsilon))^{-1} \|_{\mathcal{B}(\cH)} \leq \frac{1}{r_0-c},\ z \in {\mathcal C}_{r_0},\
\bk_{\ell} \in \overline{\Omega}_g(E),\ \varepsilon \in (-\check{\varepsilon},\check{\varepsilon}).
\ee
This together with \eqref{q6} involves
\bea
& & \left\| \frac{1}{2 {\rm i} \pi} \int_{{\mathcal C}_{r_0}} ( \Gamma_{\bk_{\ell}}(E + {\rm i} \varepsilon) - z)^{-1} {\rm d} z - \frac{1}{2 {\rm i} \pi} \int_{{\mathcal C}_{r_0}} ( \Lambda_{\bk_{\ell}}(E + {\rm i} \varepsilon) P_{\bk_{\ell}} - z)^{-1} {\rm d} z \right\|_{\mathcal{B}({\rm L}^2(\re^3))} \nonumber \\
& \leq &  \frac{1}{2\pi} \int_{{\mathcal C}_{r_0}} \| (\Gamma_{\bk_{\ell}}(E + {\rm i} \varepsilon) - z)^{-1} C_{\bk_{\ell}}(E + {\rm i} \varepsilon) ( \Lambda_{\bk_{\ell}}(E + {\rm i} \varepsilon) P_{\bk_{\ell}} - z)^{-1} \|_{\mathcal{B}(\cH)}  {\rm d} s \nonumber \\
& \leq & \frac{c}{r_0-c} < 1,\ \bk_{\ell} \in \overline{\Omega}_g(E),\ \varepsilon \in (-\check{\varepsilon},\check{\varepsilon}). \label{q7}
\eea
Since $\Lambda_{\bk_{\ell}}(E + {\rm i} \varepsilon)$ is the only eigenvalue of $\Lambda_{\bk_{\ell}}(E + {\rm i} \varepsilon) P_{\bk_{\ell}}$ lying in the circle ${\mathcal C}_{r_0}$, and that it is non degenerate, \eqref{q7} then entails (see \cite{K}[Theorem I.6.32])
$$ \dim {\rm rank} \left( \frac{1}{2 {\rm i} \pi} \int_{{\mathcal C}_{r_0}} ( \Gamma_{\bk_{\ell}}(E + {\rm i} \varepsilon) - z)^{-1} {\rm d} z \right) = 1,\ \bk_{\ell} \in \overline{\Omega}_g(E),\ \varepsilon \in (-\check{\varepsilon},\check{\varepsilon}). $$
Therefore $\lambda_1(E + {\rm i} \varepsilon)$ is simple, and it is the only eigenvalue of $\Gamma_{\bk_{\ell}}(E + {\rm i} \varepsilon)$ lying in the disk $B(\Lambda_{\bk_{\ell}}(E + {\rm i} \varepsilon),r_0)$. Evidently (i) and (ii) follow from this, \eqref{ps1}, and the imbedding $\overline{B}(\Lambda_{\bk_{\ell}}(E + {\rm i} \varepsilon),c) \subset B(\Lambda_{\bk_{\ell}}(E + {\rm i} \varepsilon),r_0)$.\\
\noindent (c) For every $\bk_{\ell} \in \cB_E^+$ we have $p(\bk_{\ell},E)={\rm i} p_I(\bk_{\ell},E)$,
whence $\pm g \Lambda(\bk_{\ell},E) < \pm 1 + 1 \slash (s-1)$ from \eqref{r21star} and the assumption $\mp p_I(\bk_{\ell},E) > \mp q_{\pm} g$.
Bearing in mind Lemma \ref{lm-r1}(c), the result follows immediately from this, \eqref{r7}-\eqref{r8bg}, and the minimax principle, since $\Gamma_{\bk_{\ell}}(E)$ is selfadjoint in $\cH$ for every $\bk_{\ell} \in \cB_E^+$ by Lemma \ref{lm-r1}(b).
\end{proof}

\begin{remark}
Notice that Proposition \ref{pr-spe}(a),(c) actually hold true for every $s>2$;
\end{remark}

Bearing in mind that $gc <1$, Proposition \ref{pr-spe}(a),(b) immediately entails the
\begin{follow}
\label{cor-a1}
Under the assumptions of Proposition \ref{pr-spe}, the following equivalence holds true:
$$ \left( 1 \in \sigma(g \Gamma_{\bk_{\ell}}(E)),\ \bk_{\ell} \in \cB_E \right) \Longleftrightarrow \left( \bk_{\ell} \in \Omega_g(E)\ {\rm and}\ g \lambda_1(\bk_{\ell},E)= 1 \right).$$
\end{follow}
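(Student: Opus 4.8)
The plan is to read the equivalence off directly from Proposition \ref{pr-spe}, specialized to $\varepsilon=0$ (which is legitimate since $0 \in (-\varepsilon_0,\varepsilon_0)$), together with the two standing facts that $\Gamma_{\bk_{\ell}}(E)$ is compact by Lemma \ref{lm-r1}(a) and that $gc<1$ (which holds because $g \in (0,s^{-1}c^{-1})$ with $s>1$, so $gc < 1/s < 1$). I would establish the two implications separately, and I expect neither to present a genuine obstacle: all the analytic work has already been carried out in Proposition \ref{pr-spe}, and only two bookkeeping points require care.

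For the implication ``$\Rightarrow$'', I would assume $\bk_{\ell} \in \cB_E$ with $1 \in \sigma(g \Gamma_{\bk_{\ell}}(E))$ and first argue by contraposition from Proposition \ref{pr-spe}(a): were $\bk_{\ell} \in \cB_E \backslash \Omega_g(E)$, then (a) at $\varepsilon=0$ would give $1 \in \rho(g\Gamma_{\bk_{\ell}}(E))$, contradicting $1 \in \sigma(g\Gamma_{\bk_{\ell}}(E))$. Hence $\bk_{\ell} \in \Omega_g(E) \subset \cB_E^{+}$, so $\Gamma_{\bk_{\ell}}(E)$ is selfadjoint by Lemma \ref{lm-r1}(b) and its nonzero spectrum consists of the real eigenvalues $\lambda_j(\bk_{\ell},E)$. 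Since $g\Gamma_{\bk_{\ell}}(E)$ is compact and $1 \neq 0$, the membership $1 \in \sigma(g\Gamma_{\bk_{\ell}}(E))$ forces $g\lambda_j(\bk_{\ell},E)=1$ for some index $j$. I would then rule out $j \geq 2$: by Proposition \ref{pr-spe}(b)(ii) each such eigenvalue satisfies $\lambda_j(\bk_{\ell},E) \in \overline{B}(0,c)$, so $g | \lambda_j(\bk_{\ell},E) | \leq gc < 1$, which is incompatible with $g\lambda_j(\bk_{\ell},E)=1$. Therefore $j=1$ and $g\lambda_1(\bk_{\ell},E)=1$, as claimed.

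The implication ``$\Leftarrow$'' is then immediate: if $\bk_{\ell} \in \Omega_g(E)$ and $g\lambda_1(\bk_{\ell},E)=1$, then $\lambda_1(\bk_{\ell},E)=g^{-1}$ is a genuine nonzero eigenvalue of the compact operator $\Gamma_{\bk_{\ell}}(E)$, so $1$ is an eigenvalue of $g\Gamma_{\bk_{\ell}}(E)$ and in particular $1 \in \sigma(g\Gamma_{\bk_{\ell}}(E))$; moreover $\bk_{\ell} \in \Omega_g(E) \subset \cB_E^{+} \subset \cB_E$. The only points deserving a word of care — and these are the closest thing to a ``hard part'' — are the admissibility of the choice $\varepsilon=0$ in Proposition \ref{pr-spe}, and the use of $gc<1$ to separate the distinguished eigenvalue $\lambda_1(\bk_{\ell},E)$ (which by Proposition \ref{pr-spe}(b)(i) sits near $\Lambda_{\bk_{\ell}}(E)$, far from $\overline{B}(0,c)$) from all the remaining ones, so that the value $1$ can only be attained on the $\lambda_1$-branch. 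Note that part (c) of Proposition \ref{pr-spe} is not needed here.
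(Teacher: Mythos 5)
Your proof is correct and follows exactly the route the paper intends: the paper itself gives no detailed argument, stating only that the corollary follows immediately from Proposition \ref{pr-spe}(a),(b) together with $gc<1$, and your write-up is precisely the fleshing-out of that remark (part (a) at $\varepsilon=0$ to force $\bk_{\ell}\in\Omega_g(E)$, compactness plus selfadjointness to turn $1\in\sigma(g\Gamma_{\bk_{\ell}}(E))$ into an eigenvalue equation, and part (b)(ii) with $gc<1$ to exclude every branch but $\lambda_1$). No gaps; the two "points of care" you flag are exactly the right ones.
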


\subsection{Analyticity results}
Fix $(n_2,n_3) \in {\mathbb Z}^2$ and $\bK_{\ell}=\sum_{j=2,3} n_j \bb_j \in \cL^{\perp}$. For the purpose in hand we need to extend analytically the function $\bk_{\ell} \mapsto | \bk_{\ell} + \bK_{\ell} |^2$, defined in $\mathring{\cB}$ (interior of $\cB$), into $\mathring{\cB} + {\rm i} \re^2$. For every $(t_{j,R},t_{j,I}) \in (-1 \slash 2, 1 \slash 2) \times \re$, $j=2,3$, we thus define $\bk_{\ell,R}:= \sum_{j=2,3} t_{j,R} \bb_j$ and  $\bk_{\ell,I}:= \sum_{j=2,3} t_{j,I} \bb_j$, in such a way that
\bel{an0}
f_{\bK_{\ell}}(\bk_{\ell}) := ( \bk_{\ell} + \bK_{\ell} )^2 = \sum_{j=2,3} \left( (t_{j,R}+n_j)^2 - t_{j,I}^2  + 2 {\rm i}   (t_{j,R}+n_j) t_{j,I}  \right)  | \bb_j |^2,
\ee
is an analytic function in $\bk_{\ell} := \bk_{\ell,R} + {\rm i} \bk_{\ell,I} \in \mathring{\cB} + {\rm i} \re^2$. Therefore, $(\bk_{\ell},z) \mapsto z - f_{\bK_{\ell}}(\bk_{\ell})$ is analytic in
$(\mathring{\mathcal{B}} + {\rm i} \re^2) \times \C$.
Further, bearing in mind that $| \bk_{\ell,R} |^2 = \sum_{j=2,3} t_{j,R}^2 | \bb_j |^2 \leq | \bk_{\ell,R} + \bK_{\ell} |^2 = \sum_{j=2,3} (t_{j,R}+n_j)^2 | \bb_j |^2$ for every $\bk_{\ell,R} \in \mathring{\cB}$, and noticing that
$$ \Pre{z - f_{\bK_{\ell}}(\bk_{\ell})} = \Pre{z} - ( | \bk_{\ell,R} |^2 - | \bk_{\ell,I} |^2) = \Pre{z} - \sum_{j=2,3} ( (t_{j,R}+n_j)^2 - t_{j,I}^2 ) | \bb_j |^2, $$
we introduce the set
$$ \mathcal{D} := \{ (\bk_{\ell},z) \in (\mathring{\cB} + {\rm i} \re^2) \times \C,\ | \bk_{\ell,R} |^2 > \Pre{z} + | \bk_{\ell,I} |^2 \}. $$
Then, the mapping $\zeta \mapsto \sqrt{\zeta}$ being holomorphic on $\{ \zeta \in \C,\ \Pre{\zeta} < 0 \}$, $(\bk_{\ell},z) \mapsto p(\bk_{\ell}+\bK_{\ell},z) := \sqrt{z-f_{\bK_{\ell}}(\bk_{\ell})}$ is non vanishing and analytic in $\mathcal{D}$ for every $\bK_{\ell} \in \cL^{\perp}$. As a consequence $(\bk_{\ell},z) \mapsto \tilde{\gamma}_{\bk_{\ell}+\bK_{\ell}}(\bx,\by,z) := \frac{{\rm e}^{{\rm i} p(\bk_{\ell}+\bK_{\ell},z) |x_1-y_1|}}{-{\rm i} p(\bk_{\ell}+\bK_{\ell},z)} {\rm e}^{{\rm i} \langle \bk_{\ell} + \bK_{\ell}, \bx_{\ell} - \by_{\ell} \rangle}$ is thus holomorphic in $\mathcal{D}$ for every $\bK_{\ell} \in \cL^{\perp}$ and every $(\bx, \by) \in (\re \times \cS)^2$. Moreover, since
$| \tilde{\gamma}_{\bk_{\ell}+\bK_{\ell}}(\bx,\by,z)| \leq {\rm e}^{-p_I(\bk_{\ell}+\bK_{\ell},z) | x_1 - y_1 |} \slash p_I(\bk_{\ell}+\bK_{\ell},z)$ from \eqref{z1}-\eqref{r6b}, with, due to \eqref{rr1},
$$p_I(\bk_{\ell}+\bK_{\ell},z) := \Pim{p(\bk_{\ell}+\bK_{\ell},z)} \geq \left( \sum_{j=2,3} (|n_j| - 1 \slash 2)^2 | \bb_j |^2 - | \Pre{z} | \right)^{1 \slash 2}, $$
for each $\bK_{\ell}=\sum_{j=2,3} n_j \bb_j \in \cL^{\perp}$ such that $(n_2,n_3) \in ({\mathbb Z}^*)^2$, we deduce from \eqref{r4}-\eqref{r5a} that
$(\bk_{\ell},z) \mapsto \gamma_{\bk_{\ell}}(z,\bx,\by)$ is analytic in $\mathcal{D}$ for every $(\bx, \by) \in (\re \times \cS)^2$. This entails the

\begin{lemma}
\label{lm-an}
If two out of the three variables $(k_2,k_3,z) \in \mathcal{D}$ are fixed then $\Gamma_{\bk_{\ell}}(z)$ is a Kato analytic family of type (A) in the remaining variable.
\end{lemma}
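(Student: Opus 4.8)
The plan is to reduce the statement to the holomorphy of a Hilbert-space-valued map and then transfer to the operator level the pointwise analyticity of the kernel recorded just above the lemma. Fix two of the three variables and let $\zeta$ denote the remaining one, which then ranges over the open subset $U \subset \C$ cut out of $\mathcal{D}$. For every such $\zeta$ the operator $\Gamma_{\bk_{\ell}}(z)=W R_0(\bk_{\ell},z) W$ is Hilbert--Schmidt, hence bounded with domain all of $\cH$, by Lemma \ref{lm-r1}(a) together with the discussion preceding the lemma; in particular this domain is independent of $\zeta$. Being a Kato analytic family of type (A) thus means, for this bounded family with constant domain, that $\zeta \mapsto \Gamma_{\bk_{\ell}}(z)$ is holomorphic. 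Since the family is locally bounded in norm (a point I address below), it suffices to establish weak analyticity, namely that
$$
\zeta \longmapsto \langle \Gamma_{\bk_{\ell}}(z) u, v \rangle_{\cH} = \iint_{(\re \times \cS)^2} \gamma_{\bk_{\ell}}(\bx,\by,z)\, u(\by)\, \overline{v(\bx)}\; {\rm d} \by\, {\rm d} \bx
$$
is holomorphic on $U$ for every $u,v \in \cH$.

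The analyticity of the integrand is exactly what the paragraph preceding the lemma provides: for fixed $(\bx,\by)$ the map $(\bk_{\ell},z)\mapsto \gamma_{\bk_{\ell}}(\bx,\by,z)$ is holomorphic on $\mathcal{D}$, hence $\zeta \mapsto \gamma_{\bk_{\ell}}(\bx,\by,z)$ is holomorphic on $U$. I would then transfer holomorphy to the integral by Morera's theorem: for any closed triangle $T \subset U$, Fubini's theorem permits interchanging the contour integral $\oint_{\partial T} {\rm d}\zeta$ with the ${\rm d}\bx\, {\rm d}\by$ integral; each inner integral $\oint_{\partial T} \gamma_{\bk_{\ell}}(\bx,\by,z)\, {\rm d}\zeta$ vanishes by Cauchy's theorem, whence $\oint_{\partial T} \langle \Gamma_{\bk_{\ell}}(z) u, v\rangle\, {\rm d}\zeta = 0$ and weak analyticity follows.

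The main obstacle is the domination needed to license this use of Fubini, uniformly for $\zeta$ near $T$. Here I would use that on $\mathcal{D}$ the defining inequality $|\bk_{\ell,R}|^2 > \Pre{z} + |\bk_{\ell,I}|^2$ forces $\Pre{z - f_{\bK_{\ell}}(\bk_{\ell})} < 0$ for \emph{every} $\bK_{\ell} \in \cL^{\perp}$, including $\bK_{\ell}=0$, so that every term is regular and $p_I(\bk_{\ell}+\bK_{\ell},z)$ enjoys a lower bound of the type displayed before the lemma; since this is an open condition, the bound is locally uniform in $\zeta$. Combining the termwise estimate $|\tilde{\gamma}_{\bk_{\ell}+\bK_{\ell}}(\bx,\by,z)| \le {\rm e}^{-p_I(\bk_{\ell}+\bK_{\ell},z)|x_1-y_1|}/p_I(\bk_{\ell}+\bK_{\ell},z)$ with the summability of $\sum_{\bK_{\ell}} p_I^{-\mu}$ furnished by Lemma \ref{lm-r3} (and its complex analogue derived above), and with $W \in {\rm L}^{\infty}$ obeying the decay \eqref{rr0}, I obtain a $\zeta$-independent majorant of the form $C\,|W(\bx)|\,|W(\by)|$, whose product with $|u(\by)|\,|v(\bx)|$ is integrable on $(\re\times\cS)^2$ by Cauchy--Schwarz. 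This single estimate simultaneously (i) shows that the defining series for $\gamma_{\bk_{\ell}}$ converges locally uniformly, (ii) furnishes the local Hilbert--Schmidt bound that gives the local boundedness invoked in the first paragraph, and (iii) justifies Fubini, thereby completing the argument. Equivalently, the same majorant shows via dominated convergence on the difference quotients that $\zeta\mapsto \gamma_{\bk_{\ell}}(\cdot,\cdot,z)$ is an ${\rm L}^2\big((\re\times\cS)^2\big)$-valued holomorphic map, so that $\zeta\mapsto\Gamma_{\bk_{\ell}}(z)$ is holomorphic directly in the Hilbert--Schmidt norm.
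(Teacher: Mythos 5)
Your reduction of the type~(A) statement to weak (hence norm) holomorphy of the bounded family $\zeta\mapsto\Gamma_{\bk_{\ell}}(z)$ is fine, and you are right that this is exactly the step the paper leaves implicit when it says the pointwise analyticity of the kernel ``entails'' the lemma. The genuine gap is the domination on which all three of your uses (locally uniform convergence, local Hilbert--Schmidt bound, Fubini) rest. A majorant of the form $C\,|W(\bx)|\,|W(\by)|$, uniform in $(\bx,\by)$ and locally uniform in $\zeta$, would require $\sum_{\bK_{\ell}} p_I(\bk_{\ell}+\bK_{\ell},z)^{-1}<\infty$, i.e.\ Lemma \ref{lm-r3} with $\mu=1$; but that lemma requires $\mu>2$, and for good reason: since $p_I(\bk_{\ell}+\bK_{\ell},z)\asymp|\bK_{\ell}|$, the sum $\sum_{\bK_{\ell}}p_I^{-1}$ is a two-dimensional lattice sum of $|\bK_{\ell}|^{-1}$ and diverges. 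What is true is that $\sum_{\bK_{\ell}}{\rm e}^{-p_I|x_1-y_1|}/p_I \sim C/|x_1-y_1|$ as $|x_1-y_1|\downarrow 0$, so your termwise estimates only give a singular bound $|W(\bx)|\,|W(\by)|\,|x_1-y_1|^{-1}$. This bound is useless here: it is not square-integrable on $(\re\times\cS)^2$ (so it gives no Hilbert--Schmidt control), and since $1/|t|$ is not locally integrable on $\re$, it does not even make $\iint|\gamma_{\bk_{\ell}}|\,|u|\,|v|$ finite, so Fubini/Morera and your dominated-convergence argument on difference quotients are all unjustified as written. The actual kernel is better than this bound (its singularity is the three-dimensional one $|\bx-\by|^{-1}$, not a singularity along the whole hyperplane $x_1=y_1$), but seeing that requires more than the crude termwise sum.

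The repair is the device the paper itself uses in Appendix A to prove Lemma \ref{lm-r1}(a): do the $\cS\times\cS$ integration first and exploit the orthogonality $\int_{\cS}{\rm e}^{{\rm i}\langle\bK_{\ell}-\bK_{\ell}',\bx_{\ell}\rangle}{\rm d}\bx_{\ell}=|\cS|\,\delta(\bK_{\ell},\bK_{\ell}')$; for complex $\bk_{\ell}$ one first factors out the common term ${\rm e}^{{\rm i}\langle\bk_{\ell},\bx_{\ell}-\by_{\ell}\rangle}$, whose modulus is bounded above and below on the bounded set $\cS$, locally uniformly in $\bk_{\ell,I}$. This replaces the divergent $\ell^1$-sum by $\sum_{\bK_{\ell}}{\rm e}^{-2p_I|x_1-y_1|}/|p|^2=O(\log(1/|x_1-y_1|))$, and after integration in $x_1,y_1$ by $\sum_{\bK_{\ell}}p_I^{-3}$, which Lemma \ref{lm-r3} does control. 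This yields Hilbert--Schmidt bounds on the tails of the series defining $\Gamma_{\bk_{\ell}}(z)$, locally uniform in $\zeta$; each partial sum is analytic in Hilbert--Schmidt norm (its kernel is a finite sum of analytic terms, each dominated in ${\rm L}^2\bigl((\re\times\cS)^2\bigr)$), and locally uniform convergence then gives analyticity of the limit. Once the kernel is known to be square-integrable, Cauchy--Schwarz also licenses the Fubini step in your Morera argument, so either route closes; but without the orthogonality input, your single claimed estimate does not exist.
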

In Lemma \ref{lm-an} we  used the obvious notation $\bk_{\ell} = (k_2,k_3)^T \in \C^2$.

\begin{lemma}
\label{lm-lan}
Let $E$, $g$ and $\varepsilon_0$ be as in Proposition \ref{pr-spe}. Then $(\bk_{\ell},\varepsilon) \mapsto \lambda_1(\bk_{\ell},E+{\rm i} \varepsilon)$ and $(\bk_{\ell},\varepsilon) \mapsto P_1(\bk_{\ell},E+{\rm i} \varepsilon)$ are real analytic in $\Omega_g(E) \times (-\varepsilon_0,\varepsilon_0)$, and continuous in $\overline{\Omega}_g(E) \times [-\varepsilon_0,\varepsilon_0]$.
\end{lemma}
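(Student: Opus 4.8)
The plan is to combine the operator analyticity of Lemma \ref{lm-an} with the spectral isolation of $\lambda_1$ furnished by Proposition \ref{pr-spe}(b), and to represent both $\lambda_1$ and $P_1$ through Riesz contour integrals. First I would record that $(\bk_{\ell},\varepsilon)\mapsto\Gamma_{\bk_{\ell}}(E+{\rm i}\varepsilon)$ is real analytic from $\Omega_g(E)\times(-\varepsilon_0,\varepsilon_0)$ into the Hilbert--Schmidt operators, and continuous on the closure. Since $p_I(\bk_{\ell},E)\geq q_-g>0$ on $\overline{\Omega}_g(E)$, we have $\overline{\Omega}_g(E)\subset\cB_E^+$, so that $|\bk_{\ell}|^2>E=\Pre{E+{\rm i}\varepsilon}$ and the real slice $\{(\bk_{\ell},E+{\rm i}\varepsilon)\}$ lies inside the domain $\mathcal{D}$ introduced before Lemma \ref{lm-an}. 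By that lemma and Hartogs' theorem, $(k_2,k_3,w)\mapsto\Gamma_{\bk_{\ell}}(w)$ is jointly holomorphic on $\mathcal{D}$; composing with the real-analytic embedding $(k_2,k_3,\varepsilon)\mapsto(k_2,k_3,E+{\rm i}\varepsilon)$ and invoking the uniform bound of Lemma \ref{lm-r1}(c) yields joint real analyticity in the Hilbert--Schmidt norm.

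Next I would fix an arbitrary base point $(\bk_{\ell}^0,\varepsilon^0)\in\overline{\Omega}_g(E)\times[-\varepsilon_0,\varepsilon_0]$. By Proposition \ref{pr-spe}(b), $\lambda_1(\bk_{\ell}^0,E+{\rm i}\varepsilon^0)$ is simple and lies in $\overline{B}(\Lambda_{\bk_{\ell}^0}(E+{\rm i}\varepsilon^0),c)$, while the rest of the spectrum is contained in the disjoint ball $\overline{B}(0,c)$. Using the strict separation $|\Lambda_{\bk_{\ell}}|>r>4c$ of \eqref{q3} and the continuity of $\Lambda_{\bk_{\ell}}(E+{\rm i}\varepsilon)$, I would pick $r_0\in(2c,r/2)$ and a small neighbourhood $\mathcal{N}$ of the base point such that the single circle $\mathcal{C}_0:=\{\Lambda_{\bk_{\ell}^0}(E+{\rm i}\varepsilon^0)+r_0{\rm e}^{{\rm i}\theta}:\theta\in[0,2\pi)\}$ encloses $\lambda_1(\bk_{\ell},E+{\rm i}\varepsilon)$, avoids $\overline{B}(0,c)$, and stays in $\rho(\Gamma_{\bk_{\ell}}(E+{\rm i}\varepsilon))$ for all $(\bk_{\ell},\varepsilon)\in\mathcal{N}$ (as in \eqref{q6}--\eqref{q6b}). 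On $\mathcal{N}$ we then have
$$ P_1(\bk_{\ell},E+{\rm i}\varepsilon)=\frac{1}{2{\rm i}\pi}\int_{\mathcal{C}_0}(w-\Gamma_{\bk_{\ell}}(E+{\rm i}\varepsilon))^{-1}\,{\rm d}w. $$
For each fixed $w\in\mathcal{C}_0$ the resolvent depends real-analytically on $(\bk_{\ell},\varepsilon)$ (through a norm-convergent Neumann series built from Step~1), uniformly in $w$; hence $P_1$ is real analytic on $\mathcal{N}$, and, the base point being arbitrary, on all of $\Omega_g(E)\times(-\varepsilon_0,\varepsilon_0)$.

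Since $P_1$ has rank one, the identity $\Gamma_{\bk_{\ell}}(E+{\rm i}\varepsilon)P_1=\lambda_1P_1$ gives, after taking traces and using $\mathrm{tr}\,P_1=1$,
$$ \lambda_1(\bk_{\ell},E+{\rm i}\varepsilon)=\mathrm{tr}\big(\Gamma_{\bk_{\ell}}(E+{\rm i}\varepsilon)\,P_1(\bk_{\ell},E+{\rm i}\varepsilon)\big), $$
which is real analytic as the composition of the real-analytic maps of Steps~1--2 with the continuous trace functional on trace-class operators. The same contour representations yield continuity up to $\overline{\Omega}_g(E)\times[-\varepsilon_0,\varepsilon_0]$: the operator family is Hilbert--Schmidt continuous there and the isolation of Proposition \ref{pr-spe}(b) persists on the closed set (shrinking $\varepsilon_0$ if necessary so that \eqref{q3} holds on $[-\varepsilon_0,\varepsilon_0]$), whence the fixed-contour integrals vary continuously.

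The step I expect to be most delicate is the uniform choice of the contour $\mathcal{C}_0$: one must check that a single circle simultaneously encloses $\lambda_1$ and avoids $\overline{B}(0,c)$ throughout a whole neighbourhood, which rests on the strict gap $r>4c$ of \eqref{q3} together with the continuity of $\Lambda_{\bk_{\ell}}(E+{\rm i}\varepsilon)$. Closely related is the careful transfer of the joint complex holomorphy of Lemma \ref{lm-an} to joint real analyticity in the three real parameters, for which one verifies that the real slice $z=E+{\rm i}\varepsilon$ genuinely sits inside $\mathcal{D}$.
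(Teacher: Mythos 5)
Your proposal is correct, but it follows a genuinely different route from the paper's proof, even though both rest on the same three pillars (Lemma \ref{lm-an}, Hartogs' theorem, and the spectral isolation of Proposition \ref{pr-spe}(b)). The paper works at the level of the spectral quantities themselves: it applies Kato's one-variable analytic perturbation theory to the type (A) family of Lemma \ref{lm-an} to get analyticity of $\lambda_1$ and $P_1$ \emph{separately} in each of $k_2$, $k_3$, $z$ on the complexified set $\Omega_g(E,\epsilon)\subset\mathcal{D}$, then invokes Hartogs to pass to joint analyticity --- directly for the scalar $\lambda_1$, and for the operator-valued $P_1$ only in the weak sense, which it then upgrades to genuine analyticity by citing Vasilescu's Proposition 7.6; continuity on $\overline{\Omega}_g(E)\times[-\varepsilon_0,\varepsilon_0]$ is obtained from Proposition \ref{pr-spe}(b)(i) together with Kato, Chap.~II-\S 5.7. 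You instead push Hartogs one level down, to the family $\Gamma_{\bk_{\ell}}(z)$ itself, and then manufacture $P_1$ by a Riesz integral over a \emph{locally fixed} contour and $\lambda_1$ by the trace identity $\lambda_1=\mathrm{tr}(\Gamma P_1)$, which makes the argument more self-contained (no appeal to Vasilescu or to Kato's continuity section, since the fixed-contour representation delivers both analyticity and continuity up to the closure at once). Two remarks on what your route silently assumes. First, applying ``Hartogs'' to the Hilbert--Schmidt-valued map $(k_2,k_3,z)\mapsto\Gamma_{\bk_{\ell}}(z)$ is really a two-step argument: Hartogs applied to the scalar functions $(k_2,k_3,z)\mapsto\langle\Gamma_{\bk_{\ell}}(z)u,v\rangle$ gives joint \emph{weak} holomorphy, and one then needs the standard weak-implies-strong analyticity fact for Banach-space-valued functions (Dunford); this is exactly the subtlety the paper outsources to Vasilescu, so it deserves a sentence rather than being absorbed into the word ``Hartogs''. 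Second, your delicate step --- a single circle $\mathcal{C}_0$ that works on a whole neighbourhood --- does go through: points of $\mathcal{C}_0$ stay at distance greater than $c$ from $\sigma(\Lambda_{\bk_{\ell}}(E+{\rm i}\varepsilon)P_{\bk_{\ell}})=\{0,\Lambda_{\bk_{\ell}}(E+{\rm i}\varepsilon)\}$ as long as $|\Lambda_{\bk_{\ell}}(E+{\rm i}\varepsilon)-\Lambda_{\bk_{\ell}^0}(E+{\rm i}\varepsilon^0)|<r_0-2c$, which continuity of $\Lambda$ and the gap $r>4c$ of \eqref{q3} guarantee, and the footnoted Kato perturbation bound (used for \eqref{q6b}) then controls $(w-\Gamma_{\bk_{\ell}}(E+{\rm i}\varepsilon))^{-1}$ uniformly on $\mathcal{C}_0$; likewise the trace step is sound because for a simple eigenvalue $\Gamma P_1=\lambda_1P_1$, $\mathrm{tr}\,P_1=1$, and on operators of rank at most two the trace is operator-norm continuous.
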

\begin{proof}
For all $\epsilon \in (0,\min(E,E_{\delta}-E,q_-^2 g^2))$ we consider the set $\Omega_g(E,\epsilon)$ (resp. $\overline{\Omega}_g(E,\epsilon)$) of pairs $(\bk_{\ell},z)$ in the Cartesian product of $\mathring{\cB} + {\rm i} \re^2$ and $(E-\epsilon,E+\epsilon) + {\rm i} (-\varepsilon_0,\varepsilon_0)$, satisfying the condition $q_-^2 g^2  < | \bk_{\ell,R} |^2 - | \bk_{\ell,I} |^2 -E < q_+^2 g^2$ (resp. $q_-^2 g^2  \leq | \bk_{\ell,R} |^2 - | \bk_{\ell,I} |^2 -E \leq q_+^2 g^2$). It is clear that $\overline{\Omega}_g(E,\epsilon) \subset \mathcal{D}$ since $| \bk_{\ell,R} |^2 - | \bk_{\ell,I} |^2 \geq E + q_-^2 g^2> E + \epsilon >  \Pre{z} $ for each $(\bk_{\ell},z) \in \overline{\Omega}_g(E,\epsilon)$. Further $\lambda_1(\bk_{\ell},z)$ being simple for every $(\bk_{\ell},z) \in \overline{\Omega}_g(E,\epsilon)$ by Proposition \ref{pr-spe}(b)(i), we may deduce from Lemma \ref{lm-an} that
$\lambda_1$ is analytic in each variable $k_2$, $k_3$ or $z$ separately, when the two other complex variables are fixed. Separate analyticity implies joint analyticity by Hartogs' theorem (see e.g. \cite{H}[Theorem 2.2.8]) and
$(\bk_{\ell},z) \mapsto \lambda_1(\bk_{\ell},z)$ is thus analytic in $\Omega_g(E,\epsilon)$. Similarly, the continuity of $(\bk_{\ell},z) \mapsto \lambda_1(\bk_{\ell},z)$ in $\overline{\Omega}_g(E,\epsilon)$ follows from Proposition \ref{pr-spe}(b)(i) and \cite{K}[Chap. II-\S 5.7].
The case of $P_1$ is treated in a similar way. Indeed, arguing as before, we see that $P_1$ is an analytic function in each variable $k_2$, $k_3$ or $z$ separately, when the two other variables are fixed, whence $(\bk_{\ell},z) \mapsto P_1(\bk_{\ell},z)$ is weakly analytic on $\Omega_g(E,\epsilon)$ by Hartogs' theorem. Since any weakly analytic vector-valued function
on $\Omega_g(E,\epsilon)$ is extendable to an analytic function in the usual sense in a neighborhood of $\Omega_g(E,\epsilon)$ according to \cite{Vas}[Proposition 7.6], the proof is now complete.
\end{proof}

\section{On the invertibility of $1-g \Gamma_{\bk_{\ell}}(E)$, $\bk_{\ell} \in \cB_E$}
\setcounter{equation}{0}
\label{sec-spe}

\subsection{The equation $g \lambda_1(\bk_{\ell},E)-1=0$: proof of Theorem \ref{thm-a}}
In light of Corollary \ref{cor-a1},  we are left with the task of studying the set
\bel{r6}
\mathcal{C}_g(E) = \{ \bk_{\ell} \in \Omega_g(E),\ g \lambda_1(\bk_{\ell},E)-1=0 \}.
\ee
To this purpose we start by describing the behavior of $\lambda_1(\bk_{\ell},E)$ w.r.t. $\bk_{\ell}$.

\begin{pr}
\label{pr-a2}
Let $E$, $W$, $s$ and $g$ be as in Proposition \ref{pr-spe}.
Then the mappings $\bk_{\ell} \mapsto \lambda_1(\bk_{\ell},E)$ and $\bk_{\ell} \mapsto P_1(\bk_{\ell},E)$ are both continuous in $\overline{\Omega}_g(E)$, and real analytic in $\Omega_g(E)$. Moreover there are two constants $g_0=g_0(\delta,s,W)>0$ and $\varkappa=\varkappa(\delta,s,W)>0$, such that we have
$$ \sum_{j=2,3} \left| \frac{\partial \lambda_1}{\partial k_j}(\bk_{\ell},E) \right| \geq \varkappa >0,\ \bk_{\ell} \in \Omega_g(E),\ E \in (0,E_{\delta}),\ g \in (0,g_0).  $$
\end{pr}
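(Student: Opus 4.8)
The continuity and real-analyticity claims follow at once from Lemma~\ref{lm-lan}: the maps $(\bk_{\ell},\varepsilon)\mapsto\lambda_1(\bk_{\ell},E+{\rm i}\varepsilon)$ and $(\bk_{\ell},\varepsilon)\mapsto P_1(\bk_{\ell},E+{\rm i}\varepsilon)$ being real analytic on $\Omega_g(E)\times(-\varepsilon_0,\varepsilon_0)$ and continuous on $\overline{\Omega}_g(E)\times[-\varepsilon_0,\varepsilon_0]$, their restrictions to the slice $\varepsilon=0$ are real analytic on $\Omega_g(E)$ and continuous on $\overline{\Omega}_g(E)$. The real content is therefore the gradient lower bound, which I prove by a Feynman--Hellmann argument showing that $\partial_{k_j}\lambda_1$ is dominated by the derivative of the scalar $\Lambda_{\bk_{\ell}}(E)$, the contribution of $C_{\bk_{\ell}}(E)$ being of strictly lower order in $g$.

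On $\Omega_g(E)\subset\cB_E^+$ one has $p(\bk_{\ell},E)={\rm i}\,p_I(\bk_{\ell},E)$ with $p_I(\bk_{\ell},E)=(|\bk_{\ell}|^2-E)^{1/2}\in(q_-g,q_+g)$, so $\Lambda_{\bk_{\ell}}(E)=\beta\,p_I(\bk_{\ell},E)^{-1}$ is real and positive, with
\[ \frac{\partial\Lambda_{\bk_{\ell}}(E)}{\partial k_j}=-\frac{\beta\,k_j}{(|\bk_{\ell}|^2-E)^{3/2}}=-\frac{\beta\,k_j}{p_I(\bk_{\ell},E)^3},\qquad j=2,3. \]
Since $\sum_{j}|k_j|\geq|\bk_{\ell}|$ and $|\bk_{\ell}|^2=E+p_I(\bk_{\ell},E)^2>q_-^2g^2$, while $p_I(\bk_{\ell},E)<q_+g$, this yields
\[ \sum_{j=2,3}\left|\frac{\partial\Lambda_{\bk_{\ell}}(E)}{\partial k_j}\right|\geq\frac{\beta\,|\bk_{\ell}|}{p_I(\bk_{\ell},E)^3}>\frac{\beta\,q_-}{q_+^3}\,g^{-2}, \]
a lower bound of order $g^{-2}$ that is uniform in $E\in(0,E_\delta)$.

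As $\Gamma_{\bk_{\ell}}(E)$ is selfadjoint on $\cB_E^+$ by Lemma~\ref{lm-r1}(b), with simple isolated eigenvalue $\lambda_1(\bk_{\ell},E)$ and a locally normalized analytic eigenvector $\psi_1$, the Feynman--Hellmann formula gives $\partial_{k_j}\lambda_1=\langle(\partial_{k_j}\Gamma_{\bk_{\ell}}(E))\psi_1,\psi_1\rangle_{\cH}$, and inserting \eqref{r8ag},
\[ \frac{\partial\lambda_1}{\partial k_j}=\frac{\partial\Lambda_{\bk_{\ell}}(E)}{\partial k_j}\,\langle P_{\bk_{\ell}}\psi_1,\psi_1\rangle_{\cH}+\Lambda_{\bk_{\ell}}(E)\,\langle(\partial_{k_j}P_{\bk_{\ell}})\psi_1,\psi_1\rangle_{\cH}+\langle(\partial_{k_j}C_{\bk_{\ell}}(E))\psi_1,\psi_1\rangle_{\cH}. \]
The weight $\langle P_{\bk_{\ell}}\psi_1,\psi_1\rangle_{\cH}=|\langle\psi_1,\varphi_{\bk_{\ell}}\rangle_{\cH}|^2$ stays bounded below by a fixed positive constant: writing $\psi_1=a\varphi_{\bk_{\ell}}+\phi$ with $\phi\perp\varphi_{\bk_{\ell}}$ and applying $1-P_{\bk_{\ell}}$ to the eigenvalue equation gives $\phi=\lambda_1^{-1}(1-P_{\bk_{\ell}})C_{\bk_{\ell}}(E)\psi_1$, hence $\|\phi\|\leq c/|\lambda_1|$; since $|\lambda_1|\geq|\Lambda_{\bk_{\ell}}(E)|-c>3c$ on $\Omega_g(E)$ (using Proposition~\ref{pr-spe}(b)(i) and $|\Lambda_{\bk_{\ell}}(E)|>r>4c$ from \eqref{q3}, valid because $s>s_0$), we get $\|\phi\|<1/3$ and $|a|^2=1-\|\phi\|^2>8/9$.

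It remains to check that the last two terms are of lower order in $g$. From $\partial_{k_j}\varphi_{\bk_{\ell}}={\rm i}\,x_j\varphi_{\bk_{\ell}}$, with $x_j$ bounded on the compact cell $\cS$ and $\|\varphi_{\bk_{\ell}}\|=1$, the operator $\partial_{k_j}P_{\bk_{\ell}}$ is bounded uniformly in $\bk_{\ell}$, so the middle term is $O(g^{-1})$ because $\Lambda_{\bk_{\ell}}(E)<\beta(q_-g)^{-1}$. The most technical point is the uniform estimate $\|\partial_{k_j}C_{\bk_{\ell}}(E)\|_{HS}=O(1)$ on $\cB_E^+$: differentiating the kernel \eqref{r9g} termwise in $\bk_{\ell}$ produces, for each $\bK_{\ell}\in\cL^\perp\setminus\{0\}$, extra factors $|x_1-y_1|$, bounded transversal factors $(x_j-y_j)$, and factors $\partial_{k_j}p(\bk_{\ell}+\bK_{\ell},E)$; bounding the corresponding Hilbert--Schmidt norms by Young's inequality in the $x_1$-variable (which converts the surviving exponential into an inverse power of $p_I(\bk_{\ell}+\bK_{\ell},E)$) leaves a series in $\bK_{\ell}$ that converges by Lemma~\ref{lm-r3}, uniformly in $\bk_{\ell}\in\cB_E^+$ and $E\in(0,E_\delta)$, exactly as in the estimates underlying Lemma~\ref{lm-r1}. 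Collecting the three contributions through the triangle inequality,
\[ \sum_{j=2,3}\left|\frac{\partial\lambda_1}{\partial k_j}(\bk_{\ell},E)\right|\geq\frac{8}{9}\,\frac{\beta\,q_-}{q_+^3}\,g^{-2}-C_1\,g^{-1}-C_2, \]
with $C_1,C_2>0$ depending only on $\delta,s,W$. Since the leading term is of order $g^{-2}$ and the remainder is $O(g^{-1})$, one may fix $g_0=g_0(\delta,s,W)\in(0,s^{-1}c^{-1}]$ and $\varkappa=\varkappa(\delta,s,W)>0$ so that the right-hand side exceeds $\varkappa$ for all $\bk_{\ell}\in\Omega_g(E)$, $E\in(0,E_\delta)$ and $g\in(0,g_0)$. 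The uniform control of $\partial_{k_j}C_{\bk_{\ell}}(E)$ is the only genuinely laborious step; the rest is bookkeeping.
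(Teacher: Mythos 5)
Your reduction of the regularity claims to Lemma \ref{lm-lan}, your overlap bound $|\langle\psi_1,\varphi_{\bk_{\ell}}\rangle|^2>8/9$ (obtained from $\lambda_1\phi=(1-P_{\bk_{\ell}})C_{\bk_{\ell}}(E)\psi_1$ and $|\lambda_1|>3c$ --- a cleaner route than the paper's, which uses the Riesz projection comparison \eqref{q7}, \eqref{r23b}), the $O(g^{-1})$ bound on the term containing $\partial_{k_j}P_{\bk_{\ell}}$ (a contribution the paper instead cancels exactly, via the identity displayed after \eqref{n5}), and the $O(1)$ bound on the $\bK_{\ell}\neq0$ part of $\partial_{k_j}C_{\bk_{\ell}}(E)$ (this is essentially the paper's estimate \eqref{n9} of $S_*$) are all correct. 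The gap is the claim $\|\partial_{k_j}C_{\bk_{\ell}}(E)\|_{HS}=O(1)$ itself: you differentiate only the terms with $\bK_{\ell}\in\cL^{\perp}\setminus\{0\}$, but by \eqref{r8ag}--\eqref{r9g} the kernel of $C_{\bk_{\ell}}(E)$ also contains the $\bK_{\ell}=0$ difference
$$ \beta W(\bx)W(\by)\,\frac{{\rm e}^{-p_I(\bk_{\ell},E)|x_1-y_1|}-1}{p_I(\bk_{\ell},E)}\,{\rm e}^{{\rm i}\langle\bk_{\ell},\bx_{\ell}-\by_{\ell}\rangle}, $$
and this is exactly where the difficulty of the whole proposition sits.

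Differentiating this term in $k_j$ produces, besides a harmless phase contribution, the kernel $-\beta W(\bx)W(\by)\,k_j\,p_I(\bk_{\ell},E)^{-3}\,\kappa(x_1,y_1)\,{\rm e}^{{\rm i}\langle\bk_{\ell},\bx_{\ell}-\by_{\ell}\rangle}$, where $\kappa$ is precisely the function \eqref{n10b}. Since $\kappa(x_1,y_1)\asymp\min\bigl(1,(p_I|x_1-y_1|)^2\bigr)$, the Hilbert--Schmidt norm of this piece is of order $|k_j|\slash p_I$ already for compactly supported $W$, and under the sole hypothesis \eqref{rr0} --- which gives $\|W\|_{\cH_1}<\infty$ but not $\|W\|_{\cH_2}<\infty$ unless $\epsilon>2$ --- it can be as large as $|k_j|\,p_I^{-2+\epsilon\slash 2}$; in no case is it $O(1)$, and on $\Omega_g(E)$ one has $p_I\in(q_-g,q_+g)$ while $|k_j|$ may be of size $E^{1\slash 2}$. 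Consequently your final display, with remainder $C_1g^{-1}+C_2$, does not follow from what you proved. The conclusion is recoverable, because all these sizes remain negligible relative to the leading term $\beta|k_j|p_I^{-3}$, which carries the same factor $|k_j|$; but that requires the missing estimate: either the paper's device --- keep this contribution inside $S_0$, use $|\kappa|\le p_I(|x_1|+|y_1|)$, and bound the bilinear form against $\psi_1$ by $2\beta|k_j|p_I^{-2}\|W\|_{\cH_1}$ as in \eqref{n10}--\eqref{n11}, smaller than the leading term by a factor $O(g)$ --- or an interpolation bound $\kappa\le(p_I|x_1-y_1|)^{2\theta}$ with $\theta<\min(1,1\slash 2+\epsilon\slash 4)$, giving $O(|k_j|\,p_I^{2\theta-3})$. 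Only once one of these patches is inserted, the comparison carried out keeping the common factor $|k_j|$, the sum over $j$ taken, and $|\bk_{\ell}|\ge q_-g$ invoked, does your inequality become a proof.
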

The proof of Proposition \ref{pr-a2} being rather technical and lengthy, it is postponed to \S \ref{subsec-proofpr-a2} below. Armed with Proposition \ref{pr-a2} we may now complete the proof of Theorem \ref{thm-a}.

\paragraph{Proof of Theorem \ref{thm-a}.}
Let us first show that $\mathcal{C}_g(E) \neq \emptyset$. To see this we consider any continuous curve $\mathcal{C}$ in $\cB_E^+$ joining two points $\bk_{\ell}^{\pm}$ belonging to each of the two connex components of the boundary $\partial \Omega_g(E)$. Since $\bk_{\ell} \mapsto \lambda_1(\bk_{\ell},E)$ is continuous in $\mathcal{C}$ from
the first part of Proposition \ref{pr-a2}, then the intermediate value theorem combined with Proposition \ref{pr-spe}(c) entails that there is at least one $\tilde{\bk}_t \in \mathcal{C} \cap \Omega_g(E)$ satisfying $\lambda_1(\tilde{\bk}_t,E)=1$. Hence $\tilde{\bk}_t \in \mathcal{C}_g(E)$.

Next, $\bk_{\ell} \mapsto \lambda_1(\bk_{\ell},E)$ being a real analytic submersion from $\Omega_g(E)$ into $\re$, according to Proposition \ref{pr-a2}, the closed set $\mathcal{C}_g(E)$ is thus necessarily a real analytic submanifold with no endpoint. In other words, $\mathcal{C}_g(E)$ is a closed, regular, analytic curve in $\Omega_g(E)$.

Further, if $\mathcal{C}_g(E)$ were homotopic to a point in $\Omega_g(E)$, then the mapping $\bk_{\ell} \mapsto \lambda_1(\bk_{\ell},E)$ would admit an extremum in the compact set $\Delta_g(E)$ enclosed by $\mathcal{C}_g(E)$, and its gradient would thus vanish in at least one point of $\Delta_g(E)$. This claim being in contradiction to Proposition \ref{pr-a2}, $\mathcal{C}_g(E)$ is thus not homotopic to a point in $\Omega_g(E)$. 
This completes the proof.\\

\subsection{Proof of Proposition \ref{pr-a2}}
\label{subsec-proofpr-a2}
Since $\bk_{\ell} \mapsto \lambda_1(\bk_{\ell},E)$ is  analytic in $\Omega_g(E)$ by Lemma \ref{lm-lan}, we have
$$
\frac{\partial \lambda_1}{\partial k_j}(\bk_{\ell},E) = \int_{( \re \times \cS )^2} \frac{\partial \gamma_{\bk_{\ell}}}{\partial k_j}(\bx,\by,E) \psi_1(\bx) \psi_1(\by) {\rm d} \bx {\rm d} \by,\ \bk_{\ell} \in \Omega_g(E),\ j=2,3,
$$
from the Feynman-Hellmann formula, where $\psi_1 = \psi_1(\bk_{\ell}, E)$ denotes a $\cH$-normalized and real valued eigenfunction of $\Gamma_{\bk_{\ell}}(E)$ associated to $\lambda_1(\bk_{\ell},E)$.
From this and \eqref{r6b}-\eqref{rr1} then follows that
\bea
\frac{\partial \lambda_1}{\partial k_j}(\bk_{\ell},E)
& = & -\int_{( \re \times \cS )^2} \beta W(\bx) W(\by) \sum_{\bK_{\ell} \in \cL^{\perp}}
\frac{k_j + K_j}{p(\bk_{\ell}+\bK_{\ell},E)^3}(p(\bk_{\ell}+\bK_{\ell},E) |x_1-y_1| + 1)  \nonumber \\
& & \hspace{1.5cm} \times {\rm e}^{-p(\bk_{\ell}+\bK_{\ell},E) |x_1 - y_1|} {\rm e}^{{\rm i} \langle \bk_{\ell}+\bK_{\ell}, \bx_{\ell}-\by_{\ell} \rangle} \psi_1(\bx) \psi_1(\by) {\rm d} \bx {\rm d} \by, \label{n5}
\eea
for all $\bk_{\ell} \in \Omega_g(E)$ and $j=2,3$, where we used the identity
\beas
& & \sum_{\bK_{\ell} \in \cL^{\perp}} \int_{(\re \times \cS)^2} W(\bx) W(\by) \frac{{\rm e}^{-p(\bk_{\ell}+\bK_{\ell},E) |x_1 - y_1|}}{p(\bk_{\ell}+\bK_{\ell},E)} {\rm e}^{{\rm i} \langle \bk_{\ell} + \bK_{\ell} , \bx_{\ell} - \by_{\ell} \rangle}  (x_j-y_j) \psi_1(\bx) \psi_1(\by)
{\rm d} \bx {\rm d} \by  \\
& = & \langle \Gamma_{\bk_{\ell}}(E)  \psi_1, x_j \psi_1 \rangle_{\cH} - \langle y_j \psi_1, \Gamma_{\bk_{\ell}}(E) \psi_1 \rangle_{\cH}=0.
\eeas
The strategy consists of splitting the sum in the r.h.s. of \eqref{n5} into two parts, $S_0$ and $S_*$, corresponding respectively to $\bK_{\ell}=0$ and $\bK_{\ell} \neq 0$. We shall treat $S_0$ and $S_*$ separately.

Let us start dealing with $S_*$. First, by performing the change of variable $u=y_1-x_1$ in the following integral, we notice for $m=0,1$, and a.e. $y_1 \in \re$, that
$$
\int_{\re} |x_1 - y_1|^m {\rm e}^{-p(\bk_{\ell}+\bK_{\ell},E) |x_1 - y_1|} W(x_1,\bx_{\ell}) \psi_1(x_1,\bx_{\ell}) {\rm d} x_1
= \left( f_m \star W(.,\bx_{\ell}) \psi_1(.,\bx_{\ell}) \right)(y_1)
$$
where $f_m(u):= |u|^m {\rm e}^{-p(\bk_{\ell}+\bK_{\ell},E) |u|}$, $u \in \re$. Since $f_m \in {\rm L}^1(\re)$
with $\| f_m \|_{{\rm L}^1(\re)}=2 p(\bk_{\ell}+\bK_{\ell},E)^{-m+1}$, this yields
\beas
 \left\| \int_{\re} |x_1 - y_1|^m {\rm e}^{-p(\bk_{\ell}+\bK_{\ell},E) |x_1 - y_1|} W(\bx) \psi_1(\bx) {\rm d} x_1 \right\|_{{\rm L}^2(\re)} 
\leq \frac{2 \| W(.,\bx_{\ell}) \|_{{\rm L}^{\infty}(\re)} \| \psi_1(.,\bx_{\ell}) \|_{{\rm L}^2(\re)}}{p(\bk_{\ell}+\bK_{\ell},E)^{m+1}},
\eeas
and hence
\beas
& & \left| \int_{\re^2} |x_1 - y_1|^m {\rm e}^{-p(\bk_{\ell}+\bK_{\ell},E) |x_1 - y_1|} W(\bx) W(\by) \psi_1(\bx) \psi_1(\by) {\rm d} x_1 {\rm d} y_1 \right| \\
& \leq & \frac{2}{p(\bk_{\ell}+\bK_{\ell},E)^{m+1}} \| W(.,\bx_{\ell}) \|_{{\rm L}^{\infty}(\re)}
\| W(.,\by_{\ell}) \|_{{\rm L}^{\infty}(\re)} \| \psi_1(.,\bx_{\ell}) \|_{{\rm L}^2(\re)} \| \psi_1(.,\by_{\ell}) \|_{{\rm L}^2(\re)}.
\eeas
Thus, by integrating the above inequality w.r.t. $\bx_{\ell}$ and $\by_{\ell}$ over $\cS$ and using the normalization condition $\| \psi_1 \|_{\cH}=1$, we find out that
\bea
& & \left| \int_{(\re \times \cS)^2} |x_1 - y_1|^m {\rm e}^{-p(\bk_{\ell}+\bK_{\ell},E) |x_1 - y_1|} W(\bx) W(\by) \psi_1(\bx) \psi_1(\by) {\rm e}^{{\rm i} \langle \bk_{\ell}+\bK_{\ell}, \bx_{\ell}-\by_{\ell} \rangle} {\rm d} \bx {\rm d} \by \right| \nonumber \\
& \leq & \frac{2}{p(\bk_{\ell}+\bK_{\ell},E)^{m+1}} \| W \|_{{\rm L}^{\infty}(\re,{\rm L}^2(\cS))}^2.
\label{n8}
\eea
Further, since $|k_j+K_j| \leq (p(\bk_{\ell}+\bK_{\ell},E)^2 +E)^{1 \slash 2} \leq  p(\bk_{\ell}+\bK_{\ell},E)+ E_{\delta}^{1 \slash 2}$ for all $\bK_{\ell} \in \cL^{\perp}$, $E \in (0,E_{\delta})$ and $j=2,3$, it follows from \eqref{n8} and Lemma \ref{lm-r3} that
\bel{n9}
| S_*| \leq 4 \beta ( \alpha_3(\delta)+ \alpha_4(\delta) E_{\delta}^{1 \slash 2}) \| W \|_{{\rm L}^{\infty}(\re,{\rm L}^2(\cS))}^2 :=\sigma_*(\delta,W).
\ee
We turn now to estimating $S_0$, which is brought into the form
\bea
S_0 & = & \beta \frac{k_j}{p(\bk_{\ell},E)^3} \left( \int_{( \re \times \cS )^2} W(\bx) W(\by) \kappa(x_1,y_1) {\rm e}^{{\rm i} \langle \bk_{\ell}, \bx_{\ell}-\by_{\ell} \rangle} \psi_1(\bx) \psi_1(\by) {\rm d} \bx {\rm d} \by  \right. \nonumber \\
& & \hspace{2.0cm} \left. -\int_{( \re \times \cS )^2} W(\bx) W(\by) {\rm e}^{{\rm i} \langle \bk_{\ell}, \bx_{\ell}-\by_{\ell} \rangle} \psi_1(\bx) \psi_1(\by) {\rm d} \bx {\rm d} \by  \right), \label{n10}
\eea
where
\bel{n10b}
\kappa(x_1,y_1) :=  1 - {\rm e}^{-p(\bk_{\ell},E) |x_1 - y_1|} - p(\bk_{\ell},E) |x_1-y_1|  {\rm e}^{-p(\bk_{\ell},E) |x_1 - y_1|}.
\ee
Bearing in mind \eqref{r2b} and \eqref{r8}, we may express the second term in the r.h.s. of \eqref{n10} as
\bel{n11a}
\int_{( \re \times \cS )^2} W(\bx) W(\by) {\rm e}^{{\rm i} \langle \bk_{\ell}, \bx_{\ell}-\by_{\ell} \rangle} \psi_1(\bx) \psi_1(\by) {\rm d} \bx {\rm d} \by = | \langle \varphi_{\bk_{\ell}} , \psi_1 \rangle |^2.
\eeq
The benefits of \eqref{n11a} is the following lower estimate:
\bel{r23}
| \langle \varphi_{\bk_{\ell}}, \psi_1 \rangle | \geq \left( 1 - \frac{4(s-1)}{s(s-2)} \right)^{1 \slash 2},\ \bk_{\ell} \in \Omega_g(E).
\ee
Indeed, we know from \eqref{q7} that the spectral
projection $P_1=\langle .,\psi_1 \rangle \psi_1$ satisfies
$\| P_1 - P_{\bk_{\ell}} \|_{\mathcal{B}(\cH)}   \leq c \slash (r_0-c)$ for any $\bk_{\ell} \in \Omega_g(E)$ and
$r_0 \in (2c , r \slash 2)$, with $r=s(s-2) \slash (s-1) c$.
Taking $r_0 = c + r \slash 4$ we obtain
\bel{r23b}
\| P_1 - P_{\bk_{\ell}} \| \leq \frac{4(s-1)}{s(s-2)},\ \bk_{\ell} \in \Omega_g(E).
\ee
Moreover, since
$| \langle \varphi_{\bk_{\ell}} , \psi_1 \rangle |^2  =  \langle \psi_1 , \varphi_{\bk_{\ell}} \rangle \langle \varphi_{\bk_{\ell}} , \psi_1 \rangle=\langle \psi_1, P_{\bk_{\ell}} \psi_1\rangle=1 + \langle \psi_1 , (P_{\bk_{\ell}}-P_1) \psi_1 \rangle$,
we get that
$ | \langle \varphi_{\bk_{\ell}} , \psi_1 \rangle |^2 \geq 1 - \| P_{\bk_{\ell}} - P_1 \|_{\mathcal{B}(\cH)} $,
which, together with \eqref{r23b}, entails \eqref{r23}.

Let us now study the first term in the r.h.s. of \eqref{n10}. Taking into account that $1-{\rm e}^{-u} \in [0,u]$ for every $u \geq 0$, we get that $| \kappa(x_1,y_1) | \leq p(\bk_{\ell},E) (|x_1|+|y_1|)$ for all $(x_1,y_1) \in \re^2$, and hence
\bel{n11}
\left|  \int_{( \re \times \cS )^2} W(\bx) W(\by) \kappa(x_1,y_1) {\rm e}^{{\rm i} \langle \bk_{\ell}, \bx_{\ell}-\by_{\ell} \rangle} \psi_1(\bx) \psi_1(\by) {\rm d} \bx {\rm d} \by \right|
\leq 2 p(\bk_{\ell},E) \| W \|_{\cH_1},
\ee
according to \eqref{r2b}. Finally, putting \eqref{n5}, \eqref{n9}-\eqref{n11a} and \eqref{n11} together, we find out that
$$
\left| \frac{\partial \lambda_1}{\partial k_j}(\bk_{\ell},E) + \beta \frac{k_j}{p(\bk_{\ell},E)^3} | \langle \varphi , \psi_1 \rangle |^2 \right|
\leq 2 \beta \frac{|k_j|}{p(\bk_{\ell},E)^{2}} \| W \|_{\cH_1} + \sigma_*(\delta,W), $$
for all $\bk_{\ell} \in \Omega_g(E)$.
This, combined with \eqref{r21dia}-\eqref{r21star} and \eqref{r23}, yields
$$\left|  \frac{\partial \lambda_1}{\partial k_j}(\bk_{\ell},E) \right|
\geq \frac{ |k_j|}{\beta g^{3}} \sigma_0(s,W,g) - \sigma_*(\delta,W),\ \bk_{\ell} \in \Omega_g(E),
$$
where
\bel{n13}
\sigma_0(s,W,g) := \beta^{-1} \left( \frac{s-2}{s-1} \right)^3 \left( 1 - \frac{4(s-1)}{s(s-2)} \right) -2 \left( \frac{s}{s-1} \right)^2 \| W \|_{\cH_1} g.
\ee
As a consequence we have
$\sum_{j=2}^3 \left|  \frac{\partial \lambda_1}{\partial k_j}(\bk_{\ell},E) \right|
\geq \beta^{-1} |\bk_{\ell}| g^{-3} \sigma_0(s,W,g) - 2 \sigma_*(\delta,W).
$
So, for all $\bk_{\ell} \in \Omega_g(E)$, we have 
$\sum_{j=2}^3 \left|  \frac{\partial \lambda_1}{\partial k_j}(\bk_{\ell},E) \right|
\geq  \frac{s-1}{s} g^{-2} \sigma_0(s,W,g) - 2 \sigma_*(\delta,W)$
since $| \bk_{\ell} | \geq q_-g \geq ((s-1) \slash s) \beta g$ according to \eqref{r21dia}-\eqref{r21star}.
This and \eqref{n13} entails Proposition \ref{pr-a2}.

\section{Characterization of the guided states: proofs of Theorem \ref{thm-sw}} 
\label{sec-sw}

We proceed with a succession of six lemmas, the first five listed leading to the conclusion of Theorem \ref{thm-sw}, while the last one involves Corollary \ref{cor-sw}. 

\subsection{Quasi-momentum $\bk_{\ell}$ associated to guided states}

We first collect the following preliminary result that will be needed in the proof of Lemma \ref{lm-sw2}.

\begin{lemma}
\label{lm-sw1}
Let $E$, $g$ and $W$ be as in Theorem \ref{thm-a}. Then any guided state $u$ with energy $E$, associated to $\bk_{\ell} \in \cB_E$, verifies
$u = g R_0(\bk_{\ell},E \pm {\rm i} 0) W^2 u$ and, consequently, $Wu\not=0$.
\end{lemma}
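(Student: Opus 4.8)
The plan is to start from the eigenvalue equation satisfied by $u$ and to invert $H_0(\bk_{\ell})-E$, distinguishing whether $\bk_{\ell}$ lies in $\cB_E^+$ or in $\cB_E^-$. Since $u$ is a guided state with energy $E$ associated to $\bk_{\ell}$, we have $H(\bk_{\ell})u=(H_0(\bk_{\ell})-gW^2)u=Eu$. As $W\in{\rm L}^{\infty}(\re\times\cS)$, the vector $gW^2u$ belongs to $\cH$, hence $H_0(\bk_{\ell})u=(E+gW^2)u\in\cH$ and $u\in{\rm dom}\,H_0(\bk_{\ell})$, which gives
\begin{equation}
(H_0(\bk_{\ell})-E)u=gW^2u. \label{eq-eve}
\end{equation}
By separation of variables $H_0(\bk_{\ell})$ is unitarily equivalent to $-\partial_{x_1}^2\otimes I+I\otimes A_{\bk_{\ell}}$, where $A_{\bk_{\ell}}$ is the Bloch Laplacian on $\cS$ with discrete spectrum $\{|\bk_{\ell}+\bK_{\ell}|^2\}_{\bK_{\ell}\in\cL^{\perp}}$; since $-\partial_{x_1}^2$ on ${\rm L}^2(\re)$ has purely absolutely continuous spectrum, so does $H_0(\bk_{\ell})$, and in particular its point spectrum is empty and $\sigma(H_0(\bk_{\ell}))=[|\bk_{\ell}|^2,+\infty)$.

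First I would dispose of the case $\bk_{\ell}\in\cB_E^+$. Here $E<|\bk_{\ell}|^2=\inf\sigma(H_0(\bk_{\ell}))$, so $E$ belongs to the resolvent set of $H_0(\bk_{\ell})$, the operator $R_0(\bk_{\ell},E)=R_0(\bk_{\ell},E\pm{\rm i}0)$ is bounded, and applying it to \eqref{eq-eve} yields $u=gR_0(\bk_{\ell},E\pm{\rm i}0)W^2u$ at once.

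The delicate case is $\bk_{\ell}\in\cB_E^-$, for which $E$ is embedded in the absolutely continuous spectrum of $H_0(\bk_{\ell})$ (note that, by \eqref{r6b}, $E$ is not one of the thresholds $|\bk_{\ell}+\bK_{\ell}|^2$, $\bK_{\ell}\neq0$). For $\varepsilon>0$ I would rewrite \eqref{eq-eve} as $(H_0(\bk_{\ell})-E\mp{\rm i}\varepsilon)u=gW^2u\mp{\rm i}\varepsilon u$ and apply the bounded operator $R_0(\bk_{\ell},E\pm{\rm i}\varepsilon)$ to get
\begin{equation}
u=gR_0(\bk_{\ell},E\pm{\rm i}\varepsilon)W^2u\mp{\rm i}\varepsilon R_0(\bk_{\ell},E\pm{\rm i}\varepsilon)u,\quad\varepsilon>0. \label{eq-approx}
\end{equation}
Letting $\varepsilon\downarrow0$, the spectral theorem gives $\|\varepsilon R_0(\bk_{\ell},E\pm{\rm i}\varepsilon)u\|_{\cH}^2=\int\frac{\varepsilon^2}{(\lambda-E)^2+\varepsilon^2}\,{\rm d}\|P_\lambda u\|_{\cH}^2\to\|P_{\{E\}}u\|_{\cH}^2=0$, the point spectrum of $H_0(\bk_{\ell})$ being empty, so the last term of \eqref{eq-approx} vanishes in $\cH$. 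For the first term, the decay assumption \eqref{rr0} yields $|W^2u|\leq C(1+|x_1|)^{-(3+\epsilon)}|u|$ with $u\in\cH$, hence $W^2u\in\cH_\tau$ for every $\tau\in(1/2,3+\epsilon]$; the Limiting Absorption Principle established in Appendix B then provides the convergence $R_0(\bk_{\ell},E\pm{\rm i}\varepsilon)W^2u\to R_0(\bk_{\ell},E\pm{\rm i}0)W^2u$ in $\cH_{-\tau}$. Passing to the limit in \eqref{eq-approx}, and using $\cH\hookrightarrow\cH_{-\tau}$, we obtain $u=gR_0(\bk_{\ell},E\pm{\rm i}0)W^2u$.

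Finally, $Wu\neq0$ follows by contradiction: if $Wu=0$ then $W^2u=W(Wu)=0$, so \eqref{eq-eve} reduces to $(H_0(\bk_{\ell})-E)u=0$, making $u$ an eigenvector of $H_0(\bk_{\ell})$ for the eigenvalue $E$; since $H_0(\bk_{\ell})$ has empty point spectrum this forces $u=0$, contradicting the fact that a guided state is nonzero. The main obstacle is the embedded case $\bk_{\ell}\in\cB_E^-$: one must simultaneously justify the vanishing of $\varepsilon R_0(\bk_{\ell},E\pm{\rm i}\varepsilon)u$ through the absence of point spectrum, and the convergence of $R_0(\bk_{\ell},E\pm{\rm i}\varepsilon)W^2u$ in the weighted topology through the LAP, after checking that the decay \eqref{rr0} of $W$ places $W^2u$ in a weighted space $\cH_\tau$ with $\tau>1/2$.
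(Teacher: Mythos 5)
Your proof is correct and follows essentially the same route as the paper: the approximate resolvent identity $u = g R_0(\bk_{\ell},E \pm {\rm i} \varepsilon) W^2 u \mp {\rm i} \varepsilon R_0(\bk_{\ell},E \pm {\rm i} \varepsilon) u$, the observation that $W^2 u \in \cH_{3+\epsilon}$, and the Appendix B LAP (Proposition \ref{pr-lapz}) to pass to the limit, with the cases $\bk_{\ell} \in \cB_E^{+}$ and $\bk_{\ell} \in \cB_E^{-}$ treated separately. Your minor variants --- inverting $H_0(\bk_{\ell})-E$ directly when $\bk_{\ell} \in \cB_E^+$ since $E < |\bk_{\ell}|^2 = \inf \sigma(H_0(\bk_{\ell}))$, and obtaining norm (rather than merely weak) convergence of $\varepsilon R_0(\bk_{\ell},E \pm {\rm i} \varepsilon) u \to 0$ via the spectral theorem and the absence of point spectrum of $H_0(\bk_{\ell})$ --- are sound and consistent with what the paper invokes through Proposition \ref{pr-lapz}(a),(b).
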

\begin{proof}
The guided state 
$u$ being solution to the equation
$(H_0(\bk_{\ell})-(E \pm  {\rm i} \varepsilon)) u = (g W^2 \mp {\rm i} \varepsilon) u$
for every $\varepsilon \in \re_+$, we have
\bel{sw1}
u = g  R_0(\bk_{\ell},E \pm {\rm i} \varepsilon) W^2 u \mp {\rm i} \varepsilon R_0(\bk_{\ell},E \pm {\rm i} \varepsilon) u,\ \varepsilon >0.
\ee
If $\bk_{\ell} \in \cB_E^+$ we obtain the desired result directly from Proposition \ref{pr-lapz} (a) and (b), by taking the limit in \eqref{sw1} as $\varepsilon \downarrow 0$. Doing the same for $\bk_{\ell} \in \cB_E^-$ after
noticing that $W^2 u \in \cH_{3 + \epsilon}$,
we find that $u = g R_0(E \pm {\rm i} 0) u$ in $\cH_{-3-\epsilon}$ weakly.
Further, $u$ being in $\cH$, the equality holds true in $\cH$ and the proof is complete.
\end{proof}

Notice from Proposition \ref{pr-lapz} that Lemma \ref{lm-sw1} simply reads $u=g R_0(\bk_{\ell},E) u$ for $\bk_{\ell} \in \cB_E^+$.

Armed with Lemma \ref{lm-sw1} we may now characterize the quasi-momenta corresponding to actual guided states.

\begin{lemma}
\label{lm-sw2}
Let $E$, $g$ and $W$ be the same as in Theorem \ref{thm-a}, and assume that there exists a guided state with energy $E$ associated to $\bk_{\ell} \in \cB$.
Then we have either $\bk_{\ell} \in \mathcal{C}_g(E)$ or $| \bk_{\ell} | =  E^{1 \slash 2}$.
\end{lemma}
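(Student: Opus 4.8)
The plan is to analyze the identity $u = g R_0(\bk_{\ell},E \pm {\rm i} 0) W^2 u$ furnished by Lemma \ref{lm-sw1}, distinguishing the two cases $\bk_{\ell} \in \cB_E$ (i.e. $|\bk_{\ell}|^2 \neq E$) and $|\bk_{\ell}|^2 = E$. The second case directly gives the alternative $|\bk_{\ell}| = E^{1/2}$, so the real content is to show that when $\bk_{\ell} \in \cB_E$, one necessarily has $\bk_{\ell} \in \mathcal{C}_g(E)$. First I would apply $W$ to both sides of Lemma \ref{lm-sw1} to obtain $Wu = g W R_0(\bk_{\ell},E \pm {\rm i} 0) W^2 u = g \Gamma_{\bk_{\ell}}(E \pm {\rm i} 0)(Wu)$, using the definition \eqref{r4} of $\Gamma_{\bk_{\ell}}(z) = W R_0(\bk_{\ell},z) W$. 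Since Lemma \ref{lm-sw1} also guarantees $Wu \neq 0$, this exhibits $Wu$ as a nonzero eigenvector of $g\Gamma_{\bk_{\ell}}(E \pm {\rm i} 0)$ with eigenvalue $1$, so that $1 \in \sigma(g\Gamma_{\bk_{\ell}}(E \pm {\rm i} 0))$, i.e. $1 - g\Gamma_{\bk_{\ell}}(E \pm {\rm i} 0)$ is singular.

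Next I would pass to the limit $\varepsilon \downarrow 0$ and reconcile the $\pm {\rm i} 0$ boundary values with the self-adjoint operator $\Gamma_{\bk_{\ell}}(E)$. For $\bk_{\ell} \in \cB_E^+$ this is immediate: by Lemma \ref{lm-r1}(d) both limits coincide with $\Gamma_{\bk_{\ell}}(E)$, so $1 - g\Gamma_{\bk_{\ell}}(E)$ is singular. For $\bk_{\ell} \in \cB_E^-$ the point is that $|\bk_{\ell}| < E^{1/2}$ already forces the second alternative to be irrelevant, but one must still argue that such $\bk_{\ell}$ cannot support a guided state at all for small $g$. The cleanest route is to combine the singularity of $1 - g\Gamma_{\bk_{\ell}}(E + {\rm i} 0)$ with the localization of the spectrum of $\Gamma_{\bk_{\ell}}$: by \eqref{ps1} and Lemma \ref{lm-r1}(c) the spectrum sits in $\overline{B}(0,c) \cup \overline{B}(\Lambda_{\bk_{\ell}}(E),c)$, and since $gc < 1$ the ball $\overline{B}(0,c)$ cannot contain the value $1/g$; hence the eigenvalue $1/g$ must lie in $\overline{B}(\Lambda_{\bk_{\ell}}(E + {\rm i} 0),c)$, which by Proposition \ref{pr-spe}(a),(b) and the definition \eqref{r21dia} of $\Omega_g(E)$ pins $\bk_{\ell}$ into $\overline{\Omega}_g(E) \subset \cB_E^+$. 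This contradicts $\bk_{\ell} \in \cB_E^-$, ruling out guided states with $\bk_{\ell} \in \cB_E^-$.

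Having reduced to $\bk_{\ell} \in \cB_E^+$ with $1 - g\Gamma_{\bk_{\ell}}(E)$ singular, I would invoke Corollary \ref{cor-a1}, which states the equivalence between $1 \in \sigma(g\Gamma_{\bk_{\ell}}(E))$ for $\bk_{\ell} \in \cB_E$ and the conjunction $\bk_{\ell} \in \Omega_g(E)$ together with $g\lambda_1(\bk_{\ell},E) = 1$. By the definition \eqref{r6} this conjunction is exactly the statement $\bk_{\ell} \in \mathcal{C}_g(E)$, completing the case $\bk_{\ell} \in \cB_E$ and hence the full dichotomy. The main obstacle I anticipate is the careful handling of the $\bk_{\ell} \in \cB_E^-$ boundary value: because Lemma \ref{lm-r1}(e) shows the two limits $\Gamma_{\bk_{\ell}}(E \pm {\rm i} 0)$ differ by $2 C_{\bk_{\ell}}^{(0)}(E)$ for $\bk_{\ell} \in \cB_E^-$, one cannot naively identify the limit with the self-adjoint $\Gamma_{\bk_{\ell}}(E)$, and the spectral-localization argument above is what avoids that pitfall by excluding $\cB_E^-$ entirely rather than trying to analyze it directly.
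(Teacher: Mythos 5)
Your proof is correct and shares the paper's skeleton: Lemma \ref{lm-sw1} gives $(1-g\Gamma_{\bk_{\ell}}(E\pm{\rm i}0))Wu=0$ with $Wu\neq 0$, Lemma \ref{lm-r1}(d) handles $\bk_{\ell}\in\cB_E^+$, and Corollary \ref{cor-a1} converts $1\in\sigma(g\Gamma_{\bk_{\ell}}(E))$ into $\bk_{\ell}\in\mathcal{C}_g(E)$. Where you genuinely diverge is the case $\bk_{\ell}\in\cB_E^-$. The paper exploits the fact that Lemma \ref{lm-sw1} provides \emph{both} boundary values at once: from $g\Gamma_{\bk_{\ell}}(E+{\rm i}0)v=v=g\Gamma_{\bk_{\ell}}(E-{\rm i}0)v$, the second claim of Lemma \ref{lm-r1}(d) together with Lemma \ref{lm-r1}(e) forces $C^{(0)}_{\bk_{\ell}}(E)v=0$, hence $(1-g\Gamma_{\bk_{\ell}}(E))v=0$ also on $\cB_E^-$, and a single application of Corollary \ref{cor-a1} then concludes in all cases (the inclusion $\mathcal{C}_g(E)\subset\Omega_g(E)\subset\cB_E^+$ showing only a posteriori that the $\cB_E^-$ case is vacuous). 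You instead exclude $\cB_E^-$ outright using only the $+{\rm i}0$ boundary value and the spectral enclosure \eqref{ps1}: since $1\slash g>sc>c$, the eigenvalue $1\slash g$ cannot lie in $\overline{B}(0,c)$, and for $\bk_{\ell}\in\cB_E^-$ the center $\Lambda_{\bk_{\ell}}(E+{\rm i}0)={\rm i}\beta\slash p(\bk_{\ell},E)$ is purely imaginary, so $|1\slash g-\Lambda_{\bk_{\ell}}(E+{\rm i}0)|\geq 1\slash g>c$ and the second ball is excluded as well, a contradiction; equivalently, you may simply quote Proposition \ref{pr-spe}(a) at $\varepsilon=0$, which gives $1\in\rho(g\Gamma_{\bk_{\ell}}(E))$ for every $\bk_{\ell}\in\cB_E\setminus\Omega_g(E)\supset\cB_E^-$. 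This alternative is legitimate and has the advantage of never invoking Lemma \ref{lm-r1}(e); the paper's route is more unified, treating both components of $\cB_E$ by one appeal to Corollary \ref{cor-a1}.

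Two small points to tighten. First, to apply \eqref{ps1} (or Proposition \ref{pr-spe}(a)) to the \emph{limit} operator you must identify $\Gamma_{\bk_{\ell}}(E+{\rm i}0)$ with $\Gamma_{\bk_{\ell}}(E)$; this is exactly the second claim of Lemma \ref{lm-r1}(d), which you should cite at that step. Second, your closing remark is slightly off: for $\bk_{\ell}\in\cB_E^-$ it is only the $-{\rm i}0$ limit that fails to coincide with $\Gamma_{\bk_{\ell}}(E)$ (it differs by $-2C^{(0)}_{\bk_{\ell}}(E)$ by Lemma \ref{lm-r1}(e)), whereas the $+{\rm i}0$ limit \emph{is} $\Gamma_{\bk_{\ell}}(E)$; moreover $\Gamma_{\bk_{\ell}}(E)$ is not self-adjoint on $\cB_E^-$ (Lemma \ref{lm-r1}(b) excludes $\varepsilon=0$ there), which is precisely why its numerical data sit near the imaginary axis and your exclusion argument works.
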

\begin{proof}
Let $u$ denote a guided state associated to $\bk_{\ell} \in \cB_E$. Then $v=Wu$ is solution to the equation 
\bel{sw1b}
(1 - g \Gamma_{\bk_{\ell}}(E \pm {\rm i} 0)) v = 0,
\ee
according to \eqref{r4} and Lemma \ref{lm-sw1}. From this and the first part of Lemma \ref{lm-r1}(d) then follows that $(1 - g \Gamma_{\bk_{\ell}}(E)) v = 0$ if $\bk_{\ell} \in \cB_E^+$. Similarly, putting \eqref{sw1b} and the second claim of Lemma \ref{lm-r1} (d)  together with Lemma \ref{lm-r1} (e) for $\bk_{\ell} \in \cB_E^-$, we see that $C_{\bk_{\ell}}^{(0)} v = 0$ as $\Gamma_{\bk_{\ell}}(E+{\rm i} 0) v = \Gamma_{\bk_{\ell}}(E - {\rm i} 0) v$, and hence that $(1 - g \Gamma_{\bk_{\ell}}(E)) v = 0$. Since $v \in \cH \backslash \{ 0 \}$ by Lemma \ref{lm-sw1}, we thus have $1 \in \sigma(g \Gamma_{\bk_{\ell}}(E))$, and the result follows directly from this and Corollary \ref{cor-a1}.
\end{proof}

\subsection{Existence and decay properties of the guided states}

In light of Lemma \ref{lm-sw2} we examine separately the two cases $\bk_{\ell} \in \mathcal{C}_g(E)$ and $\bk_{\ell} \in \cB \backslash \cB_E$.

\subsubsection{Guided states associated to $\bk_{\ell} \in \mathcal{C}_g(E)$}

The case of $\bk_{\ell} \in \mathcal{C}_g(E)$ is treated by the following:
\begin{lemma}
\label{lm-sw3}
Let $E$, $g$ and $W$ be as in Theorem \ref{thm-a}.
Then for all $\bk_{\ell} \in \mathcal{C}_g(E)$  there exists at least one guided state with energy $E$ associated to $\bk_{\ell}$. Moreover it belongs to $\cap_{m \in {\mathbb N}} \cH_m$.
\end{lemma}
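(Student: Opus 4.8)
The plan is to first exhibit a guided state explicitly from an eigenvector of $\Gamma_{\bk_{\ell}}(E)$, and then to upgrade its decay by a bootstrap argument based on the exponential decay of the free resolvent kernel.

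For existence, I would fix $\bk_{\ell}\in\mathcal{C}_g(E)$ and use \eqref{r6}, which gives $g\lambda_1(\bk_{\ell},E)=1$. Since $\bk_{\ell}\in\Omega_g(E)\subset\cB_E^+$, the operator $\Gamma_{\bk_{\ell}}(E)$ is compact and selfadjoint by Lemma \ref{lm-r1}(a),(b), so $\lambda_1(\bk_{\ell},E)=1/g$ is a genuine eigenvalue and there is $v\in\cH\setminus\{0\}$ with $(1-g\Gamma_{\bk_{\ell}}(E))v=0$. Because $|\bk_{\ell}|^2>E$ and $|\bk_{\ell}+\bK_{\ell}|^2>E$ for $\bK_{\ell}\in\cL^{\perp}\setminus\{0\}$ by \eqref{r6b}, the energy $E$ lies strictly below $\inf\sigma(H_0(\bk_{\ell}))$ and $R_0(\bk_{\ell},E)$ is a genuine bounded operator. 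I would then set $u:=gR_0(\bk_{\ell},E)Wv$ and check, using \eqref{r4}, that $Wu=g\Gamma_{\bk_{\ell}}(E)v=v\neq0$ (so $u\neq0$) and that $(H_0(\bk_{\ell})-E)u=gWv=gW^2u$; since $V=-gW^2$ this gives $(H_0(\bk_{\ell})+V-E)u=0$, i.e. $u$ is a guided state with energy $E$ associated to $\bk_{\ell}$.

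For the decay statement, I would exploit that $Wu=v$ yields the fixed-point identity $u=gR_0(\bk_{\ell},E)W^2u$ (which is Lemma \ref{lm-sw1} for $\bk_{\ell}\in\cB_E^+$) and iterate it in the scale $\{\cH_\tau\}_{\tau\ge0}$. This needs two mapping properties: first, that multiplication by $W^2$ maps $\cH_\tau$ into $\cH_{\tau+3+\epsilon}$, which is immediate from \eqref{rr0} since it makes $(1+x_1^2)^{(3+\epsilon)/2}W^2$ bounded; and second, that $R_0(\bk_{\ell},E)$ maps each $\cH_\tau$ into itself. Granting these, the bootstrap closes at once: $u\in\cH_0$, and $u\in\cH_\tau$ forces $W^2u\in\cH_{\tau+3+\epsilon}$, hence $u=gR_0(\bk_{\ell},E)W^2u\in\cH_{\tau+3+\epsilon}$; since each step raises the weight by the fixed amount $3+\epsilon>0$ and $\cH_{\tau'}\subset\cH_\tau$ for $\tau'\ge\tau$, after finitely many steps $u\in\cH_m$ for any prescribed $m$, so $u\in\cap_{m\in{\mathbb N}}\cH_m$.

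The hard part will be the second mapping property. I would prove it from the explicit kernel $G_0(\bx,\by)$ of $R_0(\bk_{\ell},E)$, obtained by removing the factor $W(\bx)W(\by)$ from \eqref{r5}: with all transverse channels closed (all $p_I(\bk_{\ell}+\bK_{\ell},E)>0$ by \eqref{rr1}, \eqref{r6b}), every term carries a factor $\mathrm{e}^{-p_I(\bk_{\ell}+\bK_{\ell},E)|x_1-y_1|}$, and Lemma \ref{lm-r3} controls the series away from the integrable local singularity at $x_1=y_1$, giving exponential decay of $G_0$ in $|x_1-y_1|$. Then, using Peetre's inequality $(1+x_1^2)^{\tau/2}(1+y_1^2)^{-\tau/2}\le2^{\tau/2}(1+|x_1-y_1|^2)^{\tau/2}$, the weighted kernel $(1+x_1^2)^{\tau/2}G_0(\bx,\by)(1+y_1^2)^{-\tau/2}$ is still dominated by a polynomial times an exponential in $|x_1-y_1|$, so a Schur test shows it is bounded on $\cH$ — which is exactly the boundedness of $R_0(\bk_{\ell},E)$ on $\cH_\tau$. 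A Combes--Thomas estimate would give the same conclusion.
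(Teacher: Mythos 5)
Your existence argument and your bootstrap scheme coincide with the paper's: the paper likewise takes an eigenvector $v$ of $g\Gamma_{\bk_{\ell}}(E)$ for the eigenvalue $1$, checks that $R_0(\bk_{\ell},E)Wv$ is a nonzero guided state, and then iterates $u=gR_0(\bk_{\ell},E)W^2u$ (its Lemma \ref{lm-sw1}) in the scale $\{\cH_\tau\}_{\tau\geq 0}$, gaining $3+\epsilon$ in the weight at each step. The one ingredient you prove by a different route --- the boundedness of $R_0(\bk_{\ell},E)$ on $\cH_\tau$ --- is where your argument has a genuine gap.

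Taking absolute values term by term in the series \eqref{r5} discards the transverse oscillations ${\rm e}^{{\rm i}\langle\bK_{\ell},\bx_{\ell}-\by_{\ell}\rangle}$ and leaves you with the envelope $\Phi(|x_1-y_1|):=\beta\sum_{\bK_{\ell}}p_I(\bk_{\ell}+\bK_{\ell},E)^{-1}{\rm e}^{-p_I(\bk_{\ell}+\bK_{\ell},E)|x_1-y_1|}$, which does not depend on $\bx_{\ell},\by_{\ell}$. Since $p_I(\bk_{\ell}+\bK_{\ell},E)$ grows like $|\bK_{\ell}|$, this envelope blows up like $1/|x_1-y_1|$ near the diagonal (compare with $\int_1^\infty {\rm e}^{-rt}\,{\rm d}r={\rm e}^{-t}/t$), and such a singularity, being uniform in the transverse variables, is \emph{not} integrable on $\re\times\cS$. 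Concretely, the Schur integral you need is, even before inserting the Peetre factor, $\int_{\re\times\cS}\Phi(|x_1-y_1|)\,{\rm d}\by = 2\beta|\cS|\sum_{\bK_{\ell}}p_I(\bk_{\ell}+\bK_{\ell},E)^{-2}$, and this lattice sum diverges logarithmically: Lemma \ref{lm-r3} provides summability only for exponents $\mu>2$, excluding precisely the exponent $2$ that appears here. That the bound already diverges at $\tau=0$, where boundedness of $R_0(\bk_{\ell},E)$ on $\cH$ is trivial because $E\in\rho(H_0(\bk_{\ell}))$, shows the loss is caused by the method itself. Your phrase ``integrable local singularity at $x_1=y_1$'' is true of the actual kernel, whose singularity is of three-dimensional type $|\bx-\by|^{-1}$, but establishing that behaviour requires exploiting the cancellations in the transverse sum, which your sketch never does.

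The repair is to avoid summing moduli over the dual lattice. Staying within your framework: $R_0(\bk_{\ell},E)$ is diagonal in the transverse basis $\{\varphi(\bk_{\ell}+\bK_{\ell})\}$, and the weight $(1+x_1^2)^{\tau/2}$ commutes with this decomposition, so it suffices to bound the one-dimensional channel operators, with kernels ${\rm e}^{-p_I|x_1-y_1|}/(2p_I)$, on ${\rm L}^{2,\tau}(\re)$ \emph{uniformly} in $\bK_{\ell}$; Peetre plus a one-dimensional Schur test gives a channel norm of order $p_I^{-2}+p_I^{-2-\tau}$, which is uniformly bounded since $p_I(\bk_{\ell}+\bK_{\ell},E)\geq(\delta E_\delta)^{1/2}$ for $\bK_{\ell}\neq 0$ by \eqref{rr4} (this is where $\delta>0$ enters), and the norm of a direct sum is the supremum, not the sum, of the channel norms. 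Alternatively, the Combes--Thomas estimate you mention in one line is in substance what the paper does: its Lemma \ref{lm-nz} conjugates $H_0(\bk_{\ell})-E$ by the polynomial weight and uses $E<|\bk_{\ell}|^2=\inf\sigma(H_0(\bk_{\ell}))$, which yields the stronger ${\rm H}^{2,\sigma}$ bound used in the paper's bootstrap.
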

\begin{proof}
Let $\bk_{\ell} \in \mathcal{C}_g(E)$. The spectrum of $\Gamma_{\bk_{\ell}}(E)$ being pure point by Lemma \ref{lm-r1}(a), it follows from the very definition of $\mathcal{C}_g(E)$ that there exists $v \in \cH \backslash \{ 0 \}$ satisfying $(1 - g \Gamma_{\bk_{\ell}}(E)) v = 0$. This entails that $v=g W R_0(\bk_{\ell},E) W v$ according to \eqref{r4} and Proposition \ref{pr-lapz}, and shows that $R_0(\bk_{\ell},E) W v \not=0$. Further, by left multiplying the preceeding equality by $W$, we deduce from \eqref{r4} that $(1- g W^2 R_0(\bk_{\ell},E)) W v = 0$. From this and the obvious identity $(H(\bk_{\ell})-E) R_0(\bk_{\ell},E) W v = (1- g W^2 R_0(\bk_{\ell},E) )Wv$ then follows that 
$$
(H(\bk_{\ell}) -E)  R_0(\bk_{\ell},E) W v =0.
$$
Hence $R_0(\bk_{\ell},E) W v$ is a guided state with energy $E$ associated to $\bk_{\ell}$, which proves the first part of the result.

To show the second part of the statement, we consider a guided state $u \in \cH_{\tau}$, $\tau \geq 0$, associated to $\bk_{\ell} \in \mathcal{C}_g(E)$. Since $u$ is solution to the equation $u = g R_0(\bk_{\ell},E) W^2 u$ by Lemma \ref{lm-sw1}, and $f=W^2 u \in \cH_{3+\epsilon+\tau}$ according to \eqref{rr0}, we deduce from Lemma \ref{lm-nz} that $u=g R_0(\bk_{\ell},E) f \in {\rm H}^{2,3+\epsilon+\tau}(\re \times \cS)$, where we used the notation
${\rm H}^{2,\sigma}(\re \times \cS):= {\rm H}^2(\re \times \cS,(1+x_1^2)^{\sigma \slash 2} {\rm d} x)$ for $\sigma \in \re$. Any guided state associated to $\bk_{\ell}$ and taken in $\cH_{\tau}$ thus belongs to $\cH_{3+\epsilon+\tau}$. From this and the fact that $u \in \cH=\cH_0$ then follows that
$u \in \cH_{\nu}$ for any $\nu \geq 0$. This terminates the proof.
\end{proof}

\subsubsection{Guided states associated to $\bk_{\ell} \in \cB \backslash \cB_E$}
\label{sec-swc}
We now examine the case where $| \bk_{\ell} |^2=E$. We first establish two lemmas, both refering to the notations introduced in \S \ref{sec-spedeco}, and then derive the claim of Theorem \ref{thm-sw} in this particular case.


\begin{lemma}
\label{lm-sw4}
Let $E$, $g$ and $W$ be the same as in Theorem \ref{thm-a}.
Then any guided state $u \in \cH_{\tau}$, $\tau \geq 0$, with energy $E$ and associated to $\bk_{\ell}$, verifies
$\widetilde{W^2 u}(0,\bk_{\ell}) =  (\widetilde{W^2 u})'(0,\bk_{\ell})=0$. Moreover the mapping
$\xi \mapsto  \xi^{-2} \widetilde{W^2 u}(\xi,\bk_{\ell})$ belongs to ${\rm H}^{1+\epsilon+\tau}(\re) \cap {\rm L}_{\rm loc}^1(\re)$.
\end{lemma}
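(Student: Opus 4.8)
The plan is to pass to the spectral representation of $H_0(\bk_{\ell})$ set up in \S\ref{sec-spedeco}, in which $H_0(\bk_{\ell})$ is diagonalised by a Fourier transform in $x_1$ combined with the Bloch--Fourier series in $\bx_{\ell}$, the generalised eigenvalues being $\xi^2+|\bk_{\ell}+\bK_{\ell}|^2$ for $(\xi,\bK_{\ell})\in\re\times\cL^{\perp}$, and $\widetilde g(\cdot,\bk_{\ell})$ denotes the component of $g$ along the channel $\bK_{\ell}=0$. Since $|\bk_{\ell}|^2=E$, i.e. $\bk_{\ell}\in\cB\backslash\cB_E$, this is the only channel whose symbol reaches $E$, and it does so at the threshold $\xi=0$; every channel $\bK_{\ell}\neq0$ stays away from $E$ by \eqref{r6b}, so no condition arises from them. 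I would first record the regularity of the data: setting $f:=W^2u$ and using \eqref{rr0} together with $u\in\cH_{\tau}$, the factor $W^2(1+x_1^2)^{(3+\epsilon)/2}$ is bounded, whence $f\in\cH_{3+\epsilon+\tau}$; translating this weight into a Sobolev order on the transform side gives $\widetilde f(\cdot,\bk_{\ell})\in{\rm H}^{3+\epsilon+\tau}(\re)$, which embeds into ${\rm C}^2(\re)$.

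The crucial identity comes from \eqref{sw1} with the top signs, valid in $\cH$ for every $\varepsilon>0$: projecting it onto the channel $\bK_{\ell}=0$, where $R_0(\bk_{\ell},E+{\rm i}\varepsilon)$ acts as multiplication by $(\xi^2-{\rm i}\varepsilon)^{-1}$, yields
\[
\widetilde u(\xi,\bk_{\ell})=\frac{g\,\widetilde f(\xi,\bk_{\ell})-{\rm i}\varepsilon\,\widetilde u(\xi,\bk_{\ell})}{\xi^2-{\rm i}\varepsilon}.
\]
After multiplying by $\xi^2-{\rm i}\varepsilon$ the $\varepsilon$-terms cancel, leaving the $\varepsilon$-independent identity $\xi^2\,\widetilde u(\xi,\bk_{\ell})=g\,\widetilde f(\xi,\bk_{\ell})$ for a.e.\ $\xi$, hence $\widetilde u(\xi,\bk_{\ell})=g\,\widetilde f(\xi,\bk_{\ell})/\xi^2$ for a.e.\ $\xi\neq0$. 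As $u\in\cH$, the channel $\widetilde u(\cdot,\bk_{\ell})$ lies in ${\rm L}^2(\re)$, so $\xi^{-2}\widetilde f(\cdot,\bk_{\ell})\in{\rm L}^2(\re)$; Taylor expanding $\widetilde f\in{\rm C}^2$ as $\widetilde f(\xi,\bk_{\ell})=\widetilde f(0,\bk_{\ell})+\widetilde f'(0,\bk_{\ell})\xi+O(\xi^2)$ and using that neither $\xi^{-2}$ nor $\xi^{-1}$ is square integrable near $0$, this forces $\widetilde f(0,\bk_{\ell})=\widetilde f'(0,\bk_{\ell})=0$, which is the first assertion.

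For the regularity claim I would exploit the two vanishing conditions through the integral Taylor remainder
\[
\xi^{-2}\widetilde f(\xi,\bk_{\ell})=\int_0^1(1-t)\,\widetilde f''(t\xi,\bk_{\ell})\,{\rm d}t ,
\]
and establish the attached division estimate: writing $\sigma:=1+\epsilon+\tau\geq0$, a change of variables together with $(1+t^2\zeta^2)^{\sigma}\leq(1+\zeta^2)^{\sigma}$ for $t\in(0,1]$ gives $\|\widetilde f''(t\,\cdot\,,\bk_{\ell})\|_{{\rm H}^{\sigma}}\leq t^{-1/2}\|\widetilde f''(\cdot,\bk_{\ell})\|_{{\rm H}^{\sigma}}$, so Minkowski's integral inequality and $\int_0^1(1-t)t^{-1/2}{\rm d}t<\infty$ yield $\xi^{-2}\widetilde f(\cdot,\bk_{\ell})\in{\rm H}^{1+\epsilon+\tau}(\re)$, using that $\widetilde f''(\cdot,\bk_{\ell})\in{\rm H}^{1+\epsilon+\tau}(\re)$. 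The membership in ${\rm L}^1_{\rm loc}(\re)$ then follows from the Sobolev embedding ${\rm H}^{1+\epsilon+\tau}(\re)\hookrightarrow{\rm C}^0(\re)$, since $1+\epsilon+\tau>1/2$.

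The cancellation of the $\varepsilon$-terms makes the identity $\widetilde u=g\widetilde f/\xi^2$, and hence the vanishing conditions, essentially routine. I expect the main obstacle to be the last step, namely the division (Hardy-type) lemma certifying that removing the double zero at the threshold costs exactly two Sobolev derivatives, which requires the dilation bound above to be uniform in the parameter $t$.
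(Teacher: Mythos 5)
Your proof is correct, and its skeleton is the one the paper uses: pass to the spectral representation of \S\ref{sec-spedeco}, extract the channel identity $\xi^2 \tilde{u}(\xi,\bk_{\ell}) = g\, \widetilde{W^2 u}(\xi,\bk_{\ell})$ (the paper reads this off \eqref{hzan4} directly as \eqref{sw0}, rather than via the $\varepsilon$-cancellation in \eqref{sw1}, but the two derivations are interchangeable), note that \eqref{rr0} and $u \in \cH_{\tau}$ give $W^2u \in \cH_{3+\epsilon+\tau}$ and hence $\widetilde{W^2 u}(\cdot,\bk_{\ell}) \in {\rm H}^{3+\epsilon+\tau}(\re)$, then deduce the vanishing at $\xi=0$ and finally the regularity of the quotient. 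Where you genuinely diverge is in the execution of the last two steps. For the vanishing conditions, the paper only invokes the embedding ${\rm H}^{3+\epsilon+\tau}(\re) \hookrightarrow {\rm C}^1(\re)$ and runs a Lebesgue-type averaging argument, computing $\lim_{h \downarrow 0} \frac{1}{2h}\int_{-h}^{h}$ of both sides of \eqref{sw0}; you instead use the stronger (but available, since $3+\epsilon+\tau > 5/2$) embedding into ${\rm C}^2(\re)$ and argue by contradiction from the failure of square integrability of $\xi^{-1}$ and $\xi^{-2}$ near the origin -- both arguments exploit exactly the same input, namely $\tilde{u}(\cdot,\bk_{\ell}) \in {\rm L}^2(\re)$. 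For the division step, the paper simply cites \cite{CD2}[Corollary 3.1] (equivalently \cite{Agmon}[Lemma B.2]), whereas you prove the needed statement from scratch: the integral Taylor remainder $\xi^{-2}\widetilde{W^2u}(\xi,\bk_{\ell}) = \int_0^1 (1-t)\, (\widetilde{W^2u})''(t\xi,\bk_{\ell})\, {\rm d}t$, the dilation bound $\| h(t\,\cdot) \|_{{\rm H}^{\sigma}(\re)} \leq t^{-1/2} \| h \|_{{\rm H}^{\sigma}(\re)}$ for $t \in (0,1]$, $\sigma \geq 0$, and Minkowski's integral inequality with $\int_0^1 (1-t) t^{-1/2}\,{\rm d}t < \infty$. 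This dilation computation is correct (on the Fourier side it amounts to $(1+t^2\zeta^2)^{\sigma} \leq (1+\zeta^2)^{\sigma}$), and it delivers exactly the conclusion of the cited lemma, that removing a double zero costs two Sobolev derivatives. What your route buys is self-containedness -- the one external ingredient of the paper's proof is replaced by a two-line harmonic-analysis estimate; what the paper's route buys is brevity and marginally weaker smoothness requirements in the intermediate step.
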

\begin{proof}
The guided state $u$ being solution to the equation $(H_0(\bk_{\ell})-E ) u = g W^2 u$, we get that
\bel{sw0}
\widetilde{W^2 u}(\xi,\bk_{\ell})=g^{-1} \xi^2  \tilde{u}(\xi,\bk_{\ell}),\ {\rm a.e.}\ \xi \in \re,
\ee
directly from \eqref{hzan4}. Further, $\widetilde{W^2 u}(.,\bk_{\ell}) \in {\rm H}^{3+\epsilon+\tau}(\re)$, since this is the Fourier transform over $\re$, according to \eqref{hzan2}-\eqref{hzan3}, of the
${\rm L}^{2,3+\epsilon+\tau}(\re)$-function $x_1 \mapsto \langle (W^2 u)(x_1,.) , \varphi(\bk_{\ell}) \rangle_{{\rm L}^2(\cS)}$.
Hence we have $\widetilde{W^2 u}(.,\bk_{\ell}) \in {\rm C}^1(\re)$ and consequently
\bel{sw0aa}
\widetilde{W^2 u}(\xi,\bk_{\ell}) = \widetilde{W^2 u}(0,\bk_{\ell}) + \xi (\widetilde{W^2 u})'(0,\bk_{\ell}) + \xi \zeta_{\bk_{\ell}}(\xi),\ \xi \in \re,
\ee
for some $\zeta_{\bk_{\ell}} \in {\rm C}^0(\re)$ satisfying $\lim_{\xi \rightarrow 0} \zeta_{\bk_{\ell}}(\xi)=0$.
From \eqref{sw0}, the square integrability of $\tilde{u}(.,\bk_{\ell})$ over $\re$, and the continuity of $\xi \mapsto \widetilde{W^2 u}(\xi,\bk_{\ell})$ at $0$ then follows that
$$\widetilde{W^2 u}(0,\bk_{\ell}) = \lim_{h \downarrow 0} \frac{1}{2h}  \int_{-h}^{h} \widetilde{W^2 u}(\xi,\bk_{\ell}) {\rm d} \xi = g^{-1} \lim_{h \downarrow 0} \frac{1}{2h} \int_{-h}^{h} \xi^2 \tilde{u}(\xi,\bk_{\ell}) {\rm d} \xi=0.$$
This and \eqref{sw0aa} immediately yield
$$ (\widetilde{W^2 u})'(0,\bk_{\ell}) = -g^{-1} \lim_{h \downarrow 0} \frac{1}{2h} \int_{-h}^{h} (\xi \tilde{u}(\xi,\bk_{\ell})+\zeta_{\bk_{\ell}}(\xi)) {\rm d} \xi = 0.$$ 
Finally the last part of the claim is obtained from the two above identities by applying \cite{CD2}[Corollary 3.1] (see also \cite{Agmon}[Lemma B.2]) to $\widetilde{W^2 u}(.,\bk_{\ell})$.
\end{proof}

\begin{lemma}
\label{lm-sw5}
Assume that $E$, $g$ and $W$ are the same as in Theorem \ref{thm-a}.
Then every guided state $u \in \cH_{\tau}$, $\tau \geq 0$, with energy $E$ associated to $\bk_{\ell}$ verifies:
$$u = g R_0(\bk_{\ell},E \pm {\rm i} 0) W^2 u  \in \cH_{1+\epsilon+\tau}. $$
\end{lemma}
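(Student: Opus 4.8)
The plan is to obtain first the limiting identity $u = g R_0(\bk_{\ell},E \pm {\rm i} 0) W^2 u$ in $\cH$, and then to read off the decay channel by channel from the transform of \S \ref{sec-spedeco}. Since we are in the regime $\bk_{\ell} \in \cB \backslash \cB_E$, i.e. $| \bk_{\ell} |^2 = E$, the energy $E$ lies exactly at the bottom threshold of the channel $\bK_{\ell}=0$, so the argument of Lemma \ref{lm-sw1} does not transfer verbatim: the whole difficulty is concentrated in that single channel and will be resolved through the double vanishing of $\widetilde{W^2 u}(\cdot,\bk_{\ell})$ at $\xi=0$ provided by Lemma \ref{lm-sw4}.

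First I would start, as for \eqref{sw1}, from $(H_0(\bk_{\ell}) - (E \pm {\rm i} \varepsilon)) u = (g W^2 \mp {\rm i} \varepsilon) u$, which yields $u = g R_0(\bk_{\ell},E \pm {\rm i} \varepsilon) W^2 u \mp {\rm i} \varepsilon R_0(\bk_{\ell},E \pm {\rm i} \varepsilon) u$ for every $\varepsilon>0$, and then pass to the limit $\varepsilon \downarrow 0$ in the transform variables. For each channel $\bK_{\ell} \neq 0$ the multiplier $(\xi^2 + | \bk_{\ell}+\bK_{\ell} |^2 - E \mp {\rm i} \varepsilon)^{-1}$ stays bounded by \eqref{r6b}, so those contributions converge and the corresponding piece of $\mp {\rm i} \varepsilon R_0(\bk_{\ell},E \pm {\rm i} \varepsilon) u$ vanishes. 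In the channel $\bK_{\ell}=0$ the multiplier is $(\xi^2 \mp {\rm i} \varepsilon)^{-1}$; here I would observe that $\varepsilon^2 (\xi^4 + \varepsilon^2)^{-1} \le 1$ tends to $0$ pointwise for $\xi \neq 0$, so dominated convergence against $| \tilde{u}(\cdot,\bk_{\ell}) |^2 \in {\rm L}^1(\re)$ removes the $\mp {\rm i} \varepsilon R_0(\bk_{\ell},E \pm {\rm i} \varepsilon) u$ term, while $(\xi^2 \mp {\rm i} \varepsilon)^{-1} \widetilde{W^2 u}(\xi,\bk_{\ell})$ converges in ${\rm L}^2(\re)$ to $\xi^{-2} \widetilde{W^2 u}(\xi,\bk_{\ell})$, the limit being square integrable exactly by Lemma \ref{lm-sw4}. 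This is where the two conditions $\widetilde{W^2 u}(0,\bk_{\ell}) = (\widetilde{W^2 u})'(0,\bk_{\ell}) = 0$ are indispensable, and also why the $+$ and $-$ limits coincide. Collecting the channels gives $u = g R_0(\bk_{\ell},E \pm {\rm i} 0) W^2 u$ in $\cH$.

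It then remains to show $u \in \cH_{1+\epsilon+\tau}$, which I would establish by splitting $u$ according to the transform of \S \ref{sec-spedeco} into its $\bK_{\ell}=0$ channel and the complementary part, using the correspondence between membership in $\cH_\sigma$ and ${\rm H}^\sigma(\re)$-regularity of the $x_1$-transform in each channel. In the channel $\bK_{\ell}=0$, relation \eqref{sw0} reads $\tilde{u}(\xi,\bk_{\ell}) = g\, \xi^{-2} \widetilde{W^2 u}(\xi,\bk_{\ell})$, which lies in ${\rm H}^{1+\epsilon+\tau}(\re)$ precisely by the last assertion of Lemma \ref{lm-sw4}. On the orthogonal complement of that channel the resolvent multipliers $(\xi^2 + | \bk_{\ell}+\bK_{\ell} |^2 - E)^{-1}$, $\bK_{\ell} \neq 0$, have denominators exceeding $\delta E_{\delta}>0$ by \eqref{r6b}, so $R_0(\bk_{\ell},E)$ restricted there is regular (this is the non-threshold situation handled by Lemma \ref{lm-nz}); since $W^2 u \in \cH_{3+\epsilon+\tau}$ by \eqref{rr0}, the complementary part of $u$ is thus even in $\cH_{3+\epsilon+\tau}$. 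Recombining the two pieces yields $u \in \cH_{1+\epsilon+\tau}$, the threshold channel being the one that limits the decay rate.

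The main obstacle is the threshold channel $\bK_{\ell}=0$: both the existence of the limit as $\varepsilon \downarrow 0$ and the announced gain of $1+\epsilon$ in the decay rest entirely on the fact, established in Lemma \ref{lm-sw4}, that $\widetilde{W^2 u}(\cdot,\bk_{\ell})$ has a double zero at the origin with $\xi^{-2} \widetilde{W^2 u}$ still in ${\rm H}^{1+\epsilon+\tau}(\re)$; without it the factor $\xi^{-2}$ would already break square integrability. That the gain is exactly $1+\epsilon$ is what permits the later bootstrap from $\cH_\tau$ up to $\cap_{m \in {\mathbb N}} \cH_m$.
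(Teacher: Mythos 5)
Your proposal is correct and follows essentially the same route as the paper: pass to the limit in \eqref{sw1}, isolate the threshold channel $\bK_{\ell}=0$ where the double zero of $\widetilde{W^2 u}(\cdot,\bk_{\ell})$ from Lemma \ref{lm-sw4} makes dominated convergence work, and then split off that channel (the paper decomposes $W^2u=w_1+w_2$ rather than $u$ itself, which is equivalent) to get the $\cH_{1+\epsilon+\tau}$ decay from $\xi^{-2}\widetilde{W^2 u}\in {\rm H}^{1+\epsilon+\tau}(\re)$ on the threshold channel and from the spectral gap \eqref{r6b} together with the Lemma \ref{lm-nz}-type elliptic estimate on the complement. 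Your only deviations are cosmetic: you obtain strong ${\rm L}^2$ convergence where the paper settles for weak convergence via Proposition \ref{pr-lapz}(b).
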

\begin{proof}
Let us first prove that $u= g R_0(\bk_{\ell},E \pm {\rm i} 0) W^2 u$. 
With reference to \eqref{sw1} (which still holds true for $|\bk_{\ell}|^2=E$) and Proposition \ref{pr-lapz}(b), it is enough to show that $\lim_{\varepsilon \downarrow 0} R_0(\bk_{\ell},E \pm {\rm i} \varepsilon) W^2 u$ exists in the weak topology of $\cH$. In view of \eqref{hzan5} this can be achieved by establishing for every $v \in \cH$ that
$$ \lim_{\varepsilon \downarrow 0} \sum_{\bK_{\ell} \in \cL^{\perp}} \int_{\re} \left( \frac{\widetilde{W^2 u}(\xi,\bk_{\ell}+\bK_{\ell})}{\xi^2 + | \bk_{\ell} + \bK_{\ell} |^2 -E \mp {\rm i} \varepsilon} - \frac{\widetilde{W^2 u}(\xi,\bk_{\ell}+\bK_{\ell})}{\xi^2 + | \bk_{\ell} + \bK_{\ell} |^2 -E} \right) \overline{\tilde{v}(\xi,\bk_{\ell}+\bK_{\ell})} {\rm d} \xi = 0. $$
Actually, due to \eqref{r6b} and \eqref{hzan4b}, only the case $\bk_{\ell}=0$ has to be examined. This can be done by noticing that
$$ \left| \frac{\widetilde{W^2 u}(\xi,\bk_{\ell})}{\xi^2 \mp {\rm i} \varepsilon} - \frac{\widetilde{W^2 u}(\xi,\bk_{\ell})}{\xi^2} \right| \leq \left| \frac{\widetilde{W^2 u}(\xi,\bk_{\ell})}{\xi^2} \right|,\ {\rm a.e.}\ \xi \in \re, $$
and then invoking the second statement of Lemma \ref{lm-sw4} in order to apply the dominated convergence theorem to the integral $\int_{\re}  ((\xi^2 \mp {\rm i} \varepsilon)^{-1} - \xi^{-2}) \widetilde{W^2 u}(\xi,\bk_{\ell}) \overline{\tilde{v}(\xi,\bk_{\ell})} {\rm d} \xi$.\\
The second part of the proof involves showing that $R_0(\bk_{\ell},E \pm {\rm i} 0) W^2 u  \in \cH_{1+\epsilon+\tau}$.
To do that we decompose $w=W^2 u$ into the sum $w=w_1+w_2$, where $w_1$ verifies
$$\tilde{w}_1(.,\bk_{\ell})=\tilde{w}(.,\bk_{\ell})=\widetilde{W^2 u}(.,\bk_{\ell})\
{\rm and}\ \tilde{w}_1(.,\bk_{\ell}+\bK_{\ell})=0,\ \bK_{\ell} \in \cL^{\perp} \backslash \{ 0 \}.$$
Arguing as in the first part of the proof, we find that
$\lim_{\varepsilon \downarrow 0} R_0(\bk_{\ell},E \pm {\rm i} \varepsilon) w_1 = R_0(\bk_{\ell},E \pm {\rm i} 0) w_1=R_0(\bk_{\ell},E) w_1$ in $\cH$ weakly, with, according to \eqref{hzan2}-\eqref{hzan3},
$$ (R_0(\bk_{\ell},E \pm {\rm i} 0)w_1)(x_1,\bx_{\ell}) = \frac{1}{(2 \pi)^{1 \slash 2} | \cS |^{1 \slash 2}} \left( \int_{\re} {\rm e}^{{\rm i} \xi x_1} \frac{\widetilde{W^2 u}(\xi,\bk_{\ell})}{\xi^2} {\rm d} \xi \right) {\rm e}^{{\rm i} \langle \bk_{\ell},\bx_{\ell} \rangle}. $$
Since $x_1 \mapsto \int_{\re} {\rm e}^{{\rm i} \xi x_1} \xi^{-2}\widetilde{W^2 u}(\xi,\bk_{\ell}) {\rm d} \xi \in {\rm L}^{2,1+\epsilon+\tau}(\re)$ from the second claim of Lemma \ref{lm-sw4}, we deduce from the above identity that $R_0(\bk_{\ell},E \pm {\rm i} 0)w_1 \in \cH_{1+\epsilon+\tau}$.
To prove that $R_0(\bk_{\ell},E \pm {\rm i} 0)w_2 =R_0(\bk_{\ell},E)w_2 \in \cH_{1+\epsilon+\tau}$, we 
notice from \eqref{r6b} that
$$ \xi^2 + | \bk_{\ell} + \bK_{\ell} |^2 - E \geq \xi^2 + \delta E_{\delta} \geq c (\xi^2 + 1),\ \xi \in \re,\ \bK_{\ell} \in \cL^{\perp} \backslash \{ 0 \}, $$
where $c=\min(1,\delta E_{\delta})>0$, then we refer to \eqref{hzan4}-\eqref{hzan4b} and find that
\beas
\| w_2 \|_{\cH}^2 & = & \| ( H_0(\bk_{\ell}) - E) R_0(\bk_{\ell},E) w_2 \|_{\cH}^2  \\
& \geq & c^2 \sum_{\bK_{\ell} \in \cL^{\perp} \backslash \{ 0 \}} \int_{\re} (\xi^2 + 1)^2  \frac{|\tilde{W^2 u}(\xi,\bk_{\ell}+\bK_{\ell})|^2}{(\xi^2 + | \bk_{\ell} + \bK_{\ell} |^2 - E)^2}{\rm d} \xi \\
& \geq & c^2 \| (-\Delta+1) R_0(\bk_{\ell},E) w_2 \|_{\cH}^2.
\eeas
Henceforth we have
\bel{sw3}
\| R_0(\bk_{\ell},E) w_2 \|_{{\rm H}^2(\re \times \cS)} \leq \tilde{c} \| w_2 \|_{\cH} \leq \tilde{c} \| W^2 u \|_{\cH},
\ee
for some constant $\tilde{c}>0$ independent of $u$, according to \eqref{hzan4b}. Further, $W^2 u$ being in  $\cH_{3+\epsilon+\tau}$, we have
$\widetilde{W^2 u}(.,\bk_{\ell}) \in {\rm H}^{3+\epsilon+\tau}(\re)$, whence
$$ w_1(x_1,\bx_{\ell})= \frac{1}{(2 \pi)^{1 \slash 2} | \cS |^{1 \slash 2}} \left( \int_{\re} {\rm e}^{{\rm i} \xi x_1} \widetilde{W^2 u}(\xi,\bk_{\ell}){\rm d} \xi \right) {\rm e}^{{\rm i} \langle \bk_{\ell},\bx_{\ell} \rangle} \in \cH_{3 + \epsilon+\tau}, $$
and consequently $w_2=W^2 u - w_1 \in \cH_{3 + \epsilon+\tau}$. Finally, arguing as in the proof of Lemma \ref{lm-nz} and using \eqref{sw3}, we obtain that $R_0(\bk_{\ell},E) w_2 \in \cH_{3+\epsilon+\tau}$.
\end{proof}

Any guided state $u$ being in $\cH$, the claim of Theorem \ref{thm-sw} follows by successively applying Lemma \ref{lm-sw5} to $u$. 

\subsection{Proof of Corollary \ref{cor-sw}}
\label{sec-cor-sw}
We now establish the coming lemma, which, together with Theorem \ref{thm-sw}, entails Corollary \ref{cor-sw}.

\begin{lemma}
\label{lm-sw6}
Let $E$ and $W$ be the same as in Corollary \ref{cor-sw}. Then for every $\bk_{\ell} \in \cB$ such that
$| \bk_{\ell} |=E^{1 \slash 2}$, we have $\ker( H(\bk_{\ell}) - E ) = \{ 0 \}$.
\end{lemma}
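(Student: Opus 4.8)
The plan is to argue by contradiction. I would suppose there is $u\in\cH\setminus\{0\}$ with $H(\bk_{\ell})u=Eu$ and $|\bk_{\ell}|^{2}=E$, and aim to deduce $u=0$. Two facts come for free: by Theorem~\ref{thm-sw} (via Lemma~\ref{lm-sw5}) such a $u$ lies in $\cap_{m}\cH_{m}$ and satisfies $u=gR_{0}(\bk_{\ell},E\pm{\rm i}0)W^{2}u$; and by Lemma~\ref{lm-sw4} the transform $\widetilde{W^{2}u}(\cdot,\bk_{\ell})$ of the $\bK_{\ell}=0$ component of $W^{2}u$ has a double zero at the origin. I would work throughout in the transverse Fourier modes ${\rm e}^{{\rm i}\langle\bk_{\ell}+\bK_{\ell},\bx_{\ell}\rangle}$, in which $H_{0}(\bk_{\ell})-E$ acts on the $\bK_{\ell}$-th channel as $-\partial_{x_{1}}^{2}+(|\bk_{\ell}+\bK_{\ell}|^{2}-E)$. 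The decisive structural feature is that, since $|\bk_{\ell}|^{2}=E$, the channel $\bK_{\ell}=0$ sits exactly at threshold, while every channel $\bK_{\ell}\neq0$ is strictly closed, $|\bk_{\ell}+\bK_{\ell}|^{2}-E\geq\delta E_{\delta}>0$ by~\eqref{r6b}.

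It is convenient to isolate first the step that actually closes the argument. I claim it suffices to prove that the open channel vanishes, i.e. that the $\bK_{\ell}=0$ component $(W^{2}u)_{0}$ of $W^{2}u$ is identically zero. Indeed, if $(W^{2}u)_{0}\equiv0$ then $R_{0}(\bk_{\ell},E)$ acts on $W^{2}u$ only through the closed channels, so that, writing $v=Wu$ and using~\eqref{r4}, the identity $u=gR_{0}(\bk_{\ell},E)W^{2}u$ collapses to $v=gWR_{0}^{(*)}(\bk_{\ell},E)Wv$, where $R_{0}^{(*)}$ denotes the part of the free resolvent carried by the channels $\bK_{\ell}\neq0$. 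By~\eqref{r6b} this operator is bounded, with $\|WR_{0}^{(*)}(\bk_{\ell},E)W\|\leq\|W\|_{\infty}^{2}/(\delta E_{\delta})$, so choosing $\tilde g_{0}\leq\delta E_{\delta}\|W\|_{\infty}^{-2}$ makes $1-gWR_{0}^{(*)}(\bk_{\ell},E)W$ boundedly invertible for $g\in(0,\tilde g_{0})$; hence $v=0$, and then $u=gR_{0}^{(*)}(\bk_{\ell},E)W^{2}u=0$.

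The hypotheses (i)--(ii) of Corollary~\ref{cor-sw} enter in producing $(W^{2}u)_{0}\equiv0$. Taking $I=(a,+\infty)$ without loss of generality, (ii) forces $W^{2}u$ to be supported in $\{x_{1}\leq a\}$, so its open-channel component $(W^{2}u)_{0}$ is supported in $(-\infty,a]$ and its transform $\Phi:=\widetilde{W^{2}u}(\cdot,\bk_{\ell})$ extends, by the Paley--Wiener theorem, to a function holomorphic in the upper half-plane $\{\mathrm{Im}\,\xi>0\}$ and continuous up to $\re$, carrying the double zero $\Phi(0)=\Phi'(0)=0$ already recorded. The open-channel amplitude $u_{0}$ solves $-u_{0}''=g(W^{2}u)_{0}$; integrating twice and invoking the decay $u_{0}\to0$ at $\pm\infty$ re-derives the two moment conditions and shows in addition that $u_{0}\equiv0$ on $(a,+\infty)$.

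The hard part is to upgrade these partial statements to $\Phi\equiv0$ (equivalently $u_{0}\equiv0$ on all of $\re$). The obstruction is intrinsic to the threshold: the free resolvent acts on the open channel as the unbounded multiplier $\xi^{-2}$, so no contraction is available there and smallness of $g$ is by itself useless; this is precisely why the half-space hypothesis (ii) cannot be dispensed with, and why threshold states cannot be excluded for general $W$. The route I would pursue is to substitute the channel representation of $u$ back into $(W^{2}u)_{0}$, thereby reducing the eigenvalue equation to a single scalar equation for $\Phi$ on the half-line $(-\infty,a]$ --- a Riemann--Hilbert/Wiener--Hopf type problem in which the holomorphy in the upper half-plane, the double zero at $0$, and the smallness of $g$ conspire to force $\Phi\equiv0$. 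The smoothness hypothesis (i), which gives $O(|\bK_{\ell}|^{-4})$ decay of the transverse Fourier coefficients of $W$, is what legitimises the mode-by-mode manipulations and the convergence of the transverse series underlying this reduction. I expect establishing $\Phi\equiv0$ from this reduced scalar problem to be the main technical obstacle.
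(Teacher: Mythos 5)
Your framework is the same as the paper's: decompose $u$ over the transverse modes, observe that the channel $\bK_{\ell}=0$ sits exactly at threshold while every channel $\bK_{\ell}\neq 0$ enjoys the gap \eqref{r6b}, use hypothesis (ii) to extract information about the open channel over $I$, and close by a smallness-in-$g$ argument on the closed channels. Your reduction and your closing step are correct, and the closing step is even slightly cleaner than the paper's: if $(W^2u)_0\equiv 0$ then $Wu$ solves $(1-gWR_0^{(*)}W)Wu=0$ with $\|WR_0^{(*)}W\|\leq \|W\|_{{\rm L}^{\infty}(\re\times\cS)}^2\slash(\delta E_{\delta})$, hence $u=0$ for small $g$; this avoids hypothesis (i), which the paper needs for its own closing (the estimate \eqref{fo6} with constant $c(W,\cS)$, obtained from \eqref{fo5} by integration by parts). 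Moreover your intermediate target is the right one: by the zero-channel equation \eqref{fo3} (with $|\bk_{\ell}|^2=E$) and square-integrability, $(W^2u)_0\equiv 0$ is equivalent to the paper's claim \eqref{fo7} that $u_{\bk_{\ell}}\equiv 0$ on all of $\re$.

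The genuine gap is that you never prove that target. What you do establish -- $u_{\bk_{\ell}}=0$ on $I$ (affine plus ${\rm L}^2$), the two moment conditions, holomorphy of $\Phi=\widetilde{W^2u}(\cdot,\bk_{\ell})$ in the upper half-plane with a double zero at the origin -- is exactly as far as the paper's proof gets before its own key step, and these facts alone cannot force $\Phi\equiv 0$: the function $\xi\mapsto \xi^2(\xi+{\rm i})^{-4}$ is holomorphic in the upper half-plane, square integrable and continuous on $\re$, and has a double zero at $0$, yet is not identically zero. So the coupling to the closed channels must enter in an essential way, and your proposal only gestures at this (``a Riemann--Hilbert/Wiener--Hopf type problem \dots the main technical obstacle'') without giving any argument; as you yourself observe, no contraction is available on the open channel, so it is not even clear that the sketched route can close. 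The paper fills precisely this hole, and by a different mechanism: having shown $u_{\bk_{\ell}}=0$ on $I$, it extends this to \eqref{fo7} on all of $\re$ by an ODE uniqueness (Cauchy) argument for the zero-channel equation, and only then runs the small-$g$ estimate over the closed channels -- a step that genuinely requires \eqref{fo7}, since by \eqref{fo5} the amplitude $u_{\bk_{\ell}}$ feeds into every channel of $W^2u$. (The paper's own justification of that extension is admittedly terse, but it is the one step any complete proof must supply, and your proposal leaves it open.) As it stands, your argument does not prove the lemma.
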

\begin{proof}
Let $u \in D(H(\bk_{\ell})) = D(H_0(\bk_{\ell}))$ be solution to the eigenvalue equation 
\bel{fo1}
H(\bk_{\ell}) u = (-\Delta - g W^2) u = E u.
\ee
With reference to \eqref{hzan2}, the set $\{ \varphi(\bk_{\ell}+\bK_{\ell}) \}_{ \bK_{\ell} \in \cL^{\perp} }$ is an orthonormal basis of ${\rm L}^2(\cS)$, so $u$ decomposes as
\bel{fo2}
u(x_1,\bx_{\ell}) = \sum_{\bK_{\ell} \in \cL^{\perp}} u_{\bk_{\ell}+\bK_{\ell}}(x_1) \varphi(\bk_{\ell}+\bK_{\ell};\bx_{\ell}),
\ee
where $u_{\bk_{\ell}+\bK_{\ell}}(x_1):=\int_{\cS} u(x_1,\bx_{\ell}) \overline{\varphi(\bk_{\ell}+\bK_{\ell};\bx_{\ell})} {\rm d} \bx_{\ell}$. From this and \eqref{fo1} then follows that 
\bel{fo3}
-u_{\bk_{\ell}+\bK_{\ell}}''(x_1) + ( | \bk_{\ell} + \bK_{\ell} |^2 - E ) u_{\bk_{\ell}+\bK_{\ell}}(x_1) - g (W^2 u)_{\bk_{\ell}+\bK_{\ell}}(x_1) = 0,\ x_1 \in \re,\ \bK_{\ell} \in \cL^{\perp}.
\ee
Further we have
$W^2(x_1,\bx_{\ell}) = \sum_{\bK_{\ell}' \in \cL^{\perp}} (W^2)_{\bK_{\ell}'}(x_1) \varphi(\bK_{\ell}';\bx_{\ell})$ for a.e. $(x_1,\bx_{\ell}) \in \re \times \cS$,
from where we get that
\bea
(W^2 u)_{\bk_{\ell}+\bK_{\ell}}(x_1) & = & \int_{\cS} W(x_1,\bx_{\ell})^2 u(x_1,\bx_{\ell}) \overline{\varphi(\bk_{\ell}+\bK_{\ell};\bx_{\ell})} {\rm d} \bx_{\ell} \nonumber \\
& = & \sum_{\bK_{\ell}' \in \cL^{\perp}} (W^2)_{\bK_{\ell}'}(x_1) \int_{\cS} u(x_1,\bx_{\ell}) \varphi(\bK_{\ell}';\bx_{\ell}) \overline{\varphi(\bk_{\ell}+\bK_{\ell};\bx_{\ell})} {\rm d} \bx_{\ell} \nonumber \\
& =& | \cS |^{-1 \slash 2} \sum_{\bK_{\ell}' \in \cL^{\perp}} (W^2)_{\bK_{\ell}'}(x_1) u_{\bk_{\ell}+\bK_{\ell}-\bK_{\ell}'}(x_1), \label{fo5}
\eea
by using the fact that $\varphi(\bK_{\ell}') \overline{\varphi(\bk_{\ell}+\bK_{\ell})}
=| \cS |^{-1 \slash 2} \varphi(\bk_{\ell}+\bK_{\ell}-\bK_{\ell}')$.
From \eqref{fo5} then follows for a.e. $x_1 \in \re$ that 
$$
\left( \sum_{\bK_{\ell} \in \cL^{\perp}} | (W^2 u)_{\bk_{\ell}+\bK_{\ell}}(x_1) |^2 \right)^{1 \slash 2}
\leq | \cS |^{-1 \slash 2} \left( \sum_{\bK_{\ell} \in \cL^{\perp}} | (W^2)_{\bK_{\ell}}(x_1) | \right)
\left( \sum_{\bK_{\ell} \in \cL^{\perp}} | u_{\bk_{\ell}+\bK_{\ell}}(x_1) |^2 \right)^{1 \slash 2}.
$$
In light of (ii) we may deduce from this that there is a constant $c(W,\cS)>0$, depending only on $W$ and $| \cS |$,
such that
\bel{fo6}
\sum_{\bK_{\ell} \in \cL^{\perp}} \| (W^2 u)_{\bk_{\ell}+\bK_{\ell}} \|_{{\rm L}^2(\re)}^2 \leq c(W,\cS)^2 \sum_{\bK_{\ell} \in \cL^{\perp}} \| u_{\bk_{\ell}+\bK_{\ell}} \|_{{\rm L}^2(\re)}^2.
\ee 
This boils down to the fact that
$$ | (W^2)_{\bK_{\ell}}(x_1) | \leq \frac{| \cS |^{1 \slash 2}}{(1+| \bK_{\ell} |^2)^2} \| (1-\Delta_{\ell})^2 W^2 \|_{{\rm L}^{\infty}(\re \times \cS)},\ x_1 \in \re,\ \bK_{\ell} \in \cL^{\perp}, $$
where $\Delta_{\ell}$ denotes the Laplace operator w.r.t. to $\bx_{\ell}$, as can be seen by noticing that 
$\Delta_{\ell} \varphi(\bK_{\ell}) = -| \bK_{\ell} |^2 \varphi(\bK_{\ell})$, and integrating by parts in the following integral:
\beas
(W^2)_{\bK_{\ell}}(x_1) & = & \int_{\cS} W(x_1,\bx_{\ell})^2 \frac{1}{(1+| \bK_{\ell} |^2)^2} \overline{(1-\Delta_{\ell})^2 \varphi(\bK_{\ell};\bx_{\ell})} {\rm d} x_{\ell} \\
& =& \frac{1}{(1+| \bK_{\ell} |^2)^2} \int_{\cS} \overline{\varphi(\bK_{\ell};\bx_{\ell})} (1-\Delta_{\ell})^2 W^2(x_1,\bx_{\ell}) {\rm d} \bx_{\ell}.
\eeas
Further, for all $\bK_{\ell} \in \cL^{\perp}$ it follows from (i) that $(W^2)_{\bK_{\ell}}(x_1)=\int_{\cS} W(x_1,\bx_{\ell})^2 \overline{\varphi(\bK_{\ell};\bx_{\ell})} {\rm d} \bx_{\ell}=0$ for a.e. $x_1 \in I$, which, together with \eqref{fo5}, entails $(W^2 u)_{\bk_{\ell}+\bK_{\ell}}(x_1)=0$. In light of \eqref{fo3} for $\bK_{\ell}=0$, this yields 
$-u_{\bk_{\ell}}''(x_1)=0$ for a.e. $x_1 \in I$, hence $u_{\bk_{\ell}}(x_1)=0$ since 
$u_{\bk_{\ell}}$ is square integrable in $I$, and finally 
\bel{fo7}
u_{\bk_{\ell}}(x_1)=0,\ {\rm a.e.}\ x_1 \in \re. 
\ee
by applying Cauchy's theorem.\\
\noindent The next step of the proof involves multiplying \eqref{fo3} by $\overline{u_{\bk_{\ell}+\bK_{\ell}}(x_1)}$ for each $\bK_{\ell} \in \cL^{\perp}$, integrating the result w.r.t. $x_1$ over $\re$, and then performing an integration by parts in the first integral in the l.h.s. of the obtained equality. We find that
$$
\| u_{\bk_{\ell}+\bK_{\ell}}' \|_{{\rm L}^2(\re)}^2 + ( | \bk_{\ell} + \bK_{\ell} |^2 - E ) \| u_{\bk_{\ell}+\bK_{\ell}} \|_{{\rm L}^2(\re)}^2  - g \langle (W^2u)_{\bk_{\ell}+\bK_{\ell}} ,  u_{\bk_{\ell}+\bK_{\ell}} \rangle_{{\rm L}^2(\re)}=0,\ \forall \bK_{\ell} \in \cL^{\perp}.
$$
Summing up the above identity over $\bK_{\ell} \in \cL^{\perp} \backslash \{ 0 \}$, we deduce from \eqref{r6b} and \eqref{fo6}-\eqref{fo7} that
$$ \sum_{\bK_{\ell} \in \cL^{\perp} \backslash \{ 0 \}} \left( \| u_{\bk_{\ell}+\bK_{\ell}}' \|_{{\rm L}^2(\re)}^2
+ (\delta E_{\delta} - g c(W,\cS) ) \| u_{\bk_{\ell}+\bK_{\ell}} \|_{{\rm L}^2(\re)}^2 \right) \leq 0. $$
As a consequence we have $u_{\bk_{\ell}+\bK_{\ell}}=0$ in ${\rm L}^2(\re)$ for every $\bK_{\ell} \in \cL^{\perp} \backslash \{ 0 \}$, provided $g \in (0,\delta E_{\delta} \slash c(W,\cS))$. From this, \eqref{fo2} and \eqref{fo7} then follows that $u=0$, which proves the result.
\end{proof}

\appendix

\label{sec-app}

\section{Appendix A: Proof of Lemma \ref{lm-r1}}
\label{sec-applmr1}
We prove the statements (a), (b) and (c) of  Lemma \ref{lm-r1} successively, the claims (d) and (e) being treated simultaneously.

\noindent (a) Fix $\varepsilon$ in $\re$. We first prove the result for $\Gamma_{\bk_{\ell}}(E+{\rm i} \varepsilon)$ and $\bk_{\ell} \in \cB_E^+$, the cases of $\bk_{\ell} \in \cB_E^-$ and $C_{\bk_{\ell}}(E+{\rm i} \varepsilon)$ being treated in a similar way. To do that we refer to \eqref{r6b}-\eqref{r6cg3}, and write
\bel{r6star}
| \gamma_{\bk_{\ell}}(\bx,\by,E+{\rm i} \varepsilon) |^2
= \beta^{2} W(\bx)^2 W(\by)^2 \zeta_{\bk_{\ell}}(\bx,\by,E+{\rm i} \varepsilon),
\ee
where
\bel{r6star1}
\zeta_{\bk_{\ell}}(\bx,\by,E+{\rm i} \varepsilon)
:= \sum_{\bK_{\ell},\bK_{\ell}' \in \cL^{\perp}}
\frac{{\rm e}^{{\rm i} ( p(\bk_{\ell}+\bK_{\ell},E+{\rm i} \varepsilon) - \overline{p(\bk_{\ell}+\bK_{\ell}',E+{\rm i} \varepsilon )}) |x_1 - y_1|}}{p(\bk_{\ell}+\bK_{\ell},E+{\rm i} \varepsilon)  \overline{p(\bk_{\ell}+\bK_{\ell}',E+{\rm i} \varepsilon)}}
{\rm e}^{{\rm i} \langle \bK_{\ell}-\bK_{\ell}', \bx_{\ell}-\by_{\ell} \rangle}.
\ee
In light of \eqref{rr4}-\eqref{r20}, and since $p_I(\bk_{\ell}+\bK_{\ell},E) >0$ for every $(\bk_{\ell},\bK_{\ell}) \in \cB_E^+ \times \cL^{\perp}$ from \eqref{rr1}, the series $\sum_{\bK_{\ell},\bK_{\ell}' \in \cL^{\perp}}
\frac{{\rm e}^{- ( p_I(\bk_{\ell}+\bK_{\ell},E+{\rm i} \varepsilon) + p_I(\bk_{\ell}+\bK_{\ell}',E+{\rm i} \varepsilon )) |x_1 - y_1|}}{p_I(\bk_{\ell}+\bK_{\ell},E+{\rm i} \varepsilon) p_I(\bk_{\ell}+\bK_{\ell}',E+{\rm i} \varepsilon)}$ converges for every $x_1 \neq y_1$ in $\re$, hence the series in the r.h.s. of \eqref{r6star1}
is normally convergent on $\cS \times \cS$. Bearing in mind that
\bel{r6e}
\int_{\cS} {\rm e}^{{\rm i} \langle \bK_{\ell}-\bK_{\ell}', \bx_{\ell} \rangle} {\rm d} \bx_{\ell} = | \cS | \delta(\bK_{\ell},\bK_{\ell}'),\ \bK_{\ell}, \bK_{\ell}' \in \cL^{\perp},
\ee
this entails
\bel{r6cter}
\int_{\cS \times \cS} \zeta_{\bk_{\ell}}(\bx,\by,E+{\rm i} \varepsilon)  {\rm d} \bx_{\ell} {\rm d} \by_{\ell}
= | \cS |^2 \sum_{\bK_{\ell}\in \cL^{\perp}} \frac{{\rm e}^{- 2p_I(\bk_{\ell}+\bK_{\ell},E+{\rm i} \varepsilon) |x_1 - y_1|}}{| p(\bk_{\ell}+\bK_{\ell},E+{\rm i} \varepsilon)|^2} := | \cS |^2 f_{\varepsilon}(x_1,y_1).
\ee
Further, we have
$\zeta_{\bk_{\ell}}(\bx,\by,E+{\rm i} \varepsilon) \in \re_+$ for all $\bx, \by \in \re \times \cS$, from \eqref{r6star},
hence
\bel{r6ebis}
\int_{\cS \times \cS} | \gamma_{\bk_{\ell}}(\bx,\by,E+{\rm i} \varepsilon) |^2 {\rm d} \bx_{\ell} {\rm d} \by_{\ell}
\leq  \beta^2 | \cS |^2 \| W(x_1,.) \|_{{\rm L}^{\infty}(\cS)}^2 \| W(y_1,.) \|_{{\rm L}^{\infty}(\cS)}^2 f_{\varepsilon}(x_1,y_1),
\ee
for each $x_1  \neq y_1$ in $\re$, by \eqref{r6cter}.
Moreover the mapping $y_1 \mapsto f_{\varepsilon}(x_1,y_1)$ is integrable on $\re$ for every $x_1$ in $\re$, with
\bel{r6d}
\int_{\re} f_{\varepsilon}(x_1,y_1) {\rm d} y_1 \leq
\sum_{\bK_{\ell} \in \cL^{\perp}} \frac{1}{p_I(\bk_{\ell}+\bK_{\ell},E+{\rm i} \varepsilon)^3}:=\alpha_3(\delta,\varepsilon),
\ee
the series in the r.h.s. of \eqref{r6d} being convergent according to \eqref{r20} and Lemma \ref{lm-r3}. Henceforth
$\int_{\re} \| W(y_1,.) \|_{{\rm L}^{\infty}(\cS)}^2 f_{\varepsilon}(x_1,y_1) {\rm d} y_1 \leq
\| W \|_{{\rm L}^{\infty}(\re \times \cS)}^2 \alpha_3(\delta,\varepsilon)$
for all $x_1 \in \re$, whence
$$
\int_{\re \times \re} \| W(x_1,.) \|_{{\rm L}^{\infty}(\cS)}^2  \| W(y_1,.) \|_{{\rm L}^{\infty}(\cS)}^2 f_{\varepsilon}(x_1,y_1) {\rm d} x_1 {\rm d} y_1 \leq
\| W \|_{{\rm L}^{\infty}(\re \times \cS)}^2 \| W \|_{{\rm L}^{2}(\re,{\rm L}^{\infty}(\cS))}^2
\alpha_3(\delta,\varepsilon).
$$
From this and \eqref{r6ebis} then follows that
\bel{r6f}
\int_{(\re \times \cS)^2} | \gamma_{\bk_{\ell}}(\bx,\by,E+ {\rm i} \varepsilon) |^2  {\rm d} \bx {\rm d} \by \leq
\beta^2 | \cS |^2 \| W \|_{{\rm L}^{2}(\re,{\rm L}^{\infty}(\cS))}^2 \| W \|_{{\rm L}^{\infty}(\re \times \cS)}^2
\alpha_3(\delta,\varepsilon),
\ee
by the Tonelli-Fubini theorem.\\
\noindent (b) The second part of the result boils down to the fact that
$$\overline{\gamma_{\bk_{\ell}}(\bx,\by,E+{\rm i} \varepsilon)}= \gamma_{\bk_{\ell}}(\by,\bx,E-{\rm i} \varepsilon),\ (\bk_{\ell},\varepsilon) \in ( \cB_E^- \times \re^*) \cup ( \cB_E^+ \times \re),\ \bx,\by \in \re \times \cS, $$
as
$\overline{p(\bk_{\ell}+\bK_{\ell},E+{\rm i} \varepsilon)}=-p(\bk_{\ell}+\bK_{\ell},E-{\rm i} \varepsilon)$ for every $\bK_{\ell} \in \cL^{\perp}$ by \eqref{r6cg3} and \eqref{rr1}-\eqref{rr2}.\\
\noindent (c) In light of \eqref{r6cg} and \eqref{r9g}, $c_{\bk_{\ell}}(\bx,\by,E+{\rm i} \varepsilon)$ decomposes into the sum
\bel{r13a}
c_{\bk_{\ell}}(\bx,\by,E+{\rm i} \varepsilon) := \sum_{j=1,2} c_{\bk_{\ell}}^{(j)}(\bx,\by,E+{\rm i} \varepsilon),
\ee
where
\bel{r13b}
c_{\bk_{\ell}}^{(1)}(\bx,\by,E+{\rm i} \varepsilon) := \beta W(\bx) W(\by)
\frac{{\rm e}^{{\rm i} p(\bk_{\ell},E+{\rm i} \varepsilon) |x_1 - y_1|}-1}{-{\rm i} p(\bk_{\ell},E+{\rm i} \varepsilon)} {\rm e}^{{\rm i} \langle \bk_{\ell}, \bx_{\ell}-\by_{\ell} \rangle},
\ee
and
\bel{r13c}
c_{\bk_{\ell}}^{(2)}(\bx,\by,E+{\rm i} \varepsilon) := \beta W(\bx) W(\by) \sum_{\bK_{\ell} \in \cL^{\perp} \backslash \{ 0 \}}
\frac{{\rm e}^{{\rm i} p(\bk_{\ell}+\bK_{\ell},E+{\rm i} \varepsilon) |x_1 - y_1|}}{-{\rm i} p(\bk_{\ell}+\bK_{\ell},E+{\rm i} \varepsilon)}  {\rm e}^{{\rm i} \langle \bk_{\ell}+\bK_{\ell}, \bx_{\ell}-\by_{\ell} \rangle}.
\ee
Let $C_{\bk_{\ell}}^{(j)}(E+{\rm i} \varepsilon)$, $j=1,2$, denote the integral operator with kernel $c_{\bk_{\ell}}^{(j)}(\bx,\by,E+{\rm i} \varepsilon)$.
We shall compute the Hilbert-Schmidt norm of each $C_{\bk_{\ell}}^{(j)}(E+{\rm i} \varepsilon)$, $j=1,2$, successively.
First, we have
$$
| c_{\bk_{\ell}}^{(1)}(\bx,\by,E+{\rm i} \varepsilon) |^2
= \beta^2 W(\bx)^2 W(\by)^2 \frac{| {\rm e}^{{\rm i} p(\bk_{\ell},E+{\rm i} \varepsilon) |x_1 - y_1|} -1 |^2}{| p(\bk_{\ell},E+{\rm i} \varepsilon)|^2},\ \bx, \by \in \re \times \cS,
$$
by \eqref{r13b}, whence
$| c_{\bk_{\ell}}^{(1)}(\bx,\by,E+{\rm i} \varepsilon) |^2
\leq \beta^2 W(\bx)^2 W(\by)^2 |x_1-y_1|^2$ from \eqref{rr1} together with the inequality
$|{\rm e}^{z} -1| \leq |z|$, which holds true for all complex number $z$ such that $\Pre{z} \leq 0$.
This entails $| c_{\bk_{\ell}}^{(1)}(\bx,\by,E+{\rm i} \varepsilon) |^2
\leq 2 \beta^2 W(\bx)^2 W(\by)^2 (x_1^2+y_1^2)$ for all $\bx$ and $\by$ in $\re \times \cS$, and hence
\bel{r14}
\int_{(\re \times \cS)^2} | c_{\bk_{\ell}}^{(1)}(\bx,\by,E+{\rm i} \varepsilon) |^2 {\rm d} \bx {\rm d} \by
\leq 4 \beta^2 \| W \|_{\cH_1}^2,
\ee
by recalling \eqref{r2b}.
Further, arguing in the same way as in the derivation of \eqref{r6f}, we deduce from \eqref{r13c} that
$$
\int_{(\re \times \cS)^2}  | c_{\bk_{\ell}}^{(2)}(\bx,\by,E+{\rm i} \varepsilon) |^2 {\rm d} \bx {\rm d} \by
\leq \beta^2 | \cS |^2  \| W \|_{{\rm L}^{2}(\re, {\rm L}^{\infty}( \cS))}^2 \| W \|_{{\rm L}^{\infty}(\re \times \cS)}^2 \alpha_3(\delta),
$$
where $\alpha_3(\delta)$ is the constant defined in Lemma \ref{lm-r3}. Now (c) follows from this, \eqref{r13a} and \eqref{r14}.\\
\noindent (d) \& (e)  In light of \eqref{r13b} we may write for every $\bk_{\ell} \in \cB_E$ and $\varepsilon \in \re$,
\bel{es1}
| c_{\bk_{\ell}}^{(1)}(\bx,\by,E+{\rm i} \varepsilon) - c_{\bk_{\ell}}^{(1)}(\bx,\by,E) |^2
= \beta^2 W(\bx)^2 W(\by)^2 \zeta_{\bk_{\ell}}^{(1)}(\bx,\by,E + {\rm i} \varepsilon),\ \bx, \by \in \re \times \cS,
\ee
with, due to \eqref{r6e},
\bel{es2}
\int_{\cS^2} \zeta_{\bk_{\ell}}^{(1)}(\bx,\by,E + {\rm i} \varepsilon) {\rm d} \bx_{\ell} {\rm d} \by_{\ell}
= | \cS |^2 \sum_{\bK_{\ell} \in \cL^{\perp} \backslash \{ 0 \} } | d_{\bk_{\ell}+\bK_{\ell}}(x_1,y_1,E+{\rm i} \varepsilon) |^2,
\ee
where we have set for each $\bK_{\ell} \in \cL^{\perp} \backslash \{ 0 \}$,
\bel{es3}
d_{\bk_{\ell}+\bK_{\ell}}(x_1,y_1,E+{\rm i} \varepsilon) :=
\frac{{\rm e}^{{\rm i} p(\bk_{\ell}+\bK_{\ell},E+{\rm i} \varepsilon) | x_1-y_1|}}{-{\rm i} p(\bk_{\ell}+\bK_{\ell},E+{\rm i} \varepsilon)} - \frac{{\rm e}^{- p_I(\bk_{\ell}+\bK_{\ell},E) | x_1-y_1|}}{p_I(\bk_{\ell}+\bK_{\ell},E)}.
\ee
Let us decompose $d_{\bk_{\ell}+\bK_{\ell}}(x_1,y_1,E+{\rm i} \varepsilon)$ into the sum
$\sum_{j=1}^3 d_{\bk_{\ell}+\bK_{\ell}}^{(j)}(x_1,y_1,E+{\rm i} \varepsilon)$, with
\beas
d_{\bk_{\ell}+\bK_{\ell}}^{(1)}(x_1,y_1,E+{\rm i} \varepsilon) & := & \frac{{\rm e}^{-p_I(\bk_{\ell}+\bK_{\ell},E+{\rm i} \varepsilon ) |x_1 - y_1|} ( {\rm e}^{{\rm i} p_R(\bk_{\ell}+\bK_{\ell},E+{\rm i} \varepsilon) |x_1 - y_1|} - 1)}{- {\rm i} p(\bk_{\ell}+\bK_{\ell},E+{\rm i} \varepsilon)}, \\
d_{\bk_{\ell}+\bK_{\ell}}^{(2)}(x_1,y_1,E+{\rm i} \varepsilon) & := & \frac{{\rm e}^{- p_I(\bk_{\ell}+\bK_{\ell},E+{\rm i} \varepsilon) |x_1 - y_1|} - {\rm e}^{-p_I(\bk_{\ell}+\bK_{\ell},E) |x_1 - y_1|}}{- {\rm i} p(\bk_{\ell}+\bK_{\ell},E+{\rm i} \varepsilon)},\\
d_{\bk_{\ell}+\bK_{\ell}}^{(3)}(x_1,y_1,E+{\rm i} \varepsilon) & := & {\rm e}^{-p_I(\bk_{\ell}+\bK_{\ell},E) |x_1 - y_1|} \left(
\frac{1}{- {\rm i} p(\bk_{\ell}+\bK_{\ell},E+{\rm i} \varepsilon)} - \frac{1}{p_I(\bk_{\ell}+\bK_{\ell},E)} \right),
\eeas
and notice for every $(\bk_{\ell},\bK_{\ell}) \in ( \cB_E^+ \times \cL^{\perp} ) \cup ( \cB_E^- \times ( \cL^{\perp} \backslash \{ 0 \}) )$ and $\varepsilon \in \re$, that we have
\bel{es4}
p_I(\bk_{\ell}+\bK_{\ell},E)=|p(\bk_{\ell}+\bK_{\ell},E)| \leq p_I(\bk_{\ell}+\bK_{\ell},E+{\rm i} \varepsilon) \leq p_I(\bk_{\ell}+\bK_{\ell},E) + \left( \frac{ |\varepsilon |}{2} \right)^{1 \slash 2},
\ee
from \eqref{rr1}, and consequently
\bel{es5}
| p(\bk_{\ell}+\bK_{\ell},E+{\rm i} \varepsilon)-p(\bk_{\ell}+\bK_{\ell},E) | \leq \frac{| \varepsilon |}{2 p_I(\bk_{\ell}+\bK_{\ell},E)},
\ee
since $(p(\bk_{\ell}+\bK_{\ell},E+{\rm i} \varepsilon)-p(\bk_{\ell}+\bK_{\ell},E))(p(\bk_{\ell}+\bK_{\ell},E+{\rm i} \varepsilon)+p(\bk_{\ell}+\bK_{\ell},E))= {\rm i} \varepsilon$.
Thus, by using that $| \sin u | \leq u$ for $u \geq 0$ (resp. ${\rm e}^{-u} - {\rm e}^{-v} \leq (v-u) {\rm e}^{-u}$ for $v \geq u$) in the estimation of $d_{\bk_{\ell}+\bK_{\ell}}^{(1)}(x_1,y_1,E+{\rm i} \varepsilon)$ (resp. $d_{\bk_{\ell}+\bK_{\ell}}^{(2)}(x_1,y_1,E+{\rm i} \varepsilon)$), we deduce from \eqref{es4}-\eqref{es5} for every $\varepsilon$, $x_1$ and $y_1$ in $\re$, that
\bel{es6}
| d_{\bk_{\ell}+\bK_{\ell}}(x_1,y_1,E+{\rm i} \varepsilon) | \leq {\rm e}^{-p_I(\bk_{\ell}+\bK_{\ell},E) |x_1 - y_1|} \left( \frac{|x_1- y_1|}{p_I(\bk_{\ell}+\bK_{\ell},E)^2}+ \frac{1}{2 p_I(\bk_{\ell}+\bK_{\ell},E)^3} \right) | \varepsilon |.
\ee
Fix $x_1$ in $\re$. The series $\sum_{\bK_{\ell} \in \cL^{\perp} \backslash \{ 0 \}} | d_{\bk_{\ell}+\bK_{\ell}}(x_1,y_1,E+{\rm i} \varepsilon) |^2$ is normally convergent for each $y_1 \in \re$. This comes from Lemma \ref{lm-r3} and  \eqref{es6}. So we get that  
$$
\int_{\cS \times (\re \times \cS)} \zeta_{\bk_{\ell}}^{(1)}(\bx,\by,E + {\rm i} \varepsilon) {\rm d} \bx_{\ell} {\rm d} \by = \sum_{\bK_{\ell} \in \cL^{\perp} \backslash \{ 0 \}} \int_{\re} | d_{\bk_{\ell}+\bK_{\ell}}(x_1,y_1,E+{\rm i} \varepsilon) |^2 {\rm d} y_1 \leq \frac{3 \alpha_7(\delta)}{2} \varepsilon^2, $$
from \eqref{es2} and \eqref{es6}.
In light of \eqref{es1}-\eqref{es2}, this yields
\bel{es7}
\| C_{\bk_{\ell}}^{(1)}(E + {\rm i} \varepsilon) - C_{\bk_{\ell}}^{(1)}(E) \|_{HS} \leq \left( \frac{3 \alpha_7(\delta)}{2} \right)^{1 \slash 2} \varrho | \varepsilon |,\ \bk_{\ell} \in \cB_E,\varepsilon \in \re,
\ee
where $\varrho:=\beta | \cS | \| W \|_{{\rm L}^{\infty}(\re \times \cS)} \| W \|_{{\rm L}^{2}(\re, {\rm L}^{\infty}(\cS))}$, by straightforward computations. Arguing in the same way, we get moreover that
\bel{es8}
\| C_{\bk_{\ell}}^{(0)}(E + {\rm i} \varepsilon) - C_{\bk_{\ell}}^{(0)}(E) \|_{HS} \leq
\left( \frac{3}{2 p_I(\bk_{\ell},E)^7} \right)^{1 \slash 2} \varrho | \varepsilon |,\ \bk_{\ell} \in \cB_E^+,\ \varepsilon \in \re.
\ee
Notice from \eqref{rr3}-\eqref{rr3b} that the picture is quite different for $\bk_{\ell} \in \cB_E^-$. Indeed, in this case it holds true that $p(\bk_{\ell},E) >0$ by \eqref{rr3}, and, for every $\varepsilon >0$,
\beas
p_R(\bk_{\ell},E \pm {\rm i} \varepsilon) & = & \pm \left( \frac{(p(\bk_{\ell},E)^4 + \varepsilon^2)^{1 \slash 2} + p(\bk_{\ell},E)^2}{2} \right)^{1 \slash 2}, \\
p_I(\bk_{\ell},E \pm {\rm i} \varepsilon) & = & \left( \frac{(p(\bk_{\ell},E)^4 + \varepsilon^2)^{1 \slash 2} - p(\bk_{\ell},E)^2}{2} \right)^{1 \slash 2},
\eeas
by \eqref{rr1}-\eqref{rr2}. As a consequence we have
\bel{es9}
| p(\bk_{\ell},E \pm {\rm i} \varepsilon) \mp p(\bk_{\ell},E) | \leq \varepsilon^{1 \slash 2},\ \bk_{\ell} \in \cB_E^-,\ \varepsilon \in \re_+^*.
\ee
Therefore, $| c_{\bk_{\ell}}^{(0)}(E \pm {\rm i} \varepsilon,\bx,\by) \mp c_{\bk_{\ell}}^{(0)}(E,\bx,\by) |$ being of the form $\beta W(\bx) W(\by) | \tilde{c}_{\bk_{\ell}}(E \pm {\rm i} \varepsilon,x_1,y_1) |$, with
$\tilde{c}_{\bk_{\ell}}(E \pm {\rm i} \varepsilon,x_1,y_1)=\sum_{j=1}^3\tilde{c}_{\bk_{\ell}}^{(j)}(E \pm {\rm i} \varepsilon,x_1,y_1)$, and
\beas
\tilde{c}_{\bk_{\ell}}^{(1)}(E \pm {\rm i} \varepsilon,x_1,y_1) & = &
\frac{{\rm e}^{{\rm i} p( \bk_{\ell}, E \pm {\rm i} \varepsilon) | x_1 - y_1 |} - {\rm e}^{{\rm i} p_R( \bk_{\ell}, E \pm {\rm i} \varepsilon) | x_1 - y_1 |}}{- {\rm i} p( \bk_{\ell}, E \pm {\rm i} \varepsilon)}, \\
\tilde{c}_{\bk_{\ell}}^{(2)}(E \pm {\rm i} \varepsilon,x_1,y_1) & = & \frac{{\rm e}^{{\rm i} p_R( \bk_{\ell}, E \pm {\rm i} \varepsilon) | x_1 - y_1 |} - {\rm e}^{\pm {\rm i} p( \bk_{\ell}, E) | x_1 - y_1 |}}{- {\rm i} p( \bk_{\ell}, E \pm {\rm i} \varepsilon)}, \\
\tilde{c}_{\bk_{\ell}}^{(3)}(E \pm {\rm i} \varepsilon,x_1,y_1) & = & {\rm e}^{\pm {\rm i} p( \bk_{\ell}, E) | x_1 - y_1 |} \left( \frac{1}{- {\rm i} p( \bk_{\ell}, E \pm {\rm i} \varepsilon)} - \frac{1}{- {\rm i} p( \bk_{\ell}, E)} \right),
\eeas
we then derive from \eqref{es9} for every $\bk_{\ell} \in \cB_E^-$, $\varepsilon \in \re_+^*$ and $\bx, \by \in \re \times \cS$, that
$$
| c_{\bk_{\ell}}^{(0)}(E \pm {\rm i} \varepsilon,\bx,\by) \mp c_{\bk_{\ell}}^{(0)}(E,\bx,\by) | \leq  \beta W(\bx) W(\by) \left( 2^{1 \slash 2} |x_1-y_1| + \frac{1}{p(\bk_{\ell},E)} \right) \frac{\varepsilon^{1 \slash 2}}{p(\bk_{\ell},E)}.
$$
Taking account of \eqref{r2b}, this entails
$$
\| C_{\bk_{\ell}}^{(0)}(E \pm {\rm i} \varepsilon) \mp C_{\bk_{\ell}}^{(0)}(E) \|_{HS} \leq \beta | \cS | \left( 2 \frac{\| W \|_{\cH_1}^2}{p(\bk_{\ell},E)} + \frac{1}{p(\bk_{\ell},E)^2} \right) ( 2 \varepsilon)^{1 \slash 2},\ \bk_{\ell} \in \cB_E^-,\ \varepsilon \in \re_+^*,
$$
by direct computations. Now the result follows immediately from this and \eqref{es7}-\eqref{es8}.


\section{Appendix B: LAP for $H_0(\bk_{\ell})$, $\bk_{\ell} \in \cB$}
\label{sec-laphz}

\subsection{Spectral decomposition and generalized Fourier coefficients}
\label{sec-spedeco}
For each $(\xi,\bk_{\ell}) \in \re \times \cB$ and $\bK_{\ell} \in \cL^{\perp}$, we introduce
\bel{hzan1}
\phi(\xi,\bk_{\ell}+\bK_{\ell};\bx) := \frac{1}{(2 \pi)^{1 \slash 2}} {\rm e}^{{\rm i} \xi  x_1} \varphi(\bk_{\ell}+\bK_{\ell};\bx_{\ell}),\  \bx=(x_1,\bx_{\ell}) \in \re \times \cS,
\ee
where
\bel{hzan2}
\varphi(\bk_{\ell}+\bK_{\ell};\bx_{\ell}) := \frac{1}{|\cS|^{1 \slash 2}} {\rm e}^{{\rm i} \langle \bk_{\ell} + \bK_{\ell} , \bx_{\ell} \rangle},\ \bx_{\ell} \in \cS,
\ee
then we define the generalized Fourier coefficient of any $u \in \cH$ as
\bea
\widetilde{u}(\xi,\bk_{\ell}+\bK_{\ell}) &  := &  \lim_{X \rightarrow +\infty} \langle u, \phi(\xi,\bk_{\ell}+\bK_{\ell}) \rangle_{{\rm L}^2((-X,X) \times \cS)} \nonumber \\
&  =& \frac{1}{ (2 \pi)^{1 \slash 2}} \lim_{X \rightarrow +\infty}  \int_{-X}^X {\rm e}^{-{\rm i} \xi  x_1} \langle u(x_1,.), \varphi(\bk_{\ell}+\bK_{\ell}) \rangle_{{\rm L}^2(\cS)} {\rm d} x_1. \label{hzan3}
\eea
For every $\bk_{\ell} \in \cB$ fixed, the set $\{ \phi(\xi,\bk_{\ell}+\bK_{\ell}),\ \bK_{\ell} \in \cL^{\perp},\  \xi  \in \re \}$ is a complete system of generalized eigenfunctions of $H_0(\bk_{\ell})$, in the sense that:
\begin{enumerate}[(a)]
\item $\cF_{\bk_{\ell}} : u \mapsto (\widetilde{u}(.,\bk_{\ell}+\bK_{\ell}))_{\bK_{\ell} \in \cL^{\perp}}$ is a unitary transform from $\cH$ onto $\bigoplus_{\bK_{\ell} \in \cL^{\perp}} {\rm L}^2(\re)$;
\item $\widetilde{f(H_0(\bk_{\ell})) u}(\xi,\bk_{\ell}+\bK_{\ell}) = f(\lambda(\xi,\bk_{\ell}+\bK_{\ell})) \widetilde{u}(\xi,\bk_{\ell}+\bK_{\ell})$ for any
$ (\xi,\bk_{\ell}) \in \re \times \cB$ and $\bK_{\ell} \in  \cL^{\perp}$, and any borelian function $f : \re \rightarrow \re$, where we have set
\bel{hzan4}
\lambda(\xi,\bk_{\ell}+\bK_{\ell}) := \xi^2 + | \bk_{\ell} + \bK_{\ell} |^2.
\ee 
\end{enumerate}
Notice from (a) that the following Parseval equality
\bel{hzan4b}
\| u \|_{\cH}^2 = \sum_{\bK_{\ell} \in \cL^{\perp}} \int_{\re} | \widetilde{u}(\xi,\bk_{\ell}+\bK_{\ell})|^2 {\rm d} \xi,\ \bk_{\ell} \in \cB,\ u \in \cH,
\ee
holds true, and from (b) that we have
\bel{hzan5}
\langle R_0(\bk_{\ell},z) u , v \rangle_{\cH} = \sum_{\bK_{\ell} \in \cL^{\perp}} \int_{\re} \frac{\widetilde{u}(\xi,\bk_{\ell}+\bK_{\ell}) \overline{\widetilde{v}(\xi,\bk_{\ell}+\bK_{\ell})}}{\lambda(\xi,\bk_{\ell}+\bK_{\ell}) - z} {\rm d} \xi,\ \bk_{\ell} \in \cB,\ u, v \in \cH,
\ee
for each $z \in \C$ with $\Pim{z} \neq 0$. 

Actually \eqref{hzan5} is the starting point in the derivation of the LAP for $H_0(\bk_{\ell})$, $\bk_{\ell} \in \cB_E$, stated in Proposition \ref{pr-lapz}. Its proof relies on the following a priori H\"older estimates of the generalized Fourier coefficients \eqref{hzan3} of suitably decreasing functions.
\begin{lemma}
\label{lm-holz}
Assume $\sigma >1 \slash 2$. Then, for every $u \in \cH_{\sigma} := \{ v \in \cH,\  (1+ x_1^2)^{\sigma \slash 2} v \in \cH \}$, it holds true that:
\begin{enumerate}[(a)]
\item
$| \widetilde{u}(\xi,\bk_{\ell}+\bK_{\ell}) | \leq c_{\sigma} \| u\|_{\cH_{\sigma}}$ for all $(\xi,\bk_{\ell}) \in \re \times \cB$ and $\bK_{\ell} \in \cL^{\perp}$, where we have set $c_{\sigma}:= (2 \pi)^{-1 \slash 2} \left( \int_{\re} (1+x_1^2)^{-\sigma} {\rm d} x_1 \right)^{1 \slash 2}$;
\item For each $\alpha \in [0,1] \cap  [0,\sigma-1 \slash 2)$ there exists a constant $c_{\sigma,\alpha}>0$ depending only on $\cS$,  $\sigma$ and $\alpha$, such that for all $ \xi, \xi' \in \re$, $\bk_{\ell} \in \cB$ and $\bK_{\ell} \in \cL^{\perp}$, we have:
$$ | \widetilde{u}(\xi,\bk_{\ell}+\bK_{\ell}) - \widetilde{u}(\xi',\bk_{\ell}+\bK_{\ell}) | \leq c_{\sigma,\alpha} | \xi - \xi'|^{\alpha} \| u \|_{\cH_{\sigma}}.$$
\end{enumerate}
\end{lemma}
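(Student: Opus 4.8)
The plan is to reduce both estimates to classical bounds for the one-dimensional Fourier transform of weighted ${\rm L}^2$ functions. First I would fix $\bk_{\ell} \in \cB$ and $\bK_{\ell} \in \cL^{\perp}$ and set
$$ g(x_1) := \langle u(x_1,.), \varphi(\bk_{\ell}+\bK_{\ell}) \rangle_{{\rm L}^2(\cS)},\quad x_1 \in \re, $$
so that, by \eqref{hzan3}, $\widetilde{u}(\xi,\bk_{\ell}+\bK_{\ell}) = (2\pi)^{-1/2} \lim_{X \to +\infty} \int_{-X}^X {\rm e}^{-{\rm i} \xi x_1} g(x_1) {\rm d} x_1$ is, up to the $(2\pi)^{-1/2}$ normalization, the Fourier transform of $g$. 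The key point is that, $\{ \varphi(\bk_{\ell}+\bK_{\ell}) \}_{\bK_{\ell} \in \cL^{\perp}}$ being orthonormal in ${\rm L}^2(\cS)$, Bessel's inequality furnishes the pointwise bound $| g(x_1) | \leq \| u(x_1,.) \|_{{\rm L}^2(\cS)}$ for a.e. $x_1 \in \re$. Multiplying by $(1+x_1^2)^{\sigma}$ and integrating over $\re$ then gives the uniform (in $\bk_{\ell}$ and $\bK_{\ell}$) estimate
$$ \int_{\re} (1+x_1^2)^{\sigma} | g(x_1) |^2 {\rm d} x_1 \leq \| u \|_{\cH_{\sigma}}^2. $$
Since $\sigma > 1/2$ makes $(1+x_1^2)^{-\sigma}$ integrable, Cauchy--Schwarz shows in passing that $g \in {\rm L}^1(\re)$, so the limit defining $\widetilde{u}$ is an honest absolutely convergent integral.

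With this in hand, claim (a) is immediate: splitting $1 = (1+x_1^2)^{-\sigma/2} (1+x_1^2)^{\sigma/2}$ and applying Cauchy--Schwarz,
$$ | \widetilde{u}(\xi,\bk_{\ell}+\bK_{\ell}) | \leq (2\pi)^{-1/2} \int_{\re} | g(x_1) | {\rm d} x_1 \leq c_{\sigma} \left( \int_{\re} (1+x_1^2)^{\sigma} | g(x_1) |^2 {\rm d} x_1 \right)^{1/2} \leq c_{\sigma} \| u \|_{\cH_{\sigma}}, $$
with $c_{\sigma}$ precisely the constant in the statement.

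For claim (b) I would write the difference as $(2\pi)^{-1/2} \int_{\re} ( {\rm e}^{-{\rm i} \xi x_1} - {\rm e}^{-{\rm i} \xi' x_1} ) g(x_1) {\rm d} x_1$ and factor out ${\rm e}^{-{\rm i} \xi' x_1}$, reducing the oscillatory factor to ${\rm e}^{-{\rm i}(\xi-\xi')x_1} - 1$. The tool here is the elementary interpolation inequality $| {\rm e}^{{\rm i}\theta}-1 | \leq 2^{1-\alpha} | \theta |^{\alpha}$, valid for all $\theta \in \re$ and $\alpha \in [0,1]$ (it follows from $| {\rm e}^{{\rm i}\theta}-1 | \leq \min(2,|\theta|)$ together with $\min(2,t) \leq 2^{1-\alpha} t^{\alpha}$). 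This yields $| {\rm e}^{-{\rm i}\xi x_1} - {\rm e}^{-{\rm i}\xi' x_1} | \leq 2^{1-\alpha} | \xi-\xi' |^{\alpha} | x_1 |^{\alpha}$, hence
$$ | \widetilde{u}(\xi,\bk_{\ell}+\bK_{\ell}) - \widetilde{u}(\xi',\bk_{\ell}+\bK_{\ell}) | \leq \frac{2^{1-\alpha}}{(2\pi)^{1/2}} | \xi-\xi' |^{\alpha} \int_{\re} | x_1 |^{\alpha} | g(x_1) | {\rm d} x_1. $$
A second Cauchy--Schwarz, again peeling off the weight $(1+x_1^2)^{\sigma/2}$, bounds the remaining integral by $( \int_{\re} | x_1 |^{2\alpha} (1+x_1^2)^{-\sigma} {\rm d} x_1 )^{1/2} \| u \|_{\cH_{\sigma}}$, and taking $c_{\sigma,\alpha}$ to be $2^{1-\alpha}(2\pi)^{-1/2}$ times this square root closes the argument.

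The one place genuine care is needed --- and the natural entry point for the hypothesis $\alpha \in [0,1] \cap [0,\sigma-1/2)$ --- is the finiteness of $\int_{\re} | x_1 |^{2\alpha} (1+x_1^2)^{-\sigma} {\rm d} x_1$. Its integrand decays like $| x_1 |^{2(\alpha-\sigma)}$ at infinity, so the integral converges exactly when $\alpha < \sigma - 1/2$, matching the admissible range, while the constraint $\alpha \leq 1$ is what makes the interpolation inequality available. Since all constants produced are manifestly independent of $\xi$, $\xi'$, $\bk_{\ell}$ and $\bK_{\ell}$, no serious obstacle remains beyond the bookkeeping of these two Cauchy--Schwarz steps.
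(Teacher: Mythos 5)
Your proof is correct and follows essentially the same route as the paper's: part (a) by Cauchy--Schwarz against the weight $(1+x_1^2)^{\sigma/2}$, and part (b) by the interpolation bound $|{\rm e}^{{\rm i}\theta}-1|\leq 2^{1-\alpha}|\theta|^{\alpha}$ followed by a second weighted Cauchy--Schwarz, with the condition $\alpha<\sigma-1\slash 2$ entering exactly where it does in the paper, namely the convergence of the weighted integral. The only difference is cosmetic: you collapse the $\cS$-integration first (via Bessel) to reduce to a scalar function of $x_1$, whereas the paper estimates the $x_1$-integral at fixed $\bx_{\ell}$ and integrates over $\cS$ at the end.
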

\begin{proof}
The first claim follows readily from \eqref{hzan1}-\eqref{hzan3} and the Cauchy-Schwarz inequality. Further,
as
$${\rm e}^{-{\rm i} \xi x_1} - {\rm e}^{-{\rm i} \xi' x_1} = \int_0^1 \frac{{\rm d}}{{\rm d} t}( {\rm e}^{-{\rm i} x_1(t \xi + (1-t) \xi')} ) {\rm d} t =  -{\rm i} x_1  (\xi-\xi') \int_0^1{\rm e}^{-{\rm i} x_1(t \xi + (1-t) \xi')} {\rm d} t,$$
we have
$ | {\rm e}^{-{\rm i} \xi x_1} - {\rm e}^{-{\rm i} \xi' x_1}| \leq |x_1| | \xi-\xi' |$, whence 
\bel{hzan5star}
| {\rm e}^{-{\rm i} \xi x_1} - {\rm e}^{-{\rm i} \xi' x_1}| \leq 2^{1-\alpha} | {\rm e}^{-{\rm i} \xi x_1} - {\rm e}^{-{\rm i} \xi' x_1}|^{\alpha} \leq 2^{1-\alpha} |x_1|^{\alpha} | \xi - \xi' |^{\alpha},\ \alpha \in [0,1].
\ee
Moreover, since $({\rm e}^{-{\rm i} \xi x_1} - {\rm e}^{-{\rm i} \xi' x_1}) u(\bx) = (1+x_1^2)^{-\sigma \slash 2} ({\rm e}^{-{\rm i} \xi x_1} - {\rm e}^{-{\rm i} \xi' x_1})  (1+x_1^2)^{\sigma \slash 2} u(\bx)$, it follows from \eqref{hzan5star} that
\begin{multline*}
 \left| \int_{\re} ({\rm e}^{-{\rm i} \xi x_1} - {\rm e}^{-{\rm i} \xi' x_1}) u(x_1,\bx_{\ell}) {\rm d} x_1 \right|  
 \leq   \left(  \int_{\re} \frac{|{\rm e}^{-{\rm i} \xi x_1} - {\rm e}^{-{\rm i} \xi' x_1}|^2}{(1+x_1^2)^{\sigma}} {\rm d} x_1 \right)^{1 \slash 2}  \left(  \int_{\re} (1+x_1^2)^{\sigma} |u(x_1,\bx_{\ell})|^2  {\rm d} x_1 \right)^{1 \slash 2}  \\
 \leq  2^{1-\alpha} \left(  \int_{\re} \frac{{\rm d} x_1}{(1+x_1^2)^{\sigma-\alpha}} \right)^{1 \slash 2} | \xi - \xi' |^{\alpha} \| u(.,\bx_{\ell}) \|_{{\rm L}^{2,\sigma}(\re)},\ \bx_{\ell} \in \cS,\ \alpha \in [0, \sigma- 1 \slash 2).
\end{multline*}
From this and \eqref{hzan1}-\eqref{hzan3} then follows that
$$
| \widetilde{u}(\xi,\bk_{\ell}+\bK_{\ell}) - \widetilde{u}(\xi',\bk_{\ell}+\bK_{\ell}) | \leq  \frac{2^{1-\alpha}}{(2 \pi)^{1 \slash 2}} \left(  \int_{\re} \frac{{\rm d} x_1}{(1+x_1^2)^{\sigma-\alpha}} \right)^{1 \slash 2} | \xi - \xi'  |^{\alpha} \| u \|_{\cH_{\sigma}},
$$
by integrating over $\cS$. This terminates the proof.
\end{proof}

\subsection{LAP for $\bk_{\ell} \in \cB$}
For every  $\bk_{\ell} \in \cB$ and $\sigma > 1 \slash 2$, $u \mapsto \tilde{u}(0,\bk_{\ell})$ is  a continuous linear form on $\cH_{\sigma}$ according to \eqref{hzan1}-\eqref{hzan3} and Lemma \ref{lm-holz}(a), hence
$$
\mathcal{NH}_{\sigma}(\bk_{\ell}) := \{ u \in \cH_{\sigma},\ \tilde{u}(0,\bk_{\ell})=(2 \pi)^{-1 \slash 2} | \cS |^{-1 \slash 2} \int_{\re \times \cS} {\rm e}^{-{\rm i} \langle \bk_{\ell} , \bx_{\ell} \rangle} u(\bx) {\rm d} \bx= 0 \},$$
is a closed hyperplane of $\cH_{\sigma}$. Hence we may not regard its (topological) dual set $\mathcal{NH}_{\sigma}(\bk_{\ell})'$ as a subspace of $\cH_{\sigma}'=\cH_{-\sigma}$, in the main result of Appendix B:

\begin{pr}
\label{pr-lapz}
Let $E \in (0,E_{\delta})$ where $\delta>0$.
\begin{enumerate}[(a)]
\item For all $\bk_{\ell} \in \cB_E^+$ there exists $R_0(\bk_{\ell},E)=R_0(\bk_{\ell},E \pm {\rm i} 0):=\lim_{\varepsilon \rightarrow 0} R_0(\bk_{\ell},E +{\rm i} \varepsilon)$ in the $B(\cH)$ norm sense.
If $\bk_{\ell} \in \cB_E^-$ (resp. $\bk_{\ell} \in \cB \backslash \cB_E$) there is $R_0(\bk_{\ell},E \pm {\rm i} 0):=\lim_{\varepsilon \downarrow 0} R_0(\bk_{\ell},E \pm {\rm i} \varepsilon)$ in the $B(\cH_{\sigma},\cH_{-\sigma})$ norm sense (resp. the $B(\mathcal{NH}_{\sigma}(\bk_{\ell}),\mathcal{NH}_{\sigma}(\bk_{\ell})')$ norm sense) provided
$\sigma > 1 \slash 2$ (resp. $\sigma > 1$).
\item For all $\bk_{\ell} \in \cB_E^+$ we have $\lim_{\varepsilon \downarrow 0} \varepsilon R_0(\bk_{\ell},E \pm {\rm i} \varepsilon)=0$ in the $B(\cH)$ norm sense. If $\bk_{\ell} \in \cB \backslash \cB_E^+$ then for every
$u \in \cH$ it holds true that
$\lim_{\varepsilon \downarrow 0} \varepsilon R_0(\bk_{\ell},E \pm {\rm i} \varepsilon) u =0$  in $\cH$ weakly.
\end{enumerate}
\end{pr}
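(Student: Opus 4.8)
The plan is to read off everything from the spectral representation \eqref{hzan5} and to isolate the single potentially singular term in the series over $\bK_{\ell}\in\cL^{\perp}$. By \eqref{r6b} one has $\lambda(\xi,\bk_{\ell}+\bK_{\ell})-E=\xi^2+|\bk_{\ell}+\bK_{\ell}|^2-E\geq \delta E_{\delta}>0$ for every $\bK_{\ell}\neq 0$, uniformly in $\xi\in\re$, $\bk_{\ell}\in\cB$ and $\varepsilon$; hence the partial sum over $\bK_{\ell}\neq 0$ defines, by dominated convergence and the Parseval identity \eqref{hzan4b}, a bounded operator on $\cH$ whose kernel extends continuously (indeed analytically) to $z=E$. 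Thus only the $\bK_{\ell}=0$ term $\int_{\re}\tilde u(\xi,\bk_{\ell})\overline{\tilde v(\xi,\bk_{\ell})}\,(\xi^2+|\bk_{\ell}|^2-E\mp{\rm i}\varepsilon)^{-1}\,{\rm d}\xi$ needs separate treatment, organised according to the sign of $|\bk_{\ell}|^2-E$. When $\bk_{\ell}\in\cB_E^+$ this term is nonsingular too, since $\lambda(\xi,\bk_{\ell})-E\geq|\bk_{\ell}|^2-E>0$, i.e. $E<\inf\sigma(H_0(\bk_{\ell}))$ and $E\in\rho(H_0(\bk_{\ell}))$. First I would invoke the first resolvent identity $R_0(\bk_{\ell},E+{\rm i}\varepsilon)-R_0(\bk_{\ell},E)={\rm i}\varepsilon R_0(\bk_{\ell},E+{\rm i}\varepsilon)R_0(\bk_{\ell},E)$ together with the uniform bound $\|R_0(\bk_{\ell},E+{\rm i}\varepsilon)\|_{B(\cH)}\leq \mathrm{dist}(E,\sigma(H_0(\bk_{\ell})))^{-1}$ to obtain both the $B(\cH)$-norm convergence in (a) and the vanishing $\|\varepsilon R_0(\bk_{\ell},E\pm{\rm i}\varepsilon)\|_{B(\cH)}\to 0$ in (b).

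For $\bk_{\ell}\in\cB_E^-$ the denominator of the $\bK_{\ell}=0$ term has the two simple real zeros $\xi=\pm p(\bk_{\ell},E)$ with $p(\bk_{\ell},E)=(E-|\bk_{\ell}|^2)^{1/2}>0$. The plan is to localise near each pole and apply the classical Sokhotski--Plemelj limit for H\"older continuous densities: Lemma \ref{lm-holz}(b) supplies, for $\sigma>1/2$ and any $\alpha\in(0,\sigma-1/2)$, the uniform bound $|\tilde u(\xi,\bk_{\ell})-\tilde u(\xi',\bk_{\ell})|\leq c\,|\xi-\xi'|^{\alpha}\|u\|_{\cH_{\sigma}}$, which guarantees that $\lim_{\varepsilon\downarrow 0}\int_{\re}\tilde u\,\overline{\tilde v}\,(\xi\mp p\mp{\rm i}\varepsilon)^{-1}\,{\rm d}\xi$ exists and equals a principal value plus $\pm{\rm i}\pi$ times the residue at $\xi=\pm p$. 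The resulting limit is a bounded sesquilinear form on $\cH_{\sigma}\times\cH_{\sigma}$, which is exactly convergence of $R_0(\bk_{\ell},E\pm{\rm i}\varepsilon)$ in $B(\cH_{\sigma},\cH_{-\sigma})$.

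The genuinely delicate case, and the one I expect to be the main obstacle, is $\bk_{\ell}\in\cB\backslash\cB_E$, where $|\bk_{\ell}|^2=E$, the two simple poles coalesce into the \emph{double} pole $\xi^2\mp{\rm i}\varepsilon$ at $\xi=0$, and the $\xi^{-2}$ singularity is no longer integrable against a generic H\"older numerator. This is precisely why one passes to the hyperplane $\mathcal{NH}_{\sigma}(\bk_{\ell})$ and requires $\sigma>1$: for $u,v\in\mathcal{NH}_{\sigma}(\bk_{\ell})$ one has $\tilde u(0,\bk_{\ell})=\tilde v(0,\bk_{\ell})=0$, so fixing $\alpha\in(1/2,1)\cap[0,\sigma-1/2)$ (nonempty since $\sigma>1$) and applying Lemma \ref{lm-holz}(b) gives $|\tilde u(\xi,\bk_{\ell})|=|\tilde u(\xi,\bk_{\ell})-\tilde u(0,\bk_{\ell})|\leq c\,|\xi|^{\alpha}\|u\|_{\cH_{\sigma}}$, so the numerator is $O(|\xi|^{2\alpha})$ with $2\alpha>1$ near the origin. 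The elementary bound $|\xi^2\mp{\rm i}\varepsilon|^{-1}\leq\xi^{-2}$ then produces an $\varepsilon$-independent integrable majorant, and the dominated convergence theorem yields the limit as a bounded form on $\mathcal{NH}_{\sigma}(\bk_{\ell})\times\mathcal{NH}_{\sigma}(\bk_{\ell})$, establishing (a) in this final case.

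Finally, for the weak statement in (b) when $\bk_{\ell}\in\cB\backslash\cB_E^+$, I would start from the uniform pointwise bound $\bigl|\varepsilon(\lambda(\xi,\bk_{\ell}+\bK_{\ell})-E\mp{\rm i}\varepsilon)^{-1}\bigr|=\varepsilon((\lambda-E)^2+\varepsilon^2)^{-1/2}\leq 1$, which tends to $0$ for every $(\xi,\bK_{\ell})$ with $\lambda\neq E$, hence for a.e. $\xi$. Since $\sum_{\bK_{\ell}}\int_{\re}|\tilde u\,\overline{\tilde v}|\,{\rm d}\xi\leq\|u\|_{\cH}\|v\|_{\cH}$ by Cauchy--Schwarz and \eqref{hzan4b}, the product $|\tilde u\,\overline{\tilde v}|$ is an $\varepsilon$-independent dominating function, and dominated convergence gives $\langle\varepsilon R_0(\bk_{\ell},E\pm{\rm i}\varepsilon)u,v\rangle_{\cH}\to 0$ for all $v\in\cH$, i.e. the asserted weak convergence to $0$. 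The only real subtlety throughout is the rigorous passage to the boundary values at the singular $\bK_{\ell}=0$ term, which the H\"older estimates of Lemma \ref{lm-holz} are tailored to control.
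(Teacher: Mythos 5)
Your overall architecture coincides with the paper's: both proofs start from the spectral representation \eqref{hzan5}, use \eqref{r6b} to dispose of all the $\bK_{\ell} \neq 0$ terms at once, settle the case $\bk_{\ell} \in \cB_E^+$ by the first resolvent identity (since $E$ lies in the resolvent set of $H_0(\bk_{\ell})$), and control the singular $\bK_{\ell}=0$ term through the H\"older estimates of Lemma \ref{lm-holz}, with the vanishing $\tilde{u}(0,\bk_{\ell})=0$ and the hypothesis $\sigma>1$ taming the double pole exactly as you describe. Part (b) is argued identically to the paper (dominated convergence against the $\varepsilon$-independent majorant $|\tilde{u}\,\overline{\tilde{v}}|$), and there the proposition only claims weak convergence, so that argument is complete.

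There is, however, a genuine gap in the mode of convergence you deliver in part (a) for $\bk_{\ell} \in \cB \backslash \cB_E^+$. The proposition asserts convergence of $R_0(\bk_{\ell},E\pm {\rm i}\varepsilon)$ in the \emph{operator norm} of $B(\cH_{\sigma},\cH_{-\sigma})$, resp. $B(\mathcal{NH}_{\sigma}(\bk_{\ell}),\mathcal{NH}_{\sigma}(\bk_{\ell})')$. What your argument produces is, for each fixed pair $(u,v)$, convergence of the matrix elements $\langle R_0(\bk_{\ell},E\pm {\rm i}\varepsilon)u,v\rangle_{\cH}$ --- via Sokhotski--Plemelj when $\bk_{\ell}\in\cB_E^-$ and via the dominated convergence theorem when $|\bk_{\ell}|^2=E$ --- together with boundedness of the limiting form. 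That is weak (form) convergence; your sentence ``which is exactly convergence of $R_0$ in $B(\cH_{\sigma},\cH_{-\sigma})$'' conflates the two notions. Norm convergence requires the error to be small \emph{uniformly} over the unit balls, and the dominated convergence theorem, being non-quantitative, cannot supply this in the double-pole case. The gap is fillable precisely because the H\"older constants in Lemma \ref{lm-holz}(b) scale like $\|u\|_{\cH_{\sigma}}\|v\|_{\cH_{\sigma}}$: for instance, when $|\bk_{\ell}|^2=E$ one can use $\left|(\xi^2\mp {\rm i}\varepsilon)^{-1}-\xi^{-2}\right| = \varepsilon\,\xi^{-2}(\xi^4+\varepsilon^2)^{-1/2}$, integrate it against $|h(\xi)|\leq c\,|\xi|^{2\alpha}\|u\|_{\cH_{\sigma}}\|v\|_{\cH_{\sigma}}$ splitting at $|\xi|=\varepsilon^{1/2}$, and obtain an error $O\big(\varepsilon^{(2\alpha-1)/2}\big)\|u\|_{\cH_{\sigma}}\|v\|_{\cH_{\sigma}}$, which does give norm convergence. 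The paper secures this uniformity in one stroke: after the change of variables $\lambda=\xi^2$ it invokes the Plemelj--Privalov theorem, which yields H\"older continuity of $z \mapsto \langle R_0(\bk_{\ell},z)u,v\rangle_{\cH}$ up to the real axis with a constant proportional to $\|u\|_{\cH_{\sigma}}\|v\|_{\cH_{\sigma}}$ but otherwise independent of $(u,v)$, hence a Cauchy criterion in operator norm. You should either quantify your Sokhotski--Plemelj and dominated-convergence steps along these lines or adopt that route; as written, the final claim of (a) does not follow.
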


\begin{proof} 
We prove the successively the claims (a) and (b).\\ 
(a) For $\bk_{\ell} \in \cB_E^+$, the result follows readily from the first resolvent formula, since $\sigma( H_0(\bk_{\ell}) ) = [ | \bk_{\ell} |^2 , +\infty)$. Therefore it is sufficient to examine the case $\bk_{\ell} \in \cB \backslash \cB_E^+$.
We shall actually prove that the function $z \mapsto R_0(\bk_{\ell},z)$ is uniformly continuous in $\C^{\pm} \cap \cK_E$, 
for some appropriate compact neighborhood $\cK_E$ of $E$ in $\C$, where $\C^{\pm}:=\{ \zeta \in \C,\ \pm \Pim{\zeta} >0 \}$.
To this purpose we fix
$u, v \in \cH$, set $h(\xi,\bk_{\ell}+\bK_{\ell}) := \tilde{u}(\xi,\bk_{\ell}+\bK_{\ell}) \overline{\tilde{v}(\xi,\bk_{\ell}+\bK_{\ell})}$ for all $\bK_{\ell} \in \cL^{\perp}$, and, with reference to \eqref{hzan4} and \eqref{hzan4}, introduce the integrals
\bel{hzan6}
r_{\bK_{\ell}}(z):= \frac{h(\xi,\bk_{\ell}+\bK_{\ell})}{\lambda(\xi,\bk_{\ell}+\bK_{\ell})-z} {\rm d} \xi,\ z \in \C^{\pm}.
\ee 
Let $\cK_E := \{ z \in \C,\ | \Pre{z} - E | \leq d\ {\rm and}\ | \Pim{z} | \leq 1 \}$ where
$$ d:=\left\{ \begin{array}{ll} 
\delta E_{\delta} \slash 2 & {\rm if}\ \bk_{\ell} \in \cB \backslash \cB_E \\
\min((E - | \bk_{\ell} |^2) \slash 4, \delta E_{\delta} \slash 2) & {\rm if}\ \bk_{\ell} \in \cB_E^-. \end{array} \right. $$
We have $| \lambda(\xi,\bk_{\ell}+\bK_{\ell}) - z | \geq 2 d$ for every $\xi \in \re$, $\bK_{\ell} \in \cL^{\perp} \backslash \{ 0 \}$
and $z \in \cK_E$ by \eqref{r6b}, hence
\bel{hzan7}
\left| \sum_{\bK_{\ell} \in \cL^{\perp} \backslash \{ 0 \} } (r_{\bK_{\ell}}(z') - r_{\bK_{\ell}}(z)) \right| \leq \frac{\| u \|_{\cH} \| v \|_{\cH}}{4d^2},\ z, z' \in \cK_E,
\ee
from \eqref{hzan4b}. 
Thus we are left with the task of examining the behaviour for $z \in \C^{\pm} \cap \cK_E$ of 
$$r_0(z)= \sum_{\zeta=+,-} r_{0,\zeta}(z)\ {\rm where}\ r_{0,\zeta}(z):= \int_0^{+\infty} \frac{h(\zeta \xi,\bk_{\ell})}{\lambda(\xi,\bk_{\ell})-z} {\rm d} \xi,\ \zeta=+,-.$$
We treat the the two cases $\bk_{\ell} \in \cB_E^-$ and $\bk_{\ell} \in \cB \backslash \cB_E$ separately.\\
1. We start with $\bk_{\ell} \in \cB_E^-$. Setting $\xi_0=\xi_0(\bk_{\ell}):=(E - | \bk_{\ell}|^2)^{1 \slash 2}$, we
introduce a function $\chi \in {\rm C}^1(\re_+;[0,1])$ verifying 
$$ \chi(\xi)= \left\{ \begin{array}{cl} 1 & {\rm if}\ \xi \in [(\xi_0^2-2d)^{1 \slash 2}, (\xi_0^2+2d)^{1 \slash 2}] \\ 0 & {\rm if}\ \xi \in \re_+ \backslash ((\xi_0^2-3d)^{1 \slash 2}, (\xi_0^2+3d)^{1 \slash 2}),
\end{array} \right. $$
and decompose $r_{0,\zeta}(z)$, $\zeta=+,-$, into the sum: 
\bea
r_{0,\zeta}(z) & = &  \int_0^{+\infty} \frac{(1 - \chi(\xi)) h(\zeta \xi,\bk_{\ell})}{\lambda(\xi,\bk_{\ell})-z} {\rm d} \xi + \int_0^{+\infty} \frac{\chi(\xi) h(\zeta \xi,\bk_{\ell})}{\lambda(\xi,\bk_{\ell})-z} {\rm d} \xi \nonumber \\
& := & \mathfrak{a}_{\zeta}(z) + \mathfrak{b}_{\zeta}(z). \label{hzan8}
\eea
Taking into account that $| \xi^2 - (z-| \bk_{\ell}|^2) | \geq d>0$ for every $\xi \in {\rm supp}(1-\chi)$ and $z \in \cK_E$, we deduce from \eqref{hzan4b} that
\bel{hzan8b}
| \mathfrak{a}_{\zeta}(z') - \mathfrak{a}_{\zeta}(z) | \leq \frac{|z'-z|}{d^2} \| u \|_{\cH} \| v \|_{\cH},\ z,z' \in \cK_E,\ \zeta =+,-.
\ee
Further, the remaining term $\mathfrak{b}_{\zeta}(z)$ is brought into the form
$$ \mathfrak{b}_{\zeta}(z) = \int_{I} \frac{g_{\zeta}(\lambda)}{\lambda - (z - | \bk_{\ell} |^2)} {\rm d} \lambda\ {\rm where}\ 
g_{\zeta}(\lambda):=\frac{\chi(\lambda^{1 \slash 2}) h(\zeta \lambda^{1 \slash 2},\bk_{\ell})}{2 \lambda^{1 \slash 2}}, $$
and $I:=(\xi_0^2-3d, \xi_0^2+3d)$, 
by performing the change of variable $\lambda=\xi^2$ in the second integral of \eqref{hzan7}. Bearing in mind that $\overline{I}$ is at distance $d>0$ from 0, and then applying Lemma \ref{lm-holz}(b), we get that $g_{\zeta}$ is H\"older continuous in $\overline{I}$. Namely, $\alpha$ being fixed in $[0,1] \cap (0,\sigma-1 \slash 2)$, we may find 
a constant $A_g>0$, independent of $u$ and $v$, such that we have
$$ | g_{\zeta}(\lambda') - g_{\zeta}(\lambda) | \leq A_g |\lambda' - \lambda|^{\alpha} \| u \|_{\cH_{\sigma}} \| v \|_{\cH_{\sigma}},\ 
\lambda, \lambda' \in \overline{I},\ \zeta = +,-, $$
It follows from this, the identities $g_{\zeta}(\xi_0^2 \pm 3d)=0$, and the Plemelj-Privalov theorem (see \cite{Mu}[Part 1, Chap. 2, \S 22]) that $\mathfrak{b}_{\zeta}$ is extendable to an $\alpha$-H\"older continuous function, also denoted by $\mathfrak{b}_{\zeta}$, in $V^{\pm}:=\{ z \in \overline{\C^{\pm}},\ \Pre{z} \in \overline{I} \}$: there exists $c \in {\rm C}^0((V^{\pm})^2;\re_+)$ satisfying
\bel{hzan8c}
| \mathfrak{b}_{\zeta}(z')-\mathfrak{b}_{\zeta}(z) | \leq c(z,z') |z'-z|^{\alpha} \| u \|_{\cH_{\sigma}} \| v \|_{\cH_{\sigma}},\ z, z' \in V^{\pm}.
\ee
Now, putting \eqref{hzan5} and \eqref{hzan6}-\eqref{hzan8c} together, we end up getting that
$$ | \langle (R_0(\bk_{\ell},z') -  R_0(\bk_{\ell},z)) u , v \rangle_{\cH} | \leq C | z' - z|^{\alpha} \| u \|_{\cH_{\sigma}} \| v \|_{\cH_{\sigma}},\ z , z' \in \overline{\C^{\pm}} \cap \cK_E, $$
for some constant $C$ independent of $u$ and $v$, since $\overline{\C^{\pm}} \cap \cK_E$ is a compact subset of $V^{\pm}$. This yields
$$ \| R_0(\bk_{\ell},z') -  R_0(\bk_{\ell},z)  \|_{\cB(\cH_{\sigma},\cH_{-\sigma})} \leq  C | z' - z|^{\alpha},\
z, z' \in \overline{\C^{\pm}} \cap \cK_E, $$
hence the result.\\
2. Let us now consider the case where $\bk_{\ell} \in \cB \backslash \cB_E$. We define 
$\mathfrak{a}_{\zeta}$ and
$\mathfrak{b}_{\zeta}$ as in \eqref{hzan8}, where $\chi \in {\rm C}^1(\re_+;[0,1])$ satisfies
$$ \chi(\xi)= \left\{ \begin{array}{cl} 1 & {\rm if}\ \xi \in [(0, (2d)^{1 \slash 2}] \\ 0 & {\rm if}\ \xi \in \re_+ \backslash (0, (3d)^{1 \slash 2}).
\end{array} \right. $$
As $|\xi^2-(z-E)| \geq d >0$ for every $\xi \in {\rm supp}(1 - \chi)$ and $z \in \cK_E$ by standard computations, it is easy to check that \eqref{hzan8b} still holds true for $|\bk_{\ell}|^2=E$. Similarly, we find that
$\mathfrak{b}_{\zeta}(z) = \int_{I} g_{\zeta}(\lambda) \slash (\lambda - (z - E)) {\rm d} \lambda$ where $I=(0,3d)$ and
$g_{\zeta}(\lambda)$ is unchanged. The next step of the proof involves choosing $\alpha \in [0,1] \cap (1 \slash 2,\sigma- 1 \slash 2)$ and recalling that $\tilde{u}(0,\xi)=\tilde{v}(0,\xi)=0$, in such a way that
$$ |h(\zeta (\lambda)^{1 \slash 2},\bk_{\ell}) | \leq c_{\sigma,\alpha}^2 \| u \|_{\cH_{\sigma}} \| v \|_{\cH_{\sigma}} 
\lambda^{\alpha},\ \lambda \in \re_+,\ \zeta = +,-,$$
according to Lemma \ref{lm-holz}(b). From this then follows that $\xi \mapsto g_{\zeta}(\xi,\bk_{\ell})$, $\zeta=+,-$, can be extended to an $(\alpha-1 \slash 2)$-H\"older continuous function in $\overline{I}$, verifying $g_{\zeta}(3d)=0$.
Arguing as before we thus get that
$$ \| R_0(\bk_{\ell},z') -  R_0(\bk_{\ell},z)  \|_{\cB(\mathcal{NH}_{\sigma}(\bk_{\ell}),\mathcal{NH}_{\sigma}(\bk_{\ell})')} \leq  C | z' - z|^{\alpha-1 \slash 2},\
z, z' \in \overline{\C^{\pm}} \cap \cK_E, $$
for some constant $C>0$, which yields the result.\\
(b) The claim (b) is an immediate consequence of (a) for $\bk_{\ell} \in \cB_E^+$. If $\bk_{\ell} \in \cB \backslash \cB_E^+$, the result follows from the dominated convergence theorem as we have $\lim_{\varepsilon \rightarrow 0}  \varepsilon  h (\xi,\bk_{\ell}) \slash (\lambda(\xi,\bk_{\ell})- E - {\rm i} \varepsilon) = 0$ for a.e. $\xi \in \re$, and 
$| \varepsilon h (\xi,\bk_{\ell}) | \slash | \lambda(\xi,\bk_{\ell})- E - {\rm i} \varepsilon) | \leq | h ( \xi,\bk_{\ell}) |$ for all $\varepsilon \in \re$ and a.e. $\xi \in \re$, with $\int_{\re} | h (\xi,\bk_{\ell}) | {\rm d} \xi \leq \| u \|_{\cH} \| v \|_{\cH}$.
\end{proof}

\begin{remark}
\label{rmk-lapz}
The result of Proposition \ref{pr-lapz}(a) for $\bk_{\ell} \in \cB_E$ can be recovered from the reasonning developped in \cite{Agmon}[\S 4] (see also \cite{DG}, \cite{wed1} and \cite{wed2}) but this would require that the estimate \eqref{nz0} in  Lemma \ref{lm-nz} be generalized to the case of $\bk_{\ell} \in \cB_{E}^-$. The derivation of this particular result being quite tricky we prefer to apply the above method essentially based on the Plemelj-Privalov theorem, which is a very powerful tool in this framework.

\end{remark}

\subsection{More on $R_0(\bk_{\ell},z )$ for $(\bk_{\ell},z) \in \cB_E^+ \times \C$ such that $| \Pre{z}| < | \bk_{\ell} |^2$}

In view of characterizing the rate of decay of guided states associated to $\bk_{\ell} \in \mathcal{C}_g(E)$ in the direction orthogonal to $x_1$ (see Theorem \ref{thm-sw} and the proof of Lemma \ref{lm-sw3}), we need the following result about the resolvent of $H_0(\bk_{\ell})$ for $\bk_{\ell} \in \cB_E^+$:

\begin{lemma}
\label{lm-nz}
Let $\bk_{\ell} \in \cB_E^+$ and $z \in \C$ be such that $| \Pre{z}| < | \bk_{\ell} |^2$ and $| \Pim{z} | \leq 1$. Then $R_0(\bk_{\ell},z) v \in {\rm H}^{2,\sigma}(\re \times \cS):={\rm H}^{2}(\re \times \cS; (1+x_1^2)^{\sigma \slash 2} {\rm d} \bx)$ for every $v \in \cH_{\sigma}$, $\sigma \geq 0$, and it holds true that
\bel{nz0}
c_1 \| v \|_{\cH_{\sigma}} \leq \| R_0(\bk_{\ell},z ) v \|_{{\rm H}^{2,\sigma}(\re \times \cS)} \leq c_2 \| v \|_{\cH_{\sigma}},
\ee
where $c_1>0$ and $c_2>0$ are two constants independent of $v$ and $\Pim{z}$.
\end{lemma}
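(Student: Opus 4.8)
The plan is to reduce everything to the spectral representation \eqref{hzan5}--\eqref{hzan4}, in which $R_0(\bk_{\ell},z)$ acts fiberwise over $\bK_{\ell}\in\cL^{\perp}$ as multiplication in $\xi$ by $(\xi^2+|\bk_{\ell}+\bK_{\ell}|^2-z)^{-1}$. I would first settle the unweighted case $\sigma=0$. Writing $A_{\bK_{\ell}}:=|\bk_{\ell}+\bK_{\ell}|^2$ and using that $\|(-\Delta+1)w\|_{\cH}$ is an equivalent norm on ${\rm H}^2(\re\times\cS)$ (compatibly with the boundary conditions \eqref{s6}), the $\sigma=0$ statement becomes, via the Parseval identity \eqref{hzan4b}, a two-sided bound on the multiplier
$$ m_{\bK_{\ell}}(\xi):=\frac{\xi^2+A_{\bK_{\ell}}+1}{\xi^2+A_{\bK_{\ell}}-z}=1+\frac{1+z}{\xi^2+A_{\bK_{\ell}}-z}. $$
Since $\bk_{\ell}\in\cB_E^+$ forces $A_{\bK_{\ell}}\ge|\bk_{\ell}|^2>|\Pre{z}|$, one has $\Pre{(\xi^2+A_{\bK_{\ell}}-z)}\ge\xi^2+\gamma$ with $\gamma:=|\bk_{\ell}|^2-\Pre{z}>0$, so the denominator never vanishes and $0<c_1\le|m_{\bK_{\ell}}(\xi)|\le c_2<\infty$ uniformly in $\xi\in\re$, in $\bK_{\ell}\in\cL^{\perp}$, and — crucially — in $\Pim{z}\in[-1,1]$ (because $|1+z|\le 1+|\bk_{\ell}|^2+1$ and $\gamma$ do not depend on $\Pim{z}$). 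This gives $c_1\|v\|_{\cH}\le\|R_0(\bk_{\ell},z)v\|_{{\rm H}^2(\re\times\cS)}\le c_2\|v\|_{\cH}$, and in particular $R_0(\bk_{\ell},z)v\in{\rm H}^{2,0}$.

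For the upper bound when $\sigma>0$ I would propagate the weight $\langle x_1\rangle^{\sigma}:=(1+x_1^2)^{\sigma/2}$ through the resolvent by the commutator identity
$$ \langle x_1\rangle^{\sigma} R_0(\bk_{\ell},z)=R_0(\bk_{\ell},z)\langle x_1\rangle^{\sigma}-R_0(\bk_{\ell},z)\,[H_0(\bk_{\ell}),\langle x_1\rangle^{\sigma}]\,R_0(\bk_{\ell},z), $$
where $[H_0(\bk_{\ell}),\langle x_1\rangle^{\sigma}]=-(\partial_{x_1}^2\langle x_1\rangle^{\sigma})-2(\partial_{x_1}\langle x_1\rangle^{\sigma})\partial_{x_1}$ is first order with coefficients $O(\langle x_1\rangle^{\sigma-1})$. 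Applying the $\sigma=0$ bound to both occurrences of $R_0(\bk_{\ell},z)$ yields
$$ \|\langle x_1\rangle^{\sigma} R_0(\bk_{\ell},z)v\|_{{\rm H}^2}\le c_2\|\langle x_1\rangle^{\sigma} v\|_{\cH}+C\|\langle x_1\rangle^{\sigma-1}R_0(\bk_{\ell},z)v\|_{{\rm H}^1}. $$
Since $\|\langle x_1\rangle^{\tau}w\|_{{\rm H}^2}$ is equivalent to $\|w\|_{{\rm H}^{2,\tau}}$ (Leibniz, together with $|\partial_{x_1}^{j}\langle x_1\rangle^{\tau}|\le C\langle x_1\rangle^{\tau}$), the last term is controlled at weight $\sigma-1$, and an induction on $\lceil\sigma\rceil$ closes the estimate: the base case $\sigma\in[0,1)$ uses $\langle x_1\rangle^{\sigma-1}\le 1$ and the $\sigma=0$ bound, while the inductive step invokes the bound at weight $\sigma-1$ (and $\langle x_1\rangle^{\sigma-1}\le\langle x_1\rangle^{\sigma}$). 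Because $\langle x_1\rangle^{\sigma}$ is unbounded, I would make these manipulations rigorous by replacing $\langle x_1\rangle^{\sigma}$ with the bounded truncations $\langle x_1\rangle^{\sigma}\wedge N$, deriving the estimates with constants independent of $N$, and letting $N\to\infty$; the a priori membership $R_0(\bk_{\ell},z)v\in{\rm H}^2$ from the $\sigma=0$ step guarantees all quantities are finite.

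The lower bound for $\sigma>0$ then comes essentially for free from $v=(H_0(\bk_{\ell})-z)R_0(\bk_{\ell},z)v$: the operator $H_0(\bk_{\ell})-z$ maps ${\rm H}^{2,\sigma}(\re\times\cS)$ into $\cH_{\sigma}$ with norm bounded independently of $\Pim{z}$, since $\|H_0(\bk_{\ell})w\|_{\cH_{\sigma}}\le C\|w\|_{{\rm H}^{2,\sigma}}$ by the same Leibniz bookkeeping and $|z|<|\bk_{\ell}|^2+1$; hence $\|v\|_{\cH_{\sigma}}\le C\|R_0(\bk_{\ell},z)v\|_{{\rm H}^{2,\sigma}}$. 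Every constant arising depends only on $|\bk_{\ell}|$, on $\gamma=|\bk_{\ell}|^2-\Pre{z}$, and on $\sigma$, hence is independent of $v$ and of $\Pim{z}$, as required. The spectral bounds on $m_{\bK_{\ell}}$ and the algebraic Leibniz estimates are routine; I expect the genuinely delicate point to be the \emph{uniform} justification of the commutator/weight manipulations, i.e. obtaining constants that are simultaneously independent of the truncation parameter $N$ and of $\Pim{z}$.
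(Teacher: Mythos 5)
Your proof is correct, but it takes a different route from the paper's, and the comparison is instructive. The paper proves the unweighted two-sided bound abstractly: from $\sigma(H_0(\bk_{\ell}))=[|\bk_{\ell}|^2,+\infty)$ it gets $|\bk_{\ell}|\,\|u\|_{\cH}\leq\|\nabla u\|_{\cH}\leq|\bk_{\ell}|^{-1}\|\Delta u\|_{\cH}$, then with the choice $\varepsilon=(|\bk_{\ell}|^2-|\Pre{z}|)/(1+|\bk_{\ell}|^2)$ and the triangle inequality it deduces $C_1\|u\|_{{\rm H}^2}\leq\|(H_0(\bk_{\ell})-z)u\|_{\cH}\leq C_2\|u\|_{{\rm H}^2}$ (your Fourier-multiplier bound on $m_{\bK_{\ell}}$ is the fiberwise version of exactly this statement, and the discarding of $\Pim{z}$ via $\|(A-\Pre{z}-{\rm i}\Pim{z})u\|\geq\|(A-\Pre{z})u\|$ plays the role of your real-part lower bound on the denominator). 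For the weights, the paper then conjugates directly: it notes that $\|u\|_{{\rm H}^{2,\sigma}}$ is equivalent to $\|(H_0(\bk_{\ell})-z)(1+x_1^2)^{\sigma/2}u\|_{\cH}$, asserts that the latter is ``easily seen to be equivalent'' to $\|(H_0(\bk_{\ell})-z)u\|_{\cH_{\sigma}}$, and takes $u=R_0(\bk_{\ell},z)v$. That asserted equivalence is precisely the absorption of the commutator $[H_0(\bk_{\ell}),(1+x_1^2)^{\sigma/2}]$, i.e. the very thing you prove in detail via the resolvent commutator identity, the induction on $\lceil\sigma\rceil$, and the truncation argument; so your write-up supplies rigor where the paper hand-waves, at the cost of heavier machinery, while the paper's conjugation trick gives both directions of \eqref{nz0} in one stroke (your lower bound via boundedness of $(H_0(\bk_{\ell})-z):{\rm H}^{2,\sigma}\to\cH_{\sigma}$ is an equivalent, equally clean alternative). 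Two minor points to fix in your version: the commutator identity should read $\langle x_1\rangle^{\sigma}R_0=R_0\langle x_1\rangle^{\sigma}+R_0[H_0,\langle x_1\rangle^{\sigma}]R_0$ (your sign is off, though this is harmless since you only use norms), and the truncation $\langle x_1\rangle^{\sigma}\wedge N$ is not ${\rm C}^2$ at the matching point, so you should use a smooth truncation such as $\langle x_1\rangle^{\sigma}(1+\langle x_1\rangle^{\sigma}/N)^{-1}$, whose first two derivatives are dominated, uniformly in $N$, by those of $\langle x_1\rangle^{\sigma}$.
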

\begin{proof}
Bearing in mind that the domain of $H_0(\bk_{\ell})$ is a subset of ${\rm H}^{2}(\re \times \cS)$ according to \eqref{s6b} and that  $\sigma(H_0(\bk_{\ell}))=[| \bk_{\ell}|^2,+\infty)$, we may write
$$ | \bk_{\ell} |^2 \| u \|_{\cH}^2 \leq \langle H_0(\bk_{\ell}) u , u \rangle_{\cH} =  -\langle \Delta u , u \rangle_{\cH} = \| \nabla u \|_{\cH}^2 \leq \| \Delta u \|_{\cH} \| u \|_{\cH}, $$
for every $u \in {\rm dom}\ H_0(\bk_{\ell})$, from where we get that
\bel{nz1}
| \bk_{\ell} | \| u \|_{\cH} \leq \| \nabla u \|_{\cH} \leq | \bk_{\ell} |^{-1}  \| \Delta u \|_{\cH}.
\ee
Next, setting $\varepsilon:=(| \bk_{\ell}|^2- | \Pre{z}|) \slash (1 + | \bk_{\ell} |^2) >0$, in such a way that we have $\varepsilon = (1-\varepsilon) | \bk_{\ell}|^2 -  |\Pre{z}|$, and then noticing from \eqref{nz1} that
\beas
 \| (H_0(\bk_{\ell}) - z) u \|_{\cH} & \geq & \| (H_0(\bk_{\ell}) -\Pre{z}) u \|_{\cH} \\
& \geq &  \varepsilon \| \Delta u \|_{\cH} + (1-\varepsilon) \| \Delta u \|_{\cH} - | \Pre{z} | \| u \|_{\cH} \\
& \geq &  \varepsilon \| \Delta u \|_{\cH} + ((1-\varepsilon) | \bk_{\ell}|^2 - | \Pre{z} | ) \| u \|_{\cH},
\eeas
we find that
$ \| (H_0(\bk_{\ell}) - z) u \|_{\cH} \geq \varepsilon (  \| \Delta u \|_{\cH} + \| u \|_{\cH} )$. This entails
\bel{nz2}
C_1 \| u \|_{{\rm H}^{2}(\re \times \cS)} \leq \| (H_0(\bk_{\ell}) - z ) u \|_{\cH} \leq C_2 \| u \|_{{\rm H}^{2}(\re \times \cS)},\ u \in {\rm dom}\ H_0(\bk_{\ell}), 
\ee
for some constants $C_1 >0$ and $C_2>0$ depending only on $|\bk_{\ell}|$ and $| \Pre{z}|$. Further, since 
$ (1+ x_1^2)^{\sigma \slash 2} u \in {\rm dom}\ H_0(\bk_{\ell}) $ for all $u \in {\rm dom}\ H_0(\bk_{\ell})$, and $\| u \|_{{\rm H}^{2,\sigma}(\re \times \cS)}$ is equivalent to $\|  (1+ x_1^2)^{\sigma \slash 2} u \|_{{\rm H}^{2}(\re \times \cS)}$, it follows from \eqref{nz2} that $\| u \|_{{\rm H}^{2,\sigma}(\re \times \cS)}$ is equivalent to $\| (H_0(\bk_{\ell}) - z ) (1+ x_1^2)^{\sigma \slash 2} u \|_{\cH}$. As $\| (H_0(\bk_{\ell}) - z ) (1+ x_1^2)^{\sigma \slash 2} u \|_{\cH}$ is easily seen to be
equivalent to $\| (H_0(\bk_{\ell}) - z ) u \|_{\cH_{\sigma}}$, we finally obtain the result from this by taking $u= R_0(\bk_{\ell},z) v$. 
\end{proof}

We now conclude Appendix B with the two following comments.
\begin{enumerate}[(a)]
\item It is not hard to see that a LAP for $H_0=\int_{\bk_{\ell} \in \cB} H_0(\bk_{\ell})$, similar to the one stated in \cite{Agmon}, can be obtained from Proposition \ref{pr-lapz} and Lemma \ref{lm-nz}, by ``integrating" the above results w.r.t. $\bk_{\ell}$ over $\cB$. The technical difficulty arising in this method when $| \bk_{\ell} | = E^{1 \slash 2}$ is easily overcomed by the change of
the integration variables $(\xi,\bk_{\ell})$ into spherical coordinates, in a neighborhood of the sphere $S(0,E^{1 \slash 2}) \subset \mathbb{R}\times\cS$.
\item Furthermore, the results of Theorem \ref{thm-a} allow for the derivation of a LAP for the perturbed operator $H$, under slightly less rectrictive conditions than in \cite{filonov1}-\cite{filErratum} or \cite{Ger}. Nevertheless, as already mentioned in the introduction, the proof of this result is left to the reader in order to avoid the inadequate expense of the size of this paper. 
\end{enumerate}

\noindent {\bf Acknowledgement}\\
\noindent The authors are grateful to S. Rigat (Universit\'e d'Aix-Marseille) for fruitful discussions on weak analyticity.


\begin{thebibliography} {[10]}
\frenchspacing \baselineskip=12 pt plus 1pt minus 1pt

\bibitem{Agmon} Agmon, S. (1975). Spectral properties of Schr\"odinger operators and scattering theory. {\it Ann. Scuola Norm. Sup. Pisa, Cl. Sci.} 4, no. 2: 151-218.

\bibitem{auld} Auld, B. A. (1990). {\it Acoustic fields and waves in solids, volume II}. Malabar, Florida:  Robert E. Krieger Publishing Company.

\bibitem{bento1} Bentosela, F., Briet, P., Pastur, L. (2003). On the spectral and wave propagation properties of the surface Maryland model. {\it J. Math. Phys.} 44: 1--35.

\bibitem{bento2} Bentosela, F., Tater, M. (2002). Scattering by a slab : An exact calculation. Mathematical Results in Quantum Mechanics. {\it Cont. Math.} 307: 15--20.

\bibitem{bento3} Bentosela, F. (2004). Scattering and spectral theory of two surface models. Spectral theory of Schr\"{o}dinger operators. {Cont. Math.} 340:  217--249.
 
\bibitem{bourrely} Bourrely, C. (2008). Diffusion d'un champ \'electromagn\'etique par un b\^atiment mod\'elis\'e. {\it ccsd-hal}-00273714.


\bibitem{CD2} Croc, E., Dermenjian, Y. (1996). Spectral analysis of a multistratified acoustic strip. Part II : asymptotic behavior of solutions for a simple stratification. {\it SIAM J. math. Anal.} 27, no. 6: 1631-1652.

\bibitem{DG} Dermenjian, Y., Guillot, J.-C. (1986). Th\'eorie spectrale de la propagation des ondes acoustiques dans un milieu stratifi\'e perturb\'e. {\it J. Diff. Equ.} 62, no 3: 357--409.

\bibitem{frank1} Frank, R. L. (2003). On the scattering theory of the Laplacian with periodic boundary condition. I: Existence of wave operators. {\it Documenta Math.} 8: 547--565.

\bibitem{frank2} Frank, R. L., Shterenberg, R. G. (2004). Scattering theory of the Laplacian with periodic boundary condition. II: Additional channels of scattering. {\it Documenta Math.} 9: 57--77.

\bibitem{filonov1} Filonov, N., Klopp, F. (2004). Absolute continuity of the spectrum of a Schr\"odinger operator with a potential which is periodic in some directions and decays in others. {\it Documenta Math.} 9: 107--121, 

\bibitem{filErratum} Filonov, N., Klopp, F. (2004). Erratum to the paper ``Absolute continuity of the spectrum of a Schr\"odinger operator with a potential which is periodic in some directions and decays in others". {\it Documenta Math.} 9: 135--136.

\bibitem{filonov2} Filonov, N., Klopp, F. (2005). Absolute continuity of the spectrum for the isotropic Maxwell operator with coefficients that are periodic in some directions and decay in others. {\it Comm. Math. Phys.} 258: 75--85.

\bibitem{Ger} G\'erard, C. (1989). Resonance theory in atom-surface scattering. {\it Comm. Math. Phys.} 126, no. 2: 263--290.


\bibitem{H} H\"ormander, L. (1990) {\it An introduction to complex analysis in several variables}. Amsterdam: 3rd revised edition, North Holland Math. library, vol. 7.


\bibitem{K} Kato, T. (1966). {\it Perturbation Theory for Linear Operators}.
New York: Die Grundlehren der mathematischen Wissenschaften 132, Springer-Verlag.

\bibitem{Kuc} Kuchment, P., Vainberg, B. (2000). On absence of embedded eigenvalues for Schr\"odinger operators with perturbed periodic potentials.  {\it Comm. Partial Diff. Equ.} 25, no. 9-10: 1809--1826.

\bibitem{Mu} Muskhelishvili, N. I. (1992). {\it Singular integral equations boundary problems of function theory and their applications to mathematical physics}. Dover Publications. 
    




\bibitem{Vas} Vasilescu, F. H. (1984). {\it Analytic Functional Calculus and Spectral Decompositions}. Bucharest \& Dordrecht: Editura Academiei and D. Reidel Publishing.

\bibitem{wilcox} Wilcox, C. H. (1984). {\it Scattering theory for diffraction gratings}. New York: Applied Math. Sciences 46, Springer-Verlag.

\bibitem{wilcox2} Wilcox, C. H. (1984). {\it Sound propagation in stratified media}. New York: Applied Math. Sciences 50, Springer-Verlag.

\bibitem{wed1} Weder, R. (1985). Spectral and scattering theory in perturbed stratified fluids. {\it J. Math. pures et appl.} 64: 149--173.

\bibitem{wed2} Weder, R. (1991). {\it Spectral and scattering theory for wave propagation in perturbed stratified media}. New York: Applied Mathematical Sciences 87, Springer-Verlag.

\end{thebibliography}
\end{document}